\DeclareMathOperator{\Pic}{Pic}
\DeclareMathOperator{\Aut}{Aut}
\DeclareMathOperator{\pr}{pr}
\DeclareMathOperator{\PGL}{PGL}
\DeclareMathOperator{\SU}{SU}
\DeclareMathOperator{\GL}{GL}
\DeclareMathOperator{\SL}{SL}
\DeclareMathOperator{\SO}{SO}
\DeclareMathOperator{\OO}{O}
\DeclareMathOperator{\Mov}{Mov}
\DeclareMathOperator{\Amp}{Amp}
\DeclareMathOperator{\Bir}{Bir}
\DeclareMathOperator{\J}{J}
\DeclareMathOperator{\B}{B}
\DeclareMathOperator{\Cone}{\mathscr C}
\DeclareMathOperator{\Tits}{\mathscr T}
\DeclareMathOperator{\DTits}{\mathscr D}
\DeclareMathOperator{\Hilb}{Hilb}
\DeclareMathOperator{\Diff}{Diff}
\DeclareMathOperator{\NS}{NS}
\DeclareMathOperator{\Isom}{Isom}
\DeclareMathOperator{\Stab}{Stab}
\DeclareMathOperator{\Mob}{M\"{o}b}
\DeclareMathOperator{\UC}{UC}
\DeclareMathOperator{\Symmetric}{Sym}
\DeclareMathOperator{\dist}{dist}
\DeclareMathOperator{\Vect}{\mathscr V}
\DeclareMathOperator{\Min}{Min}
\DeclareMathOperator{\Gr}{Gr}
\DeclareMathOperator{\Kah}{Kah}
\DeclareMathOperator{\Pos}{Pos}
\DeclareMathOperator{\Nef}{Nef}
\theoremstyle{plain}
\newtheorem*{theoremA}{Theorem A}
\newtheorem*{theoremB}{Theorem B}
\newtheorem*{theoremC}{Theorem C}
\newtheorem*{theorem*}{Theorem}
\newtheorem*{conditional}{Conditional Theorem}
\newtheorem{thm}{Theorem}[section]
\newtheorem{lem}[thm]{Lemma} 
\newtheorem{sled}[thm]{Corollary} 
\newtheorem{prop}[thm]{Proposition} 
\newtheorem{quest}[thm]{Question} 
\theoremstyle{definition}
\newtheorem{mydef}[thm]{Definition} 
\newtheorem{rem}[thm]{Remark}
\newtheorem{ex}[thm]{Example}
\g@addto@macro{\endabstract}{\@setabstract}
\newcommand{\authorfootnotes}{\renewcommand\thefootnote{\@fnsymbol\c@footnote}}%
\begin{document}
	
	\begin{center}
		\LARGE 
		Automorphisms of hyperk\"{a}hler manifolds\\ and groups acting on CAT(0) spaces \par \bigskip
		
		\normalsize
		\authorfootnotes
		\large Nikon Kurnosov and Egor Yasinsky
        \footnote{The first author was supported within the framework of a subsidy granted to the HSE by the Government of the Russian Federation for the implementation of the Global Competitiveness Program. The second author acknowledges support by the Swiss National Science Foundation Grant ``Birational
        	transformations of threefolds'' 200020\_178807. Both authors are supported by RFBR grant 18-31-00146$\setminus$18.}

		
	\end{center}

\newcommand{\CC}{\mathbb C} 
\newcommand{\KK}{\mathbb K} 
\newcommand{\HH}{\mathbb H}
\newcommand{\DD}{\mathbb B} 
\newcommand{\FF}{\mathbb F}
\newcommand{\QQ}{\mathbb Q}
\newcommand{\RR}{\mathbb R}
\newcommand{\ZZ}{\mathbb Z}
\newcommand{\PP}{\mathbb P}
\newcommand{\kk}{\mathbb{k}}
\newcommand{\Sph}{\mathbb S}
\newcommand{\Torus}{\mathbb T}
\newcommand{\RP}{\mathbb{RP}}
\newcommand{\Sym}{\mathfrak S}
\newcommand{\Alt}{\mathfrak A}
\newcommand{\Dih}{\mathrm D}
\newcommand{\XC}{X_{\mathbb C}}
\newcommand{\Weyl}{\mathscr W}
\newcommand{\id}{\mathrm{id}}
\newcommand{\DP}{\mathcal{D}}
\newcommand{\CAT}{\mathrm D}
\newcommand{\GGG}{\mathcal G}
\newcommand{\Quad}{Q}
\newcommand{\Limits}{\mathrm C}
\newcommand{\OOO}{\mathscr O}
\newcommand{\ball}{\mathbb B}
\newcommand{\EE}{\mathbb E}
\newcommand{\UU}{\mathbb U}
\newcommand{\BB}{\mathbb B}
\newcommand{\qform}{\mathfrak q}
\newcommand{\XX}{\mathcal X}
\newcommand{\Hyp}{\mathrm{Hyp}}
\newcommand{\Per}{\mathrm{Per}}
\newcommand{\Teich}{\mathrm{Teich}}
\newcommand{\Monodromy}{\mathrm{Mon}}

	\newcommand*\conj[1]{\overline{#1}}
	
	\begin{abstract}
	We study groups of biholomorphic and bimeromorphic automorphisms of projective hyperk\"{a}hler manifolds. Using an action of these groups on some non-positively curved space, we immediately deduce many of their properties, including finite presentation, strong form of Tits' alternative, and some structural results about groups consisting of transformations with infinite order.
       
	\end{abstract}


\section{Introduction}

The purpose of this paper is to prove some boundedness results on biholomorphic and bimeromorphic automorphism groups of hyperk\"{a}hler manifolds using geometric group theory. Some of these facts should be known to experts, but their proofs (sometimes quite recent) have different nature. Our goal is to put them into the context of groups acting geometrically on CAT(0) spaces, and to explain some new and old statements from that point of view. The advantage of this approach is that a group $\Gamma$ acting on a CAT(0) space <<nicely>> (properly and cocompactly by isometries) automatically has a lot of good properties which are classically known to people doing metric geometry. For example, a consequence of \v{S}varc-Milnor lemma states that $\Gamma$ is finitely presented in this case. For more properties see Section \ref{section: metric}.


Throughout this note, we work over the complex number field $\CC$. By a {\it hyperk\"{a}hler manifold} we mean a compact simply-connected complex K\"{a}hler manifold $M$ having everywhere non-degenerate holomorphic 2-form $\omega_M$ such that $H^0(M,\Omega_M^2)=\CC\omega_M$. These manifolds play a very important role in classification of compact K\"{a}hler manifolds with vanishing first Chern class. The known examples include K3 surfaces, the Hilbert schemes $\Hilb^n(S)$ of 0-dimensional closed subschemes of length $n$ of a K3 surface $S$, generalized Kummer varieties, i.e. the kernels of the composition 
\[
\Hilb^{n}(T)\to\Symmetric^n{T}\overset{s}{\to} T,
\]
where $T$ is a complex torus and $s$ is the sum morphism, and O'Grady's two sporadic examples of dimension 6 and 10.

Let $M$ be a hyperk\"{a}hler manifold. In present note we are interested in groups of its biholomorphic and bimeromorphic automorphisms, $\Aut(M)$ and $\Bir(M)$ respectively. Recall that a classical Tits' alternative states that any finitely generated linear algebraic group over a field is either virtually solvable (i.e. has a solvable subgroup of finite index), or contains a non-abelian free group. Following \cite{Og} let us say that a group $G$ is {\it almost abelian of finite rank $r$} if there are a normal subgroup $G'\lhd G$ of finite index and a finite group $K$ which fit in the exact sequence 
\[
\id\to K\to G'\to\ZZ^r\to 0.
\]
Then one has the following analogue of Tits' alternative for hyperk\"{a}hler manifolds:
\begin{thm}[{\cite[Theorem 1.1]{Og}}]\label{thm: Oguiso Tits}
	Let $M$ be a projective hyperk\"{a}hler manifold and $G$ be a subgroup of $\Bir(M)$. Then $G$ satisfies either:
	\begin{enumerate}
		\item $G$ is an almost abelian group of finite rank, or
		\item $G$ contains a non-abelian free group.
	\end{enumerate}
\end{thm}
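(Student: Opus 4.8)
The plan is to realize the group $G \subseteq \Bir(M)$ as a group of isometries of a natural non-positively curved space, namely a hyperbolic space associated to the second cohomology of $M$, and then to invoke the theory of isometry groups of such spaces together with finiteness properties coming from the integral structure.
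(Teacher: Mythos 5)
Your one-sentence plan points in the right general direction --- the paper does ultimately derive the Tits alternative from an action on a non-positively curved space built out of $\NS(M)_\RR$ with the Beauville--Bogomolov--Fujiki form --- but as written it is a statement of intent, not a proof, and the two steps it gestures at are precisely where all the work lies. First, the action of $\Bir(M)$ on the hyperbolic space attached to $(\NS(M)_\RR, q_{BBF})$ is \emph{not} by itself geometric: it is proper but very far from cocompact, and no general ``theory of isometry groups of hyperbolic space'' yields the dichotomy for an arbitrary discrete subgroup. The paper has to restrict to $\Mov(M)^+$ intersected with the convex hull of the limit set, invoke the Kawamata--Morrison cone conjecture (Markman, Amerik--Verbitsky) to get a rational polyhedral fundamental domain, and then delete an equivariant family of horoballs to produce a genuine CAT(0) space on which $\Bir(M)$ acts properly and cocompactly. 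Without the cone conjecture input, your plan has no mechanism for producing cocompactness, and without cocompactness the structural results you want to ``invoke'' are simply not available.

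Second, even granting a geometric action, you have not said how the dichotomy is actually obtained. The paper's argument runs: apply the classical Tits alternative to the linear image $G^*=\Phi_{\NS}(G)\subset\GL(\NS(M)\otimes\RR)$; if $G^*$ has no free subgroup it is virtually solvable; pull back along the finite-kernel homomorphism $\Phi_{\NS}$ and pass to the centralizer of the kernel to get a solvable subgroup of finite index in $G$; then use the CAT(0) group properties that every solvable subgroup is virtually abelian and every abelian subgroup is finitely generated to land in case (1). Each of these ingredients --- the finiteness of $\ker\Phi_{\NS}$ (Oguiso's Proposition 2.4 in \cite{OgPic2}), the classical Tits alternative for the linear quotient, and the flat-torus-type structure theory for solvable subgroups of CAT(0) groups --- is essential and none is supplied or even named by your proposal. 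You should either carry out this chain explicitly or follow Oguiso's original route (Lie--Kolchin plus Salem polynomial arguments for virtually solvable subgroups of $\OO(L)$ with $L$ a hyperbolic lattice); at present the proposal contains neither.
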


In the non-projective case the situation is even simpler:

\begin{thm}[{\cite[Theorem 1.5]{Og1}}]\label{thm: non-projective}
	Let $M$ be a non-projective hyperk\"{a}hler manifold. Then the group $\Bir(M)$ (and hence $\Aut(M)$) is almost abelian of rank at most $\max(\rho(M)-1,1)$. In particular, these groups are finitely generated.
\end{thm}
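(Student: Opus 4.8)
The plan is to linearise the problem by letting $\Bir(M)$ act on the second cohomology lattice $(H^2(M,\ZZ),q)$, where $q$ is the Beauville--Bogomolov--Fujiki form, and then to exploit the very special signature of the N\'{e}ron--Severi lattice that the non-projectivity hypothesis forces. First I would consider the natural representation $\eta\colon\Bir(M)\to\OO(H^2(M,\ZZ),q)$ afforded by the action on cohomology. Its kernel is finite (the numerically trivial bimeromorphic transformations form a finite group), so it suffices to control the image $\Gamma:=\eta(\Bir(M))$ up to finite index. Since every element of $\Gamma$ respects the Hodge decomposition, and in particular fixes the line $H^{2,0}(M)=\CC\omega_M$, it preserves both the N\'{e}ron--Severi lattice $\NS(M)=H^{1,1}(M)\cap H^2(M,\ZZ)$ and its orthogonal complement, the transcendental lattice $T(M)$. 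As $\NS(M)\oplus T(M)$ has finite index in $H^2(M,\ZZ)$ and $\Gamma$ acts faithfully, $\Gamma$ embeds into $\OO(\NS(M))\times\OO(T(M))$.

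The two factors are then handled separately. On $T(M)$ the relevant image is finite cyclic: an element acting trivially on $\NS(M)$ scales $\omega_M$ by a root of unity, and the integrality of the action bounds the degree of the field generated by such roots of unity, forcing finiteness; thus the kernel $\Gamma_0$ of $\Gamma\to\OO(\NS(M))$ is finite cyclic. It remains to bound $\OO(\NS(M))$ itself. Here I would invoke Huybrechts' projectivity criterion: a hyperk\"{a}hler manifold is projective if and only if $\NS(M)$ contains a class of positive square for $q$. Since $M$ is non-projective and $q$ has signature $(1,h^{1,1}(M)-1)$ on $H^{1,1}(M)$, this forces $q|_{\NS(M)}$ to be negative semidefinite, with radical $R=\NS(M)\cap\NS(M)^\perp$ of rank at most $1$ (an isotropic subspace of a form of signature $(1,n)$ has dimension $\le 1$).

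A lattice-theoretic analysis then finishes the argument. If $\NS(M)$ is negative definite, its orthogonal group is finite and $\Bir(M)$ is finite, i.e. almost abelian of rank $0$. If instead $R\cong\ZZ$ is one-dimensional, then every isometry preserves the canonical line $R$ and induces finite-order actions on both $R\cong\ZZ$ (via $\pm 1$) and the negative definite quotient $\NS(M)/R$ (of rank $\rho(M)-1$, whose orthogonal group is finite). The subgroup acting trivially on both consists exactly of the transvections $x\mapsto x+\varphi(x)\,r$ with $\varphi\in\mathrm{Hom}(\NS(M)/R,\ZZ)\cong\ZZ^{\rho(M)-1}$; each such map is automatically an isometry because $r$ is isotropic and orthogonal to everything, and the composition law shows this subgroup is free abelian of rank $\rho(M)-1$. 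Hence one obtains an exact sequence $1\to\ZZ^{\rho(M)-1}\to\OO(\NS(M))\to(\text{finite})\to 1$, so $\OO(\NS(M))$ is almost abelian of rank $\rho(M)-1$.

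Assembling the finite extension $1\to\ker\eta\to\Bir(M)\to\Gamma\to 1$ and the finite cyclic extension $1\to\Gamma_0\to\Gamma\to\Gamma/\Gamma_0\subseteq\OO(\NS(M))\to 1$, and using that almost abelianness of a fixed rank is stable under finite extensions on either side, shows that $\Bir(M)$ is almost abelian of rank at most $\max(\rho(M)-1,1)$; finite generation is then a formal consequence, since any almost abelian group is finitely generated. I expect the main subtlety to lie not in the lattice bookkeeping, which is elementary once the signature is pinned down, but in citing correctly the two deeper inputs --- the finiteness of the numerically trivial subgroup together with the finite cyclic action on $T(M)$, and Huybrechts' projectivity criterion --- as these are precisely the hyperk\"{a}hler-specific facts that make the whole scheme work.
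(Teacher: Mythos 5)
A preliminary remark: the paper offers no proof of this statement --- it is imported verbatim from Oguiso's note \cite{Og1} --- so your argument can only be judged on its own terms. It contains a genuine gap, and the gap sits exactly at the phenomenon that motivated \cite{Og1} (``a note inspired by C.~T.~McMullen''). The false step is the claim that the kernel $\Gamma_0$ of $\Gamma\to\OO(\NS(M))$ is finite because an element acting trivially on $\NS(M)$ ``scales $\omega_M$ by a root of unity.'' That reasoning is correct for \emph{projective} $M$: there $T(M)\otimes\RR$ has signature $(2,\ast)$, its positive part is spanned by $\re\omega_M$ and $\operatorname{Im}\omega_M$, so $\Gamma_0$ preserves an orthogonal decomposition into definite pieces and is a discrete subgroup of a compact group. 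In the non-projective case --- the only case you are treating --- $\NS(M)\otimes\RR$ carries no positive direction, so the third positive direction of the BBF form lives on the transcendental side; the orthogonal complement of $\langle\re\omega_M,\operatorname{Im}\omega_M\rangle$ in $T(M)\otimes\RR$ is then hyperbolic of signature $(1,\ast)$, and $\Gamma_0$ can contain a loxodromic isometry of infinite order. McMullen's non-projective K3 surfaces with Siegel discs realize this: $\rho(M)=0$, $T(M)=H^2(M,\ZZ)$, and $\Aut(M)$ contains an automorphism whose action on $H^2$ has a degree-$22$ Salem polynomial as characteristic polynomial; the scalar by which it multiplies $\omega_M$ has modulus one but is \emph{not} a root of unity. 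Your proof would make $\Bir(M)$ finite for these surfaces, whereas it contains $\ZZ$; the term $\max(\rho(M)-1,1)$ in the statement exists precisely to accommodate this infinite cyclic transcendental factor, which your argument deletes. Repairing this is the real content of \cite{Og1}: one must show the transcendental image is almost abelian of rank at most $1$, and then that the rank-$(\rho(M)-1)$ unipotent part of $\OO(\NS(M))$ and this rank-$1$ part cannot combine to exceed the stated bound.

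Two smaller points. When $\NS(M)$ is parabolic its radical $R$ lies in $\NS(M)\cap T(M)$, so $\NS(M)\oplus T(M)$ is \emph{not} a finite-index sublattice of $H^2(M,\ZZ)$ and $\Gamma$ need not inject into $\OO(\NS(M))\times\OO(T(M))$; this requires a separate (easy, but nonvacuous) fix. On the positive side, your analysis of $\OO(\NS(M))$ itself --- the elliptic/parabolic dichotomy via Huybrechts' projectivity criterion, and the identification of a finite-index subgroup with the transvection group $\{x\mapsto x+\varphi(x)r:\ \varphi\in\operatorname{Hom}(\NS(M)/R,\ZZ)\}\cong\ZZ^{\rho(M)-1}$ --- is correct, and is indeed one half of what a complete proof needs.
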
 

In \cite{Og1} K. Oguiso also asked (Question 1.5) if the groups $\Bir(M)$ and $\Aut(M)$ are finitely generated for {\it projective} hyperk\"{a}hler manifolds. Using Global Torelli Theorem, S. Boissi\`ere and A. Sarti proved that $\Bir(M)$ is finitely generated. This does not imply that $\Aut (M)$ is finitely generated since $\Aut (M)$ is not necessarily of finite index in $\Bir (M)$. The question of finite generation of $\Aut(M)$ remained open until the recent preprint of Cattaneo and Fu \cite{Fu}, where the authors were able to give an affirmative answer to Oguiso's question. 

\begin{thm}[{\cite[Theorem 1.5]{Fu}}]\label{thm: Catanneo}
	Let $M$ be a projective hyperk\"{a}hler manifold. Then the group $\Aut(M)$ is finitely presented.
\end{thm}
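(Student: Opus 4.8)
The plan is to realize $\Aut(M)$ as a group acting geometrically on a piece of real hyperbolic space and then to invoke the \v{S}varc--Milnor lemma. First I would pass to the linear representation: $\Aut(M)$ acts on $H^2(M,\ZZ)$ preserving the Beauville--Bogomolov--Fujiki form $\qform$, and the kernel of this action is finite (a well-known consequence of Hodge theory together with Fujiki's results). Since a finite-by-(finitely presented) group is finitely presented, it suffices to prove finite presentation of the image $\Gamma$ of $\Aut(M)$ in $\OO(\NS(M))$. Restricted to $\NS(M)\otimes\RR$ the form $\qform$ has signature $(1,\rho(M)-1)$, so the projectivization of the positive cone $\Cone$ is a copy of hyperbolic space $\HH^{\rho(M)-1}$, a complete $\mathrm{CAT}(0)$ (indeed $\mathrm{CAT}(-1)$) space on which $\Gamma$ acts by isometries. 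Because $\Gamma$ is contained in the arithmetic group $\OO(\NS(M)_{\ZZ})$ it is discrete in $\OO(1,\rho(M)-1)$, and hence acts properly discontinuously with finite point stabilizers.

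The ingredient needed to upgrade this to a geometric action is cocompactness, and this is supplied by the Morrison--Kawamata cone conjecture, which is a theorem for projective hyperk\"{a}hler manifolds. It provides a rational polyhedral fundamental domain $\Pi$ for the action of $\Gamma$ on the ample cone $\Amp(M)$. Projectivizing, the ample cone determines a convex $\Gamma$-invariant subset $Y\subseteq\HH^{\rho(M)-1}$ -- convex, hence itself $\mathrm{CAT}(0)$ -- and $\Pi$ projects to a \emph{finite-sided} convex polytope $P$ whose $\Gamma$-translates tile $Y$. If $P$ were compact in $\HH^{\rho(M)-1}$, the action of $\Gamma$ on $Y$ would be proper and cocompact, and finite presentation would follow at once from \v{S}varc--Milnor.

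The main obstacle is precisely that $P$ need not be compact: its extremal rays may meet the boundary $\partial\HH^{\rho(M)-1}$ at isotropic classes with $\qform=0$, which correspond to Lagrangian fibrations of $M$ and produce cusps, so that $Y/\Gamma$ is in general non-compact. I would resolve this in one of two equivalent ways. The soft route is to remove $\Gamma$-invariant families of open horoballs around the (finitely many, modulo $\Gamma$) parabolic fixed points to obtain a neutered subspace $Y^{\circ}\subset Y$ on which $\Gamma$ \emph{does} act properly and cocompactly; combined with finite presentation of the peripheral parabolic stabilizers -- the automorphism groups preserving a Lagrangian fibration, which are virtually abelian and hence finitely presented -- this yields finite presentation of $\Gamma$. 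The concrete route is to apply Poincar\'e's fundamental polyhedron theorem directly to the tessellation of $Y$ by the translates of $P$: since $P$ has only finitely many facets, the face-pairing isometries form a finite generating set, and the finitely many codimension-two ridge cycles give a finite set of defining relations, the theorem remaining valid in the presence of ideal vertices. Either way $\Gamma$, and therefore $\Aut(M)$, is finitely presented. The delicate point to verify carefully is the cusp analysis, namely that the boundary classes contributing ideal vertices of $P$ fall into finitely many $\Gamma$-orbits and have finitely presented stabilizers.
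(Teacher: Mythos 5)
Your strategy is the one the paper actually follows (Section \ref{sec: construction}): replace $\Aut(M)$ by its image $\Gamma$ in $\OO(\NS(M))$ using the finiteness of $\ker\Phi_{\NS}$, view the projectivized positive cone as hyperbolic space, feed in the Kawamata--Morrison fundamental domain, and neuter the resulting convex set by deleting horoballs so that \v{S}varc--Milnor applies. So this is not a different route; the question is whether the cocompactness step goes through as you state it, and there it has a genuine gap.

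The gap is that you propose to delete horoballs from $Y$ (the projectivized ample cone) itself and claim $\Gamma$ acts cocompactly on the neutered space $Y^{\circ}$. This is false in general: the closure of the finite-sided fundamental polytope $P$ can meet the sphere at infinity at ideal points that are \emph{not} fixed points of parabolics --- indeed not limit points of $\Gamma$ at all (already for a K3 surface with finite automorphism group, $\overline{\Amp(M)}$ has rational isotropic ideal vertices while the limit set is empty). Near such a point there is no peripheral subgroup to absorb the non-compactness, and no choice of horoballs makes $P\setminus(\text{horoballs})$ compact. The paper's fix is to first intersect with the convex hull $C(\Gamma)$ of the limit set, i.e.\ to work with $X=Y\cap C(\Gamma)$ and $\Pi\cap C(\Gamma)$, and then to prove the key technical statement (Lemma \ref{lem: apanasov lemma}, after Apanasov) that $\overline{\Pi}\cap L(\Gamma)$ consists of \emph{finitely many cusped} limit points --- it contains no conical limit points by local finiteness of the tiling, and each remaining ideal point admits a genuine cusped region via the Bieberbach theorem applied to its stabilizer. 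Only after that does Proposition \ref{prop: key prop} give compactness of $(\Pi\cap C(\Gamma))\setminus V$. You flag the cusp analysis as the delicate point, but the framework as written (neutering $Y$ rather than $Y\cap C(\Gamma)$) would fail before reaching it. Two smaller omissions: one must also check that the neutered convex set is still a simply connected geodesic space (the paper shrinks the horoballs and verifies convexity via the description of geodesics in truncated hyperbolic space, Theorem \ref{thm: truncated}) before \v{S}varc--Milnor yields finite \emph{presentation}; and once the action on the neutered space is proper and cocompact you do not additionally need finite presentation of the parabolic stabilizers --- invoking them suggests a combination-theorem argument you have not set up.
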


%
The goal of this note is to show that many finiteness properties of automorphism groups of projective hyperk\"{a}hler manifolds (including Theorem \ref{thm: Catanneo} and Theorem \ref{thm: Oguiso Tits}) follow from the fact that these groups act geometrically on some metric space of non-positive curvature. These metric spaces are the so-called CAT(0) spaces. Roughly speaking, these are spaces which are at least as non-positively curved as the Euclidean plane. Our method is not new: its rough sketch can be found already in \cite{TotaroHyperbolic} (in the context of K3 surfaces), and then it was applied in \cite{Benzerga} for proving that rational algebraic surfaces with a structure of so-called klt Calabi-Yau pair have finitely many real forms. However, to apply the same strategy to hyperk\"{a}hler manifolds one needs to have some tools which were developed only recently (e.g. that the Kawamata-Morrison cone conjecture holds for hyperk\"{a}hler manifolds). On the other hand, the fact that finite presentation of the automorphism group of a projective hyperk\"{a}hler manifold was proved only in 2018, shows that connection of these manifolds with CAT(0) spaces is still not a common knowledge. Therefore, in this note we tried to give a reasonably self-contained account of the corresponding construction. Our first main result is the following:

\begin{theoremA}
	Let $M$ be a projective hyperk\"{a}hler manifold. Then the groups $\Aut(M)$ and $\Bir(M)$ are CAT(0) groups. In particular, they are finitely presented.
\end{theoremA}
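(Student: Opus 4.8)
The plan is to exhibit $\Aut(M)$ and $\Bir(M)$ as groups acting geometrically on a non-positively curved space manufactured from the Beauville--Bogomolov--Fujiki (BBF) form on $\NS(M)$, and then to quote the \v{S}varc--Milnor lemma for finite presentation. First I would record that the BBF form $\qform$ turns $\NS(M)_{\RR}$ into a quadratic space of signature $(1,\rho(M)-1)$: an ample class has positive square, and by the Hodge index theorem its orthogonal complement in $\NS(M)_{\RR}$ is negative definite. Hence the connected component $\mathcal{C}$ of $\{x\in\NS(M)_{\RR}:\qform(x)>0\}$ that contains the ample classes is a round Lorentzian cone, and its projectivization is a copy of real hyperbolic space $\HH^{\rho(M)-1}$ --- a complete, proper, geodesic space of strictly negative curvature, in particular CAT(0). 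Both $\Aut(M)$ and $\Bir(M)$ act on $H^2(M,\ZZ)$ preserving $\qform$ and the cone $\mathcal{C}$ (a bimeromorphic self-map of a hyperk\"{a}hler manifold is an isomorphism in codimension one and so induces a Hodge isometry of $H^2$), whence they act on $\HH^{\rho(M)-1}$ by isometries. Since the kernel of the action on $H^2(M,\ZZ)$ is finite and its image is a discrete subgroup of $\Isom(\HH^{\rho(M)-1})$, both actions are properly discontinuous with finite point stabilizers.

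The arithmetic input needed for cocompactness is the Kawamata--Morrison cone conjecture for hyperk\"{a}hler manifolds (Markman--Yoshioka, Amerik--Verbitsky): the action of $\Aut(M)$ on the nef cone $\Nef(M)\cap\mathcal{C}$ admits a rational polyhedral fundamental domain $\Pi$, and, likewise, $\Bir(M)$ acts on the movable cone $\Mov(M)\cap\mathcal{C}$ with such a domain. Projectivizing, $\Nef(M)\cap\mathcal{C}$ (respectively $\Mov(M)\cap\mathcal{C}$) is a convex subset $D\subseteq\HH^{\rho(M)-1}$ and is therefore itself a CAT(0) space on which the group in question acts with a finite-sided fundamental polyhedron $\overline{\Pi}$.

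The main obstacle is that $\overline{\Pi}$ is in general \emph{not} compact: rationality of $\Pi$ permits extremal rays lying on the isotropic boundary $\partial\mathcal{C}=\{\qform=0\}$ --- these are nef or movable classes of square zero, the typical source being the fibre class of a Lagrangian fibration --- and such rays correspond to cusps of $D$ reaching the ideal boundary of $\HH^{\rho(M)-1}$. To restore cocompactness I would pass to a \emph{neutered} space: there are only finitely many orbits of such isotropic rays (finitely many extremal rays of $\Pi$), their stabilizers are virtually abelian, and one can delete an equivariant family of pairwise disjoint open horoballs centred at them, equipping the complement $D'\subseteq D$ with the induced length metric. Because horoballs are convex and horospheres carry a flat intrinsic metric, $D'$ remains a proper geodesic CAT(0) space --- the standard neutered-space construction, a Hadamard space with isolated flats --- while the truncated fundamental domain now meets only finitely many horoballs and is compact. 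Thus $\Aut(M)$ and $\Bir(M)$ act properly and cocompactly by isometries on the CAT(0) space $D'$, i.e.\ they are CAT(0) groups, and the \v{S}varc--Milnor lemma gives that they are finitely generated and quasi-isometric to $D'$; finite presentation follows from the contractibility of $D'$. The two points I expect to demand the most care are the verification that the neutered region $D'$ is genuinely CAT(0) and the equivariant, disjoint choice of horoballs, which in turn rests on understanding the cusp stabilizers attached to the isotropic boundary of the cone-conjecture fundamental domain.
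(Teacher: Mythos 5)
Your strategy is the one the paper follows: hyperbolize $\NS(M)_\RR$ via the BBF form, invoke the cone conjecture for a rational polyhedral fundamental domain, and neuter the resulting convex subset of $\HH^{\rho(M)-1}$ by an equivariant family of disjoint open horoballs. But there is a genuine gap in the cocompactness step. You assume that all of the non-compactness of the fundamental polyhedron is concentrated at ``cusps'', i.e.\ at isotropic extremal rays whose stabilizers are infinite, virtually abelian, of parabolic type, so that excising horoballs there leaves a compact set. In general the closure of the fundamental polyhedron in $\overline{\BB}^{\,n}$ can meet the sphere at infinity in \emph{ordinary} points of $\Gamma$ --- ideal points that are not limit points at all and have finite stabilizer; near such a point no equivariant disjoint family of horoballs exists, and the quotient stays non-compact no matter what you delete. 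The paper's fix is to first replace $D=\pr\big(\Cone\cap\HH^n\big)$ by $X=D\cap C(\Gamma)$, where $C(\Gamma)$ is the convex hull of the limit set, and only then to prove (Lemma \ref{lem: apanasov lemma}, following Apanasov) that $\overline{\Pi}\cap L(\Gamma)$ is a \emph{finite set of cusped limit points} --- in particular that no conical limit point can lie on $\overline{\Pi}$ and that each surviving ideal point genuinely admits a cusped region satisfying the disjointness condition. Without the restriction to $C(\Gamma)$, the claim ``the truncated fundamental domain is compact'' is simply false, and with it, the finiteness and cuspedness of $\overline{\Pi}\cap L(\Gamma)$ still require the Bieberbach-type analysis that the paper carries out; neither appears in your proposal.

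Two smaller omissions. The assertion that the neutered set is CAT(0) ``because horoballs are convex'' is not quite enough: geodesics of the truncated space are concatenations of hyperbolic arcs and Euclidean arcs on horospheres (Theorem \ref{thm: truncated}), and one must arrange that the Euclidean arcs stay inside $X$; the paper shrinks each horoball until the antipode of its base point lies in $X$, which is what forces convexity of $X\setminus W$ in the truncated ambient space. Also, the hyperbolic picture degenerates for $\rho(M)\leqslant 2$ (the Setup requires $n\geqslant 2$); the paper handles $\rho(M)=2$ separately via Lazi\'c--Peternell, showing the groups are virtually cyclic and hence CAT(0) for trivial reasons. These two are repairable details; the limit-set issue above is the real missing idea.
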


From this theorem we easily deduce the following strong form of Tits' alternative for hyperk\"{a}hler manifolds.

\begin{theoremB}\label{thm: Our Tits main}
	Let $M$ be a projective hyperk\"{a}hler manifold, and $G\subseteq\Bir(M)$ be a subgroup. Then
	\begin{enumerate}
		\item either $G$ contains a finite index subgroup isomorphic to $\ZZ^n$;
		\item or $G$ contains a non-commutative free group.
	\end{enumerate}
	In particular, so are $\Bir(M)$ and $\Aut(M)$.
\end{theoremB}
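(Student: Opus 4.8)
The plan is to combine the geometric input of Theorem A with the dichotomy already furnished by Oguiso's Theorem \ref{thm: Oguiso Tits}, using the former only to sharpen ``almost abelian'' into ``virtually free abelian''. Throughout set $\Gamma=\Bir(M)$ and recall from Theorem A that $\Gamma$ acts properly and cocompactly by isometries on a CAT(0) space $X$. Two classical facts about such groups will be used. First, the Flat Torus Theorem: any free abelian subgroup $\ZZ^k\le\Gamma$ consists of semisimple isometries and stabilises an isometrically embedded flat $\EE^k\subseteq X$ on which it acts by translations. Second, the Solvable Subgroup Theorem of Bridson--Haefliger: every virtually solvable subgroup of $\Gamma$ is finitely generated and contains a finite index subgroup isomorphic to $\ZZ^n$ for some $n$.

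First I would fix a subgroup $G\subseteq\Gamma$ and apply Oguiso's Theorem \ref{thm: Oguiso Tits}. If $G$ contains a non-commutative free group we are in case (2) and there is nothing more to prove. Otherwise $G$ is almost abelian of finite rank, so it has a finite index normal subgroup $G'$ sitting in an exact sequence $\id\to K\to G'\to\ZZ^r\to 0$ with $K$ finite. I would then observe that $G$ is virtually solvable: the centraliser $C_{G'}(K)$ has index at most $|\Aut(K)|$ in $G'$, and since $C_{G'}(K)\cap K=Z(K)$ is central in $C_{G'}(K)$ while $C_{G'}(K)/Z(K)$ embeds into $\ZZ^r$, the group $C_{G'}(K)$ is nilpotent of class at most $2$, in particular solvable.

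Next I would feed this into the CAT(0) geometry. Being a virtually solvable subgroup of the CAT(0) group $\Gamma$, the group $G$ is, by the Solvable Subgroup Theorem, finitely generated and virtually $\ZZ^n$, which places us in case (1). The point where Theorem A does the real work is that it forces the abelian part to be \emph{torsion-free}: via the Flat Torus Theorem underlying the proof, the relevant finite index free abelian subgroup acts freely and cocompactly by translations on a flat $\EE^n\subseteq X$, so it is genuinely $\cong\ZZ^n$ rather than merely finite-by-$\ZZ^n$. Applying the dichotomy to $G=\Bir(M)$ and to $G=\Aut(M)\subseteq\Bir(M)$ yields the final sentence.

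The main obstacle is conceptual rather than computational: the genuine dichotomy ``virtually solvable or contains a free group'' is exactly the Tits alternative, and for arbitrary CAT(0) groups this remains open, so it cannot be extracted from Theorem A alone. My plan therefore imports the dichotomy from Oguiso's Theorem \ref{thm: Oguiso Tits} and uses Theorem A only for the structural refinement. A self-contained geometric route to the dichotomy would run through the action of $\Gamma$ on the real hyperbolic space $\HH^{\rho(M)-1}$ underlying $X$, coming from the Beauville--Bogomolov form of signature $(1,\rho(M)-1)$, together with the Tits alternative for isometry groups of Gromov-hyperbolic spaces; the delicate step there is controlling the elementary, in particular parabolic, subgroups, which requires discreteness and Bieberbach-type arguments, and this is precisely what makes the detour through Oguiso's theorem the more economical option.
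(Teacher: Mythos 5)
Your proof is correct, but it reaches the dichotomy by a different route than the paper. You import ``free subgroup or almost abelian'' from Oguiso's Theorem \ref{thm: Oguiso Tits} and then spend your effort showing that an almost abelian group is virtually solvable (the centraliser-of-$K$ argument, giving a finite-index nilpotent subgroup of class $\leqslant 2$), before invoking the Solvable Subgroup Theorem. The paper instead bypasses Oguiso entirely: it applies the \emph{classical} Tits alternative for linear groups to the image $G^*=\Phi_{\NS}(G)\subset\GL(\NS(M)\otimes\RR)$, pulls back a finite-index solvable subgroup $S^*$ to $S=\Phi_{\NS}^{-1}(S^*)$, and kills the finite kernel $N$ of Proposition \ref{prop: kernel is finite} by passing to the finite-index centraliser $C_S(N)$ --- the same centraliser trick you use, just applied to $\ker\Phi_{\NS}$ rather than to the finite group $K$ in Oguiso's exact sequence. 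Both arguments then finish identically with Theorem \ref{thm: finiteness properties} (3) and (4). Logically your derivation is not circular, since Theorem B is strictly stronger than Theorem \ref{thm: Oguiso Tits}; but it is circular in spirit, because one of the paper's stated aims is precisely to \emph{rederive} Oguiso's alternative from the CAT(0) structure rather than from his Lie--Kolchin/Salem-polynomial analysis, and the paper's route needs only the standard Tits alternative for $\GL_n$ plus finiteness of $\ker\Phi_{\NS}$. Two minor remarks: your appeal to the Flat Torus Theorem to ``force torsion-freeness'' is unnecessary, since a finitely generated abelian group of finite index is $\ZZ^n\oplus(\text{finite})$ and hence already contains $\ZZ^n$ with finite index; and when you pass between $G$ and its image under $\Phi_{\NS}$ (or between $G$ and $G'$), you should note explicitly that containing a non-abelian free subgroup is insensitive to finite kernels and finite index, which is what makes the reduction legitimate.
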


\begin{theoremC}[cf. Theorem 7.1 in \cite{Fu}]\label{thm: main 2}
	Let $M$ be a projective hyperk\"{a}hler manifold. Then 
		the groups $\Aut(M)$ and $\Bir(M)$ have finitely many conjugacy classes of finite subgroups. In particular, there exists a constant\footnote{This statement also follows from \cite[Theorem 1.8]{PS-Jordan}, see Remark \ref{rem: Costya-Prokh} below.} $\B=\B(M)$ such that $|G|\leqslant \B$ for any finite subgroup $G\subset\Bir(M)$.
\end{theoremC}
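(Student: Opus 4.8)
The plan is to derive the statement purely formally from Theorem A, using the standard fact that a group acting geometrically on a CAT(0) space has only finitely many conjugacy classes of finite subgroups; the one piece of geometric input is the Bruhat--Tits (Cartan) fixed point theorem. By Theorem A, each of $\Aut(M)$ and $\Bir(M)$ acts properly and cocompactly by isometries on a complete CAT(0) space $X$, so it is enough to prove the assertion for an abstract group $\Gamma$ admitting such a geometric action on a complete CAT(0) space $X$, and then apply it to $\Bir(M)$ (the claim for $\Aut(M)$ follows in the same way, or simply because finite subgroups of $\Aut(M)$ are finite subgroups of $\Bir(M)$).

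First I would recall the fixed point property. Given a finite subgroup $F\subseteq\Gamma$ and any point $x\in X$, the orbit $F\cdot x$ is a bounded subset of $X$; since a bounded set in a complete CAT(0) space has a unique circumcenter, and this circumcenter is preserved by every isometry permuting the set, the group $F$ fixes the circumcenter $p$ of $F\cdot x$. Thus every finite subgroup of $\Gamma$ has a fixed point in $X$.

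Next, fix a basepoint $x_0\in X$. By cocompactness there is a radius $R>0$ with $\Gamma\cdot\overline{B}(x_0,R)=X$. If $F\subseteq\Gamma$ is finite with fixed point $p$, choose $\gamma\in\Gamma$ so that $p'=\gamma^{-1}p$ lies in $\overline{B}(x_0,R)$; then $\gamma^{-1}F\gamma$ fixes $p'$. For any isometry $g$ fixing $p'$ one has $d(gx_0,x_0)\leqslant d(gx_0,gp')+d(p',x_0)=2\,d(x_0,p')\leqslant 2R$, so $\gamma^{-1}F\gamma$ is contained in the set $S=\{g\in\Gamma : d(gx_0,x_0)\leqslant 2R\}$, which is finite by properness of the action. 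Hence every finite subgroup of $\Gamma$ is conjugate into the fixed finite set $S$. Since $S$ has only finitely many subsets there are only finitely many conjugacy classes of finite subgroups, and the order of any finite subgroup is at most $|S|$; taking $\B=\B(M)=|S|$ gives the uniform bound on finite subgroups of $\Bir(M)$.

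The main obstacle is not in this deduction, which is entirely formal, but in guaranteeing that Theorem A really supplies a \emph{proper} and \emph{cocompact} isometric action on a \emph{complete} CAT(0) space: completeness is what makes circumcenters (and hence the fixed point theorem) available, while cocompactness on the whole space used --- as opposed to mere existence of a rational polyhedral fundamental domain on a cone --- is what bounds the displacement of $x_0$ and forces $S$ to be finite. These are precisely the properties secured by the construction behind Theorem A, which rests on the validity of the Kawamata--Morrison cone conjecture for hyperk\"{a}hler manifolds.
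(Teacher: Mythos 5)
Your proposal is correct and follows essentially the same route as the paper: the paper deduces Theorem C from Theorem A together with the cited fact that CAT(0) groups have finitely many conjugacy classes of finite subgroups (Theorem \ref{thm: finiteness properties}(2), quoted from Bridson--Haefliger), and your circumcenter/cocompactness argument is precisely the standard proof of that cited fact. The only difference is that you unpack the black box instead of citing it, which is a legitimate and complete way to present the same argument.
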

\begin{rem}\label{rem: platonov}
		A reader familiar with some basic properties of hyperk\"{a}hler manifolds might have an impression that Theorem C easily follows from the fact that both $\Aut(M)$ and $\Bir(M)$ admit a natural representation in $\GL(\NS(M))$ with a finite kernel, and the groups $\GL_n(\ZZ)$ are known to have only finitely many conjugacy classes of finite subgroups. However, even finitely generated subgroups of $\GL_n(\ZZ)$ may violate the latter property. Indeed, in \cite{Platonov} Grunewald and Platonov give an example of a finitely generated subgroup of $\SL_4(\ZZ)$ that contains infinitely many conjugacy classes of elements of order 4.
\end{rem}

\begin{rem}
	Let $\mathcal{G}$ be a family of groups. Following the terminology introduced in \cite{Popov} (see also \cite{PS-Jordan,PS-Analytic}) we say that $\mathcal{G}$ is {\it uniformly Jordan} (resp. has {\it uniformly bounded finite subgroups}) if there is a constant $\J=\J(\mathcal{G})$ (resp. $\B=\B(\mathcal{G})$) such that for any group $\Gamma\in\mathcal{G}$ and any finite subgroup $G\subset\Gamma$ there exists a normal abelian subgroup $A\subset G$ of index at most $\J$ (resp. $|G|\leqslant\B$). We say that $\Gamma$ is {\it Jordan} (resp. has {\it bounded finite subgroups}) if the family $\{\Gamma\}$ is uniformly Jordan (resp. bounded). In view of Theorem C and Remark \ref{rem: K3 finite} it is natural then to ask if the following group-theoretic analog of Beauville's finiteness conjecture is true:
	\begin{quest}
		Consider the family
		\[
		\GGG_n=\big \{\Bir(M):\ M\ \text{is a projective hyperk\"{a}hler manifold of dimension}\ 2n\big \}.
		\]
		Does the familly $\GGG_n$ have uniformly bounded finite subgroups (with a constant $\B=\B(n)$ depending only on $n$)? Is it at least uniformly Jordan with $\J=\J(n)$? Same questions for $\Aut(M)$.
	\end{quest}
	In some particular cases one can hope to obtain such bounds using results of  \cite{Guan}, \cite{Nikon}, and \cite{Sawon}.
\end{rem}

This paper is organized as follows. In Section \ref{sec: prelim} we recall some basic facts about hyperk\"{a}hler manifolds, and then discuss the Kawamata-Morrison cone conjecture for these manifolds, proved by Amerik and Verbitsky in \cite{KM} for all possible values of second Betti number except $b_2=5$. We then indicate the proof of this conjecture in the case $b_2=5$, expanding the sketch of the proof from \cite{KM2}.  Section \ref{section: metric} is devoted to CAT(0) metric spaces and properties of groups, which act geometrically on them. We show how the groups of biholomorphic and bimeromorphic self-maps of projective hyperk\"{a}hler manifolds are related to these spaces, and then deduce Theorem B and Theorem C from Theorem A. We also obtain some structural results about groups consisting of transformations of infinite order. In Section \ref{sec: construction} we construct a CAT(0) space, on which the groups $\Aut(M)$ and $\Bir(M)$ act properly and cocompactly by isometries, hence proving Theorem A. Although the construction of the space is not difficult, one should verify many technical conditions which guarantee that our groups act <<nicely>> on the resulting space. This is why we divided the proof into several steps, which we explain in details in further subsections. Finally, in Section \ref{sec: examples} we discuss some further examples, where this technique could be applied. 

\section{Preliminaries}\label{sec: prelim}

\subsection{Hyperk\"{a}hler manifolds}

By a {\it hyperk\"{a}hler} (or {\it irreducible holomorphic symplectic}) manifold we mean a compact simply-connected complex K\"{a}hler manifold $M$ having everywhere non-degenerate holomorphic 2-form $\omega_M$ such that $H^0(M,\Omega_M^2)=\CC\omega_M$. These manifolds are even dimensional Calabi-Yau manifolds and play a very important role in classification of K\"{a}hler manifolds with trivial Chern class. Namely, Beauville-Bogomolov decomposition theorem (\cite{B}) states that for any compact K\"{a}hler manifold with trivial Chern class there exists a finite \'etale cover $\widetilde{M}\to M$ such that
\[
\widetilde{M}\cong T\times\prod_{i=1}^{n}Y_i\times\prod_{j=1}^{m}Z_j,
\]
where $T$ is a complex torus, $Y_i$ are strict Calabi-Yau manifolds (with $\pi_1(Y_i)=0$, $K_{Y_i}=\OOO_{Y_i}$ and $h^{0,p}=0$ for $0<p<\dim Y_i$), and $Z_j$ are hyperk\"{a}hler manifolds.

One of the most important properties of a hyperk\"{a}hler manifold is the existence of Beauville-Bogomolov-Fujiki form (BBF-form for short) (\cite{Bea, F}. This is an integral symmetric bilinear form $q_{BBF}$ on $H^2(M,\ZZ)$ of signature $(3, 0, b_2(M)-3)$. By means of BBF-form, the signature of the Neron-Severi group $\NS(M)$ is one of the following:
\[
(1, 0, \rho(M)-1),\ \ \ (0, 1, \rho(M)-1),\ \ (0, 0, \rho(M)).
\]
We call these three cases hyperbolic, parabolic, and elliptic respectively. Due to a deep result of Huybrechts, $M$ is projective if and only if $\NS(M)$ is hyperbolic \cite{H}, \cite[Proposition 26.13]{GHJ}. 

Let $f: M\dashrightarrow M'$ be a bimeromorphic map between Calabi-Yau manifolds. By \cite[III.25.14]{GHJ} this map $f$ is an isomorphism in codimension 1 and induces a linear isomorphism
\[
f^*: H^2(M',\ZZ)\overset{\sim}{\to} H^2(M,\ZZ)
\]
which in the case of hyperk\"{a}hler manifolds preserves the BBF-form. In particular, there is a group homomorphism
\[
\Phi_{\NS}: \Bir(M)\to\OO(\NS(M)\otimes\RR,q_{BBF}).
\]
Put 
\[
\Bir^*(M)=\Phi_{\NS}(\Bir(M)),\ \ \ \Aut^*(M)=\Phi_{\NS}(\Aut(M)).
\]
We have the following important fact.

\begin{prop}[{\cite[Proposition 2.4]{OgPic2}}]\label{prop: kernel is finite}
	Let $M$ be a projective Calabi-Yau manifold. Then the kernel of a homomorphism 
	\[
	\Phi_{\NS}: \Bir(M)\to\GL(\NS(M))
	\]
	is a finite group.
\end{prop}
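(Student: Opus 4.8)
The plan is to show that every element of the kernel is actually a biregular automorphism fixing a polarization, and then that the group of such polarized automorphisms is finite. So let $f\in\Bir(M)$ with $\Phi_{\NS}(f)=\id$, i.e. $f^*$ acts trivially on $\NS(M)$. As recalled in the excerpt, $f$ is an isomorphism in codimension $1$: there are open subsets $U,V\subseteq M$, each with complement of codimension $\geqslant 2$, and an isomorphism $f\colon U\overset{\sim}{\to}V$. Since $M$ is projective, I would fix an ample class and represent it by an ample line bundle $H$. Because $M$ is a hyperk\"ahler (more generally Calabi-Yau) manifold we have $H^1(M,\OOO_M)=0$, so that $\Pic(M)=\NS(M)$ and $\Pic^0(M)=0$; as $f^*$ fixes the class $[H]\in\NS(M)$, the line bundles $f^*\OOO_M(H)$ and $\OOO_M(H)$ differ by a torsion element, and after replacing $H$ by a suitable multiple we may assume $f^*\OOO_M(H)\cong\OOO_M(H)$.

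Next I would upgrade $f$ to a biregular map. Since $M$ is smooth, hence normal, and $M\setminus U$, $M\setminus V$ have codimension $\geqslant 2$, Hartogs' extension gives $H^0(U,\OOO_M(mH))=H^0(M,\OOO_M(mH))=H^0(V,\OOO_M(mH))$ for every $m$. Hence $f^*$ induces a linear automorphism of $W_m:=H^0(M,\OOO_M(mH))$. For $m\gg 0$ the linear system $|mH|$ embeds $M$ as a closed subvariety of $\PP(W_m^\vee)$, and on $U$ the identification of line bundles shows that $f$ coincides with the projective-linear transformation induced by $f^*$ on $W_m$. As $M$ is closed in $\PP(W_m^\vee)$, this transformation preserves $M$ and defines a biregular automorphism extending $f$; thus $f\in\Aut(M)$, and it fixes the ample class $[H]$.

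Finally, it remains to see that the group $\Aut_{[H]}(M)$ of biholomorphic automorphisms fixing the ample (hence K\"ahler) class $[H]$ is finite, since it contains the kernel. By the Fujiki--Lieberman theorem the subgroup of $\Aut(M)$ preserving a fixed K\"ahler class has finitely many connected components, so $\Aut_{[H]}(M)/\Aut^0(M)$ is finite. The identity component $\Aut^0(M)$ is a complex Lie group whose Lie algebra is $H^0(M,T_M)$; for a hyperk\"ahler manifold the holomorphic symplectic form $\omega_M$ yields an isomorphism $T_M\cong\Omega_M^1$, whence $H^0(M,T_M)\cong H^0(M,\Omega_M^1)=0$ because $b_1(M)=0$. (For a general Calabi-Yau manifold one uses $T_M\cong\Omega_M^{n-1}$ together with $h^{n-1,0}=0$.) Therefore $\Aut^0(M)$ is trivial, $\Aut_{[H]}(M)$ is finite, and so is $\Ker\Phi_{\NS}$.

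I expect the main obstacle to be the middle step, the passage from bimeromorphic to biregular: one must carefully justify the extension of sections across the codimension $\geqslant 2$ indeterminacy loci and verify that $f$ genuinely agrees with the projective transformation induced by $f^*$, rather than relying on the a priori weaker statement that $f$ merely preserves the class $[H]$. Once $f$ is known to be a polarized automorphism, the inputs from Fujiki--Lieberman and the vanishing $H^0(M,T_M)=0$ are comparatively formal.
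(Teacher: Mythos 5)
Your proof is correct and follows essentially the same route as the source the paper cites for this statement (\cite[Proposition 2.4]{OgPic2}): an element of the kernel fixes an ample class, is therefore a biregular polarized automorphism by the standard linear-system/Hartogs argument for maps that are isomorphisms in codimension one, and the group of automorphisms fixing a polarization is finite by Fujiki--Lieberman together with $H^0(M,T_M)=0$ (via $T_M\cong\Omega_M^1$ in the hyperk\"ahler case, $T_M\cong\Omega_M^{n-1}$ in the strict Calabi--Yau case). The paper itself gives no proof, only the citation, so there is nothing further to compare.
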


\subsection{The Kawamata-Morrison conjecture}

In this note we shall consider various cones (i.e. subsets stable under multiplication by $\RR_{>0}$) in the finite-dimensional vector space $\NS(M)_\RR=\NS(M)\otimes\RR$ equipped with $\ZZ$-structure given by $\NS(M)=H^{1,1}(M,\RR)\cap H^2(M,\ZZ)$.

Let $M$ be a compact, K\"{a}hler manifold. In what follows $\Kah(M)\subset H^{1,1}(M,\RR)$ will denote the open convex {\it K\"{a}hler cone} of $M$. Its closure $\Nef(M)=\overline{\Kah(M)}$ in $H^{1,1}(M,\RR)$ is called the {\it nef cone}. Further, $\Amp(M)$ will denote the {\it ample cone} of $M$. For some varieties these cones have a nice structure, e.g. for Fano varieties they are rational polyhedral. However in general they can be quite mysterious: they can have infinitely many isolated extremal rays or <<round>> parts. Both phenomena occur already for K3 surfaces. The Kawamata-Morrison cone conjecture predicts that for Calabi-Yau varieties the structure of these cones (or rather some closely related cones) is nice ``up to the action of the automorphism group''. Before stating a suitable version of this conjecture, we need some definitions.

Let $V$ be a finite-dimensional real vector space equipped with a fixed $\QQ$-structure.  
A {\it rational polyhedral cone} in $V$ is a cone, which is an intersection of finitely many half spaces defined over $\QQ$. In particular, such a cone is convex and has finitely many faces. For an open convex cone $\Cone\subset V$ we denote by $\Cone^+$ the convex hull of $\overline{\Cone}\cap V(\QQ)$.

Let $\Gamma$ be a group acting on a topological space $X$. A {\it fundamental domain} for the action of $\Gamma$ is a connected open subset $D\subset X$ such that
\[
\bigcup_{\gamma\in\Gamma}\gamma\cdot \overline{D}=X,
\]
and the sets $\gamma\cdot D$ are pairwise disjoint. Let $X$ be a subset of a metric space $Y$ (typically $Y$ will be either Euclidean or hyperbolic $n$-space). A {\it side} of a convex subset $C\subset Y$ is a maximal nonempty convex subset of $\partial C$. A {\it polyhedron} in $Y$ is a nonempty closed convex subset whose collection of sides is locally finite. A {\it fundamental polyhedron} for the action of a discrete isometry group $\Gamma$ on $X$ is a convex polyhedron $D$ whose interior is a locally finite fundamental domain for $\Gamma$. Local finiteness means that for each point $x\in X$ there is an open neighborhood $U$ of $x$ such that $U$ meets only finitely many sets $\gamma \overline{D}$, $\gamma\in\Gamma$. Obviously, this also implies that every compact subset $K\subset X$ intersects only finitely many sets $\gamma \overline{D}$.

One of the versions of the Kawamata-Morrison cone conjecture says that the action of the automorphism group of a Calabi-Yau variety on the cone $\Amp(M)^+$ has a rational polyhedral fundamental domain. There is also a birational version for $\Bir(M)$ and $\Mov(M)^+$ respectively, where $\Mov(M)$ denotes the {\it movable cone}, i.e. the convex hull in $\NS(M)_\RR$ of all classes of movable line bundles on $M$. The conjecture has been proved for K3 surfaces by Sterk and Namikawa \cite{Sterk}, \cite{Namikawa} using the Torelli theorem of Piatetski-Shapiro and Shafarevich, and generalized later on 2-dimensional Calabi-Yau pairs by Totaro \cite{Totaro}. For projective hyperk\"{a}hler manifolds the following versions of the Kawamata-Morrison conjectures were recently proved by E. Markman, E. Amerik and M. Verbitsky:

\begin{thm}\label{thm: cone theorems}
	Let $M$ be a projective simple hyperk\"{a}hler manifold. Then 
	\begin{enumerate}
		\item \cite[Theorem 5.6]{KM} The group $\Aut(M)$ has a finite polyhedral fundamental domain on $\Amp(M)^+$.
		\item \cite[Theorem 6.25]{M} The group $\Bir(M)$ has a rational polyhedral fundamental domain on $\Mov(M)^+$.
	\end{enumerate}
\end{thm}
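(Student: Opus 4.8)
The plan is to reduce both assertions to the Global Torelli theorem together with the realization of the positive cone as a model of hyperbolic space, the essential analytic input being a finiteness statement for the walls that cut out the ample and movable cones. First I would fix the geometry of the cone. Since $M$ is projective, $\NS(M)$ is hyperbolic, so $q_{BBF}$ has signature $(1,0,\rho(M)-1)$ on $\NS(M)_\RR$, and the set $\{x : q_{BBF}(x,x)>0\}$ has two components interchanged by $x\mapsto -x$; let $\mathcal C$ be the component containing an ample class. Projectivizing $\mathcal C$ yields a copy of hyperbolic space $\HH^{\rho(M)-1}$, on which $\OO(\NS(M),q_{BBF})$ acts by isometries and on which any arithmetic subgroup acts properly discontinuously.

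Next I would pass from automorphisms to isometries. By the homomorphism $\Phi_{\NS}$ and Proposition \ref{prop: kernel is finite} it suffices to control the images $\Aut^*(M)$ and $\Bir^*(M)$ in $\OO(\NS(M),q_{BBF})$. The Global Torelli theorem (Verbitsky, Markman) identifies $\Bir^*(M)$ with the subgroup of the monodromy group $\Monodromy(M)$ preserving the Hodge structure, which is an arithmetic group, and identifies $\Aut^*(M)$ with its further subgroup stabilizing the ample cone. Moreover $\Amp(M)$ is a connected component of $\mathcal C\setminus\bigcup_i\delta_i^\perp$, where the $\delta_i$ range over the MBM (monodromy birationally minimal) classes of Hodge type $(1,1)$, while $\Mov(M)$ is the analogous chamber for the sub-collection of walls attached to classes of prime exceptional divisors. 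Hence $\Aut^*(M)$ (resp. $\Bir^*(M)$) is precisely the stabilizer of the chamber $\Amp(M)$ (resp. $\Mov(M)$) inside the pertinent arithmetic group.

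The crux, and the step I expect to be the main obstacle, is to show that this wall system is locally finite in $\mathcal C$ and falls into finitely many orbits under the acting group; equivalently, that the MBM classes of bounded BBF-square form finitely many monodromy orbits. This is the deep geometric content: in the surface case it is the Torelli-based finiteness exploited by Sterk and Namikawa, whereas in higher dimension it rests on Markman's monodromy theory together with the deformation invariance of MBM classes and the ergodic/hyperbolic-geometry argument of Amerik and Verbitsky controlling how the totally geodesic walls $\delta_i^\perp\cap\HH^{\rho(M)-1}$ may accumulate. Without this input the chamber could \emph{a priori} have infinitely many faces even after passing to the quotient, which is exactly the phenomenon one must rule out.

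Finally I would invoke the cone-conjecture machine for hyperbolic-type arithmetic groups, a Looijenga-style result in the spirit of Totaro's treatment of the two-dimensional case: if an arithmetic subgroup $\Gamma\subset\OO(\Lambda)$ of a lattice $\Lambda$ of signature $(1,n)$ stabilizes a chamber $C$ cut out by a $\Gamma$-invariant, locally finite wall system with finitely many orbits, then $\Gamma$ acts on $C^+$ with a rational polyhedral fundamental domain. Taking $\Gamma=\Aut^*(M)$ and $C=\Amp(M)$ yields $(1)$, and $\Gamma=\Bir^*(M)$, $C=\Mov(M)$ yields $(2)$; the finiteness of the kernel of $\Phi_{\NS}$ ensures that the statements for $\Aut(M)$ and $\Bir(M)$ follow from those for their images.
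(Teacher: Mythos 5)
Your outline is correct and follows essentially the same route as the paper, which cites these statements to Amerik--Verbitsky and Markman and, for the remaining case $b_2=5$, sketches exactly the argument you describe: MBM classes cut out the walls, the monodromy group acts on them with finitely many orbits (the key input from the homogeneous-dynamics results of Amerik--Verbitsky), and the Looijenga-style cone machinery recalled in Remark \ref{rem: domain properties} then produces the rational polyhedral fundamental domain. You have correctly isolated the finiteness of monodromy orbits of MBM classes as the crux, which is precisely the point the paper emphasizes.
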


\begin{rem}\label{rem: domain properties}
	For the reader who would like to follow Ratcliffe's \cite{Rat} exposition of geometrically finite groups, while reading Section \ref{subsec: step 1}, it may be useful to keep in mind a somewhat more explicit construction of fundamental polyhedrons in Theorem \ref{thm: cone theorems}. This is due to E. Looijenga \cite[Proposition 4.1 and Application 4.14]{Loo}. Let $C$ be a non-degenerate open convex cone in a finite dimensional real vector space $V$ equipped with a fixed $\QQ$-structure. Let $\Gamma$ be a subgroup of $\GL(V)$ which stabilizes $C$ and some lattice in $V(\QQ)$. Assume that there exists a polyhedral cone $\Pi$ in $C^+$ such that $\Gamma\cdot\Pi\supseteq C$ and there is an element $\xi\in C^\circ\cap V^*(\QQ)$ whose stabilizer $\Gamma_\xi$ is trivial\footnote{In our situation $\xi$ exists automatically even without assuming that there is a fundamental domain for $\Gamma$, see e.g. \cite[Proposition 6.6]{Fu}} (here $C^\circ$ denotes the open dual cone of $C$). Then $\Gamma$ admits a rational polyhedral fundamental domain $\Sigma$ on $C^+$. Moreover, as was noticed before \cite[Theorem 3.1]{Totaro} (and proved in \cite[Lemma 2.2]{TotaroArxiv}) Looijenga's fundamental domain coincides with a {\it Dirichlet domain} of $\Gamma$ when the representation preserves a bilinear form of signature $(1,*)$. Recall that for a discontinuous group $\Gamma$ of isometries of a metric space $(X,d)$ and a point $\xi\in X$ with a trivial stabilizer $\Gamma_\xi$ one defines the Dirichlet domain for $\Gamma$ as the set
	\[
	D_\xi(\Gamma)=\big\{x\in X:\ d(x,\xi)\leqslant d(x,g\xi)\ \text{for all}\ g\in\Gamma\big\}.
	\]
	Dirichlet polyhedrons are known to have many good properties, in particular they are locally finite in the interior of the positive cone \cite[Corollary 2.3]{TotaroArxiv}) and exact \cite[Theorem 6.6.2]{Rat} (meaning that for each side $S$ of $D=D_\xi(\Gamma)$ there is an element $\gamma\in\Gamma$ such that $S=D\cap\gamma D$). 
\end{rem}


Theorem \ref{thm: cone theorems} (1) has been initially proved in an assumption $b_2 \neq 5$. Below we sketch a proof for the case $b_2 =5$, which follows from the results of Amerik and Verbitsky (see also \cite[Remark 1.5]{KM2}).

\begin{prop} \label{prop: b_2=5}
Let
$M$ be a projective hyperk\"ahler manifold with
$b_2 = 5$. The automorphism group has a rational polyhedral fundamental domain on the
ample cone of $M$.
\end{prop}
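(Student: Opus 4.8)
The plan is to verify the hypotheses of Looijenga's criterion recalled in Remark \ref{rem: domain properties} for the group $\Gamma=\Aut^*(M)$ acting on the ample cone. First I would record the linear-algebra setup forced by $b_2=5$. Since $M$ is projective, $\NS(M)$ is hyperbolic, and as $q_{BBF}$ has signature $(3,0,b_2-3)=(3,0,2)$ on $H^2(M,\ZZ)$, the transcendental part absorbs two of the positive directions, so $\rho(M)=\rk\NS(M)\leqslant 3$ and $q_{BBF}|_{\NS(M)_\RR}$ has signature $(1,0,\rho-1)$. The positive cone $\Cone\subset\NS(M)_\RR$ --- the connected component of $\{x:q_{BBF}(x)>0\}$ meeting $\Amp(M)$ --- thus carries the structure of a model of hyperbolic space $\HH^{\rho-1}$, on which $\OO(\NS(M),q_{BBF})$, and in particular $\Aut^*(M)$, acts by isometries. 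By Proposition \ref{prop: kernel is finite} the homomorphism $\Phi_{\NS}\colon\Aut(M)\to\Aut^*(M)$ has finite kernel, so producing a rational polyhedral fundamental domain for $\Aut^*(M)$ on $\Amp(M)^+$ is equivalent to the assertion of the proposition.

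To apply Looijenga's criterion to $C=\Amp(M)$ and $\Gamma=\Aut^*(M)$ I must exhibit (i) an invariant lattice --- namely $\NS(M)$ itself, which $\Aut^*(M)$ stabilizes by construction; (ii) an interior rational point $\xi\in C^\circ\cap\NS(M)_\QQ$ with trivial stabilizer $\Gamma_\xi$, whose existence is guaranteed independently of the proposition (see the footnote in Remark \ref{rem: domain properties}); and (iii) a polyhedral cone $\Pi\subset\Amp(M)^+$ with $\Aut^*(M)\cdot\Pi\supseteq\Amp(M)$. Everything therefore reduces to (iii). Here I would not argue on the ample cone directly, but descend from the movable cone: by Markman's theorem (Theorem \ref{thm: cone theorems}(2)), which carries no restriction on $b_2$, the group $\Bir^*(M)$ already admits a rational polyhedral fundamental domain $\Sigma$ on $\Mov(M)^+$.

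The remaining, and genuinely geometric, step is to pass from $\Mov(M)^+$ to $\Amp(M)^+$ using the theory of MBM (wall) classes of Amerik and Verbitsky. The interior of the movable cone is cut by the locally finite, $\Bir^*(M)$-invariant hyperplane arrangement $\bigcup_z z^\perp$, $z$ ranging over MBM classes, into open chambers; the ample cone is exactly the chamber containing a given polarization, and $\Aut^*(M)$ is precisely its stabilizer in $\Bir^*(M)$. Granting this, the finitely many walls meeting a relatively compact piece of $\Sigma$ in $\Mov(M)^\circ$ subdivide $\Sigma$ into finitely many subcones, each contained in a single $\Bir^*(M)$-translate of $\Amp(M)$; collecting the translates that land back in $\Amp(M)$ produces a polyhedral cone $\Pi$ with $\Aut^*(M)\cdot\Pi\supseteq\Amp(M)$, which is hypothesis (iii). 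The finiteness of this subdivision rests on the two inputs of Amerik--Verbitsky: that MBM classes have BBF-square bounded below by a constant depending only on the deformation type, and that they form finitely many $\mathrm{Mon}^2$-orbits.

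I expect the main obstacle to be exactly the verification of these two MBM finiteness inputs when $b_2=5$, together with the underlying arithmeticity of the monodromy image. With $\rho\leqslant 3$ the lattice $\NS(M)$ has small rank and the transcendental lattice has signature $(2,0,3-\rho)$, so the usual lattice-theoretic tools --- Eichler/Nikulin-type extension of Hodge isometries, which want $\NS(M)$ to contain two orthogonal hyperbolic planes, and the boundedness estimates for MBM squares --- are at their tightest, and this is why the case $b_2=5$ was excluded in the original formulation of Theorem \ref{thm: cone theorems}(1). The resolution I would follow is that Amerik--Verbitsky's bound on the BBF-square of MBM classes, the finiteness of their monodromy orbits, and the finite-index (``arithmeticity'') property of $\mathrm{Mon}^2_{\mathrm{Hdg}}$ inside $\OO(\NS(M))$ in fact hold with no hypothesis on $b_2$ (cf. \cite[Remark 1.5]{KM2}); once these are available for $\rho\leqslant 3$, the chamber-counting argument above goes through verbatim and Looijenga's criterion yields the desired rational polyhedral fundamental domain on $\Amp(M)^+$.
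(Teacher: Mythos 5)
Your proposal is correct in substance and, crucially, it isolates the same single new ingredient that the paper's proof hinges on: for $b_2=5$ the only missing piece of Amerik--Verbitsky's argument is the finiteness of monodromy orbits on MBM classes (equivalently, the boundedness of their BBF-squares), and this is supplied by the homogeneous-dynamics result of \cite{KM2} applied to $\SO(3,2)/(\SO(3)\times\SO(2))$. Where you diverge is in the concluding mechanism. The paper follows Amerik--Verbitsky's own route from \cite[Theorem 5.6]{KM}: once MBM finiteness is in hand, one invokes \cite[Theorem 6.6]{KM3} for finiteness of orbits of K\"ahler chambers, passes to the quotient $(\Pos(M)\cap\NS(M)_\RR)/\Gamma$ by the Hodge monodromy group, uses Borel--Harish-Chandra to see this is a finite-volume hyperbolic orbifold, and lifts a finite cell decomposition of the image of $\Amp(M)$ to get the fundamental domain. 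You instead verify Looijenga's criterion directly, taking Markman's rational polyhedral fundamental domain on $\Mov(M)^+$ (valid for all $b_2$) and cutting it by the finitely many MBM walls it meets to manufacture the polyhedral cone $\Pi$ with $\Aut^*(M)\cdot\Pi\supseteq\Amp(M)$. Both routes are legitimate; yours has the advantage of reusing Markman's theorem and avoiding the arithmetic-group input, at the cost of having to justify that only finitely many walls meet the fundamental domain of the movable cone (which again reduces to the same MBM square bound, so nothing is lost). Two small points: the finite union of subcones you collect need not be convex, so you should take its convex hull inside the convex set $\Amp(M)^+$ before feeding it to Looijenga; and your diagnosis of \emph{why} $b_2=5$ was originally excluded is slightly off --- the obstruction was not Eichler/Nikulin-type lattice arguments on $\NS(M)$ but the failure of the ergodic-theoretic argument for the rank-two homogeneous space $\SO(3,2)/(\SO(3)\times\SO(2))$, which is exactly what \cite{KM2} repairs. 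Neither point affects the validity of your argument.
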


Recall that the {\it mapping class group} is the group $\Diff(M )/ \Diff_0 (M )$, where $\Diff_0 (M )$ is a connected component of diffeomorphism group of $M$ (the group of isotopies). Consider the subgroup of the mapping class group which fixes the connected component of our chosen complex structure. The {\it monodromy group} is the image of this subgroup in $\OO(H^2 (M, \mathbb{Z}))$.

Denote by $\Hyp$ an infinite-dimensional space of all quaternionic triples $I, J, K$ on $M$ which are induced by some hyperk\"ahler structure, with the same $C^\infty$-topology of convergence with all derivatives. Identify $\Hyp_m= \Hyp /\SU(2)$ with the space of
all hyperk\"ahler metrics of fixed volume. 

Define the {\it Teichm\"uller space} $\Teich_h$ of hyper\"ahler structures
as the quotient $\Hyp_m / \Diff_0$. Define the period space of hyperk\"ahler
structures by the space $\Per_h = Gr_{ +++} (H^2 (M, \RR))$ of all positive oriented 3-dimensional
subspaces in $H^2 (M, \RR)$.

\begin{rem}
The period space $\Per_h$ is naturally diffeomorphic to $\SO(b_2 -3, 3)/\SO(3)\times \SO(b_2 -3)$. The map $\mathcal{P}er_h : \Teich_h \to \Per_h$ is the period map associating the 3-dimensional
space generated by the three K\"ahler forms $\omega_I, \omega_J, \omega_K$ to a hyperk\"ahler structure
$(M, I, J, K, g)$. This map by \cite[Theorem 4.9]{KM4} is an open embedding
for each connected component. Moreover, its image is the set of all spaces
$W \in \Per_h$ such that the orthogonal complement $W^\perp$ contains no MBM classes (see below).
\end{rem}

A non-zero negative rational homology class (1, 1)-class $z$ is
called {\it monodromy birationally minimal (MBM)} if for some isometry  $\gamma \in O(H^2 (M, \mathbb{Z}))$ belonging to the monodromy group, $\gamma(z)^{\perp} \subset H^{1,1} (M )$ contains a
face of the pull-back of the K\"ahler cone of one of birational models $M'$ of $M$. 

\begin{proof}[Proof of Proposition \ref{prop: b_2=5}]

For each primitive MBM class $r$,
denote by $S_r$ the set of all 3-planes $W \in \Gr_{+++}$ orthogonal to $r$. Consider the union $\cup_r S_r$ of this sets. Its complement in $\Gr_{+++}$ is identified to a connected component of the Teichm\"uller space by \cite[Theorem 4.9]{KM4}. So it is open. From the \cite[Theorem 1.7]{KM2} for $X = G/K$, where
$G = \SO(3, 2)$ and $K = \SO(3) \times \SO(2)$ \footnote{In the general case $G = \SO(3, b_2-3)$ and $K = SO(3) \times \SO(b_2-3)$}  it follows that monodromy group acts on the set of MBM classes with finite number of orbits. 

Recall that the monodromy acts by isometries, thus the square of a primitive MBM class in respect with the Beauvile-Bogomolov-Fujiki form on $M$ is bounded in a absolute value. This is key assumption in Amerik-Verbitsky's proof of Kawamata-Morrison cone conjecture. Indeed, the \cite[Theorem 6.6]{KM3} implies the finitness of orbits for the K\"ahler cone.

Consider the quotient $S=(\Pos(M) \cap \NS(M) \otimes \mathbb{R}) /\Gamma$, where $\Pos(M)$ is positive cone and $\Gamma$ is the Hodge monodromy group. Then by Borel and
Harish-Chandra theorem $S$ is a complete hyperbolic manifold of
finite volume. Since $\Aut(M)$ acts with finite number of orbits on $\Kah(M)$, then the image of $\Amp(M )$ in $S$ is a hyperbolic manifold $T$ with finite boundary. One can prove that $T$ admits decomposition by finitely many cells with finite piecewise geodesic
boundary. Finite polyhedral fundamental domain on the
ample cone of $M$ is obtained by suitable liftings of this cells. We refer the reader to \cite[Theorem 5.6]{KM} for the further details.
\end{proof}


\section{CAT(0) groups: definitions and applications}\label{section: metric}

In this subsection we first recall some basic definitions of the theory of CAT(0) spaces, and then show how to use them to prove our results.

\subsection{\bf CAT(0) spaces}

Let $(X,\dist)$ be a metric space. Recall that a geodesic segment joining two points $x,y\in X$ is the image of a path of length $\dist(x,y)$ joining $x$ and $y$. We write $[x,y]$ to denote {\it some} geodesic segment (which need not to be unique). A metric space is said to be geodesic if every two poins in $X$ can be joined by a geodesic. A {\it geodesic triangle} in $X$ consists of three points $x,y,z\in X$ and a choice of geodesic segments $[x,y]$, $[y,z]$ and $[x,z]$. 

A geodesic metric space $(X,\dist)$ is said to be a {\it CAT(0) space} if for every geodesic triangle $\Delta\subset X$ there exists a triangle $\Delta'\subset\EE^n$ (here and throughout the paper $\EE^n$ denotes the Euclidean $n$-space with a standard metric) with sides of the same length as the sides of $\Delta$, such that distances between points on $\Delta$ are less or equal to the distances between corresponding points on $\Delta'$. Informally speaking, this means that geodesic triangles in $X$ are <<not thicker>> than Euclidean ones.

\begin{figure}[h!]
	\centering
	\includegraphics[width=0.5\linewidth]{triangles.jpg}
	\label{pic:triangles}
\end{figure}

\begin{mydef}[\bf CAT(0) groups]
	Let $\Gamma$ be a group acting by isometries on a metric space $X$. This action is {\it proper} or {\it properly discontinous} if for each $x\in X$ there exists $r>0$ such that the set of $\gamma\in\Gamma$ with
	\[
	\gamma\cdot B(x,r)\cap B(x,r)\ne\varnothing
	\]
	is finite (here and throughout the paper $B(x,r)$ denotes an open ball with center $x$ and radius $r$). The action is {\it cocompact} is there exists a compact set $K\subset X$ such that $X=\Gamma\cdot K$. The action is called {\it geometric} if it is proper and cocompact. Finally, we say that $\Gamma$ is a {\it CAT(0) group} if it acts geometrically on a CAT(0) space.
\end{mydef}

Now we can state one of the main results of this paper.

\begin{thm}[Theorem A]\label{thm: hyperkahler are CAT(0)}
	Let $M$ be a projective hyperk\"{a}hler manifold. Then $\Aut^*(M)$ and $\Bir^*(M)$ are CAT(0) groups. The same holds for $\Aut(M)$ and $\Bir(M)$.
\end{thm}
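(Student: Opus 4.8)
The plan is to realize each group as a discrete, hence proper, group of isometries of a real hyperbolic space, and then replace that space by a $\Gamma$-invariant subspace with compact quotient that is still CAT(0). Since $M$ is projective, $\NS(M)_\RR$ carries the BBF form $q_{BBF}$ of signature $(1,\rho(M)-1)$, so one component $\Cone^+$ of the positive cone $\{v: q_{BBF}(v)>0\}$ projectivizes to a copy of real hyperbolic space $\HH^{\rho(M)-1}$. Both $\Aut^*(M)$ and $\Bir^*(M)$ lie in $\OO(\NS(M),\ZZ)$ and preserve $\Cone^+$ (it contains an ample class), hence act on $\HH^{\rho(M)-1}$ by isometries. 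This action is \emph{proper}: $\OO(\NS(M),\ZZ)$ is discrete in $\OO(\NS(M)_\RR,q_{BBF})$, and a discrete group of isometries of hyperbolic space acts properly discontinuously. Since $\HH^{\rho(M)-1}$ is CAT($-1$), hence CAT(0), only cocompactness remains to be arranged.

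For cocompactness I would invoke Theorem \ref{thm: cone theorems}. For $\Gamma=\Aut^*(M)$ the cone conjecture furnishes a rational polyhedral fundamental domain $\Pi\subset\Amp(M)^+$, and for $\Gamma=\Bir^*(M)$ one on $\Mov(M)^+$; in either case the projectivization $\mathcal{D}=\PP(\Pi)\cap\HH^{\rho(M)-1}$ is a hyperbolic polyhedron with finitely many sides, and the image $Y\subset\HH^{\rho(M)-1}$ of the convex cone $\Amp(M)^+$ (resp. $\Mov(M)^+$) is a convex, $\Gamma$-invariant subset admitting $\mathcal{D}$ as a fundamental domain. As a convex subset of a CAT(0) space, $Y$ is itself CAT(0). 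If $\mathcal{D}$ were compact we would be finished immediately, since $Y/\Gamma\cong\mathcal{D}$ would then be compact.

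The main obstacle is precisely that $\mathcal{D}$ need not be compact: the cone $\Amp(M)^+$ (resp. $\Mov(M)^+$) may contain rational rays lying on $\partial\Cone^+$ --- nef classes of BBF-square zero, such as the fibre classes of Lagrangian fibrations --- and these project to ideal vertices (cusps) of $\mathcal{D}$ on $\partial\HH^{\rho(M)-1}$. To repair cocompactness I would pass to the theory of geometrically finite groups (this is the reason for the bookkeeping recalled in Remark \ref{rem: domain properties}): each cusp is stabilized by a subgroup acting on a horosphere, and one truncates the space by deleting a $\Gamma$-equivariant family of disjoint open horoballs centred at the ideal vertices. The resulting truncated space $Y'$ has compact quotient $Y'/\Gamma$, and --- this is the point that needs genuine verification rather than mere finiteness of the domain --- carries an intrinsic length metric that remains CAT(0). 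The delicate checks (local finiteness and exactness of the Dirichlet/Looijenga domain, disjointness of the horoballs, and the nonpositive curvature of the truncated pieces) are exactly what one expects to occupy the step referenced as Section \ref{subsec: step 1}.

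Finally I would transfer the conclusion from the starred groups to $\Aut(M)$ and $\Bir(M)$. By Proposition \ref{prop: kernel is finite} the homomorphism $\Phi_{\NS}$ has finite kernel, so $\Aut(M)$ and $\Bir(M)$ are finite extensions of $\Aut^*(M)$ and $\Bir^*(M)$. A group $G$ with a finite normal subgroup $K$ such that $G/K$ acts geometrically on a CAT(0) space $X$ itself acts geometrically on $X$ through the quotient map: the action is still isometric and cocompact, and it stays proper because each point stabilizer in $G$ is the preimage of a finite stabilizer in $G/K$, hence finite. Therefore $\Aut(M)$ and $\Bir(M)$ are CAT(0) groups as well, and their finite presentation follows from the \v{S}varc-Milnor lemma.
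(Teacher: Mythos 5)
Your overall strategy is the one the paper follows: project the positive cone of $(\NS(M)_\RR,q_{BBF})$ to hyperbolic space, use the cone conjecture to get a finitely-sided fundamental polyhedron, truncate horoballs at the cusps to restore cocompactness while keeping CAT(0) (via \cite[II.11.27]{BH}), and descend to $\Aut(M)$, $\Bir(M)$ through the finite kernel of $\Phi_{\NS}$. However, there is a genuine gap in your cocompactness step. You work with $Y=\PP(\Amp(M)^+)\cap\HH^{\rho-1}$ and assert that the only failure of compactness of $\mathcal D$ comes from ideal vertices that are parabolic cusps, so that deleting equivariant horoballs yields a compact quotient. That is not automatic: the closure $\overline{\mathcal D}$ can meet $\partial\overline{\BB}^{\,\rho-1}$ in points that are not fixed points of parabolic elements of $\Gamma$ --- indeed in points that are not limit points of $\Gamma$ at all (the extreme case being $\Gamma$ finite with $\mathcal D$ all of $\HH^{\rho-1}$), and no choice of horoballs repairs compactness there. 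The paper resolves this by first intersecting everything with the convex hull $C(\Gamma)$ of the limit set, and then proving (Lemma \ref{lem: apanasov lemma}, via exclusion of conical limit points by local finiteness and a Bieberbach argument at each remaining boundary point) that $\overline{\Pi}\cap L(\Gamma)$ is a \emph{finite set of cusped limit points}; only then does deleting finitely many horocusp regions make $(\Pi\cap C(\Gamma))\setminus V$ compact (Proposition \ref{prop: key prop}). Your sketch defers exactly this to ``the delicate checks,'' but the restriction to $C(\Gamma)$ is a missing idea, not a routine verification: without it the claimed compactness of $Y'/\Gamma$ is false in general.

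A second, smaller omission: the hyperbolic machinery (Bieberbach, cusped regions, geometric finiteness) is set up for $\HH^n$ with $n\geqslant 2$, i.e.\ $\rho(M)\geqslant 3$. The paper treats $\rho(M)=2$ separately, using the Lazi\'c--Peternell result that $\Gamma^+$ is trivial or $\ZZ$, so that $\Aut(M)$ and $\Bir(M)$ are virtually cyclic and hence CAT(0) by elementary means; your argument should either cover this case or dispose of it the same way. Your final descent from $\Aut^*(M)$ to $\Aut(M)$ via the finite kernel is correct and matches the paper.
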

\begin{proof}
	See after Theorem \ref{thm: main technical}.
\end{proof}

The following properties of CAT(0) groups will be crucial for us.

\begin{thm}[{\cite[III.$\Gamma$, Theorem 1.1]{BH}}]\label{thm: finiteness properties}
	Every CAT(0) group $\Gamma$ satisfies the following properties:
	\begin{enumerate}
		\item $\Gamma$ finitely presented;
		\item $\Gamma$ has finitely many conjugacy classes of finite subgroups;
		\item Every solvable subgroup of $\Gamma$ has an abelian subgroup of finite index;
		\item Every abelian subgroup of $\Gamma$ is finitely generated.
	\end{enumerate}
\end{thm}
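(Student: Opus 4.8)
The plan is to derive all four properties from the single hypothesis that $\Gamma$ acts properly and cocompactly by isometries on a complete CAT(0) space $X$, exploiting two structural features of such spaces: they are uniquely geodesic and contractible, and every bounded subset has a unique circumcenter. Throughout I would fix a compact set $K$ with $\Gamma\cdot K=X$ and record that properness together with cocompactness makes the set $F=\{\gamma\in\Gamma:\gamma K\cap K\ne\varnothing\}$ finite; this finite set will carry most of the weight in (1) and (2).

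First I would dispatch the two softer items. Since $X$ is CAT(0) it is contractible, hence simply connected, and the \v{S}varc--Milnor lemma shows that $\Gamma$ is quasi-isometric to $X$ and is generated by $F$; finite presentation (1) then follows from the standard fact that a group acting geometrically on a simply connected geodesic space is finitely presented, the defining relations being read off from the simple connectivity of $X$ via a complex built out of translates of $K$. For (2) I would invoke the Bruhat--Tits fixed point theorem: a finite subgroup $G\le\Gamma$ has a bounded orbit, and the circumcenter of that orbit is a point fixed by $G$. Using cocompactness to move this fixed point into $K$ by conjugation, one sees that every finite subgroup is conjugate to one fixing a point of $K$, and such a subgroup is contained in $F$. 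As $F$ is finite it has only finitely many subsets, so there are only finitely many conjugacy classes of finite subgroups.

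The remaining two items rest on the Flat Torus Theorem \cite{BH}, and the first step there is to observe that cocompactness forces every element of $\Gamma$ to act as a semisimple isometry, i.e. to attain its translation length on a nonempty min-set. Granting this, a free abelian subgroup $\ZZ^n\le\Gamma$ preserves a convex subspace isometric to $Y\times\EE^n$ and acts on the Euclidean factor by translations. For (4) I would take an arbitrary abelian subgroup $A$ and argue that properness makes its translation action discrete, so that the flats it can occupy have bounded dimension and the associated translation lattice is a discrete subgroup of some $\EE^n$, hence finitely generated; it is precisely properness that excludes dense phenomena such as the action of $\QQ$ on $\RR$, and this is what forces $A$ itself to be finitely generated. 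Property (3) I would obtain from the Solvable Subgroup Theorem \cite{BH}: a virtually solvable group acting properly by semisimple isometries on a complete CAT(0) space is virtually abelian. I would prove this by induction on the derived length, using the Flat Torus Theorem to understand each abelian layer and the normalizer of its invariant flat to pass to the next. I expect the Flat Torus Theorem, and the Solvable Subgroup Theorem built upon it, to be the main obstacle: showing that the min-sets of commuting semisimple isometries meet and split off a genuine Euclidean factor, compatibly across the whole group, is the technical heart of the argument, whereas (1) and (2) are comparatively formal.
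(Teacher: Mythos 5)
The paper gives no proof of this statement at all: it is quoted verbatim from Bridson--Haefliger \cite{BH} (III.$\Gamma$, Theorem 1.1), and your proposal is a faithful reconstruction of exactly the argument given there --- \v{S}varc--Milnor plus contractibility for (1), the Bruhat--Tits circumcentre and the finite set $F=\{\gamma:\gamma K\cap K\ne\varnothing\}$ for (2), and semisimplicity from cocompactness feeding the Flat Torus and Solvable Subgroup Theorems for (3) and (4). So your approach coincides with the paper's (i.e.\ with the cited source's) proof, and the sketch is correct, including your correct identification of properness as the ingredient ruling out non-finitely-generated abelian subgroups such as $\QQ$.
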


We postpone the construction of a CAT(0) space for $\Aut(M)$ and $\Bir(M)$ until Section \ref{sec: construction}. Below we deduce some corollaries from Theorem \ref{thm: hyperkahler are CAT(0)}.

\begin{sled}[Theorem A]
	Let $M$ be a hyperk\"{a}hler manifold. Then the groups $\Aut(M)$ and $\Bir(M)$ are finitely presented.
\end{sled}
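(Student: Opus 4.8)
The plan is to split the argument according to whether $M$ is projective, since the two regimes are controlled by completely different mechanisms. In the \emph{projective case} I would simply assemble the two inputs already at hand: by Theorem~\ref{thm: hyperkahler are CAT(0)} (Theorem A) the groups $\Aut(M)$ and $\Bir(M)$ are CAT(0) groups, and by Theorem~\ref{thm: finiteness properties}(1) every CAT(0) group is finitely presented. This disposes of the projective case in a single line, modulo Theorem A itself, whose construction of the ambient CAT(0) space is postponed to Section~\ref{sec: construction}.

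In the \emph{non-projective case} Theorem A no longer applies, so instead I would invoke Oguiso's structural result, Theorem~\ref{thm: non-projective}: when $M$ is non-projective, $\Bir(M)$ and $\Aut(M)$ are almost abelian of finite rank. It then remains to verify the purely group-theoretic fact that any almost abelian group of finite rank is finitely presented. Let $G$ be such a group, with normal finite-index subgroup $G'$ fitting into an exact sequence $\id\to K\to G'\to\ZZ^r\to 0$ with $K$ finite. Since $K$ is finite (hence finitely generated) and the quotient $\ZZ^r$ is finitely presented, the extension $G'$ is finitely presented; and since $G'$ has finite index in $G$, finite presentability passes up from $G'$ to $G$. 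Thus $G$ is finitely presented. One can even unify the two cases: an almost abelian group of finite rank is finitely generated and virtually $\ZZ^r$, so it is itself a CAT(0) group and the conclusion again follows from Theorem~\ref{thm: finiteness properties}(1); but the elementary extension argument above is the cleanest and avoids any claim about geometric actions for groups with torsion.

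I do not expect a genuine obstacle in the corollary itself. Both cases reduce immediately to results stated above, and the only thing that needs checking — that almost abelian of finite rank implies finitely presented — is standard, relying solely on the stability of finite presentability under extensions with finite kernel and under passage to finite-index overgroups. The real content of the statement is therefore entirely concentrated in Theorem A, and in particular in the construction of the CAT(0) space on which $\Aut(M)$ and $\Bir(M)$ act geometrically; that construction, rather than the deduction here, is where the work lies.
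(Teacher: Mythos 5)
Your proof matches the paper's: the paper likewise splits into the non-projective case (Theorem \ref{thm: non-projective}, almost abelian hence finitely presented) and the projective case (Theorem A plus Theorem \ref{thm: finiteness properties}(1)), and you merely spell out the routine verification that almost abelian of finite rank implies finitely presented. One small caution: a finitely generated kernel with finitely presented quotient does not in general yield a finitely presented extension --- you need the kernel itself to be finitely presented, which of course holds here since $K$ is finite.
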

\begin{proof}
	If $M$ is non-projective, then the groups $\Aut(M)$ and $\Bir(M)$ are almost abelian by Theorem \ref{thm: non-projective}, hence finitely presented. For projective hyperk\"{a}hler manifolds the statement follows from Theorem \ref{thm: finiteness properties} (1). 
\end{proof}

\begin{sled}[Theorem C]\label{sled: BFS}
	Let $M$ be a projective hyperk\"{a}hler manifold. Then the groups $\Aut(M)$ and $\Bir(M)$ have finitely many conjugacy classes of finite subgroups. In particular, there exists a constant $B=B(M)$ such that for every finite subgroup $G\subset \Bir(M)$ one has $|G|\leqslant B$ (i.e. $\Bir(M)$ has bounded finite subgroups).
\end{sled}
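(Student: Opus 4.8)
The plan is to deduce this statement directly from Theorem~\ref{thm: hyperkahler are CAT(0)} together with the abstract finiteness properties of CAT(0) groups recorded in Theorem~\ref{thm: finiteness properties}. First I would invoke Theorem~A to conclude that both $\Aut(M)$ and $\Bir(M)$ are CAT(0) groups. Then the finiteness of the number of conjugacy classes of finite subgroups is precisely part~(2) of Theorem~\ref{thm: finiteness properties}, so the first assertion requires no further argument. The entire geometric content therefore sits in Theorem~A, and the corollary is extracted from it formally.

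For the quantitative ``in particular'' clause I would argue as follows. Conjugate subgroups have equal cardinality: if $G' = \gamma G \gamma^{-1}$ for some $\gamma \in \Bir(M)$, then $|G'| = |G|$. Hence the order of a finite subgroup of $\Bir(M)$ depends only on its conjugacy class. Since by the first part there are only finitely many such classes, the set of orders $\{\,|G| : G \subset \Bir(M)\ \text{finite}\,\}$ is finite. Setting $\B = \B(M)$ to be its maximum then yields $|G| \leqslant \B$ for every finite subgroup $G \subset \Bir(M)$, and the same argument applied to $\Aut(M) \subseteq \Bir(M)$ gives the corresponding bound there.

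Since the proof is this short, there is no genuine obstacle to carry out; the only point worth flagging is a conceptual one, namely \emph{why} one cannot shortcut the argument. One is tempted to observe that, by Proposition~\ref{prop: kernel is finite}, the group $\Bir(M)$ maps to $\GL(\NS(M)) \cong \GL_n(\ZZ)$ with finite kernel, and to invoke the classical finiteness of conjugacy classes of finite subgroups in $\GL_n(\ZZ)$. As Remark~\ref{rem: platonov} explains, this reasoning fails: finitely generated subgroups of $\GL_n(\ZZ)$ may contain infinitely many conjugacy classes of finite-order elements. It is therefore essential that we are working with the \emph{whole} arithmetic group structure encoded by the CAT(0) action rather than with an arbitrary finitely generated linear subgroup, and this is exactly what Theorem~A supplies.
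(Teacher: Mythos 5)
Your proof is correct and is exactly the paper's argument: the paper also deduces the corollary directly from Theorem~A via part~(2) of Theorem~\ref{thm: finiteness properties}, with the bound $\B(M)$ following from the elementary observation that conjugate subgroups have equal order. Your closing remark about why the $\GL_n(\ZZ)$ shortcut fails matches the paper's Remark~\ref{rem: platonov}.
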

\begin{proof}
	Follows from Theorem \ref{thm: finiteness properties} (2).
\end{proof}

Of course, the second part of this statement (as well as Corollary \ref{sled: Burnside} below) can be obtained using Minkowski's Theorem, which states that $\GL_n(\QQ)$ has bounded finite subgroups (see e.g. \cite{Serre}). However, as we mentioned in Remark \ref{rem: platonov} the finiteness of conjugacy classes of finite subgroups is a much more subtle issue.

\begin{rem}\label{rem: Costya-Prokh}
	One can compare this result with \cite[Theorem 1.8]{PS-Jordan} which states that $\Bir(X)$ has bounded finite subgroups provided that $X$ is an irreducible algebraic variety which is non-uniruled and has $h^1(X,\OOO_X)=0$. The latter condition is clearly true for any projective hyperk\"{a}hler manifold $X$, and the former one holds since complex uniruled varieties have Kodaira dimension $-\infty$, while  Calabi-Yau manifolds have Kodaira dimension zero.
\end{rem}

\begin{rem}\label{rem: K3 finite}
	In dimension two, i.e. for projective K3 surfaces, it is known that the orders of their finite automorphism groups are bounded by $3840$ (and this bound is sharp) \cite{Kondo}. 
	
\end{rem}


Recall that a torsion group is a group in which each element has finite order. In general it is an open question whether torsion subgroups of any CAT(0) group are always finite. However in our case the answer to this question is positive. 

\begin{sled}[Burnside property]\label{sled: Burnside}
	Let $M$ be a projective hyperk\"{a}hler manifold. Then every 
	torsion subgroup $G\subseteq\Bir(M)$ is finite.
\end{sled}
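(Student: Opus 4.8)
The plan is to reduce the statement to two facts already at our disposal: the near-linearity of $\Bir(M)$ supplied by $\Phi_{\NS}$, and the uniform bound on finite subgroups from Corollary \ref{sled: BFS}. Let $G\subseteq\Bir(M)$ be a torsion subgroup; I want to show $G$ is finite. First I would pass to the image under the homomorphism $\Phi_{\NS}\colon\Bir(M)\to\GL(\NS(M))$. By Proposition \ref{prop: kernel is finite} the kernel $K$ of $\Phi_{\NS}$ is finite, so $G\cap K$ is a finite normal subgroup of $G$, and $G$ is finite if and only if its image $\Phi_{\NS}(G)$ is.

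Since $\NS(M)$ is free of finite rank $n=\rho(M)$, the image $\Phi_{\NS}(G)$ is a torsion subgroup of $\GL_n(\ZZ)\subset\GL_n(\CC)$. The central input is the Burnside--Schur theorem: every torsion subgroup of $\GL_n(\CC)$ is locally finite, i.e.\ each of its finitely generated subgroups is finite. Hence $\Phi_{\NS}(G)$ is locally finite, and because $K$ is finite the group $G$ itself is locally finite: any finitely generated $H\subseteq G$ maps onto a finitely generated, hence finite, subgroup of $\Phi_{\NS}(G)$, while $H\cap K$ is finite, so $H$ is finite.

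It then remains to combine local finiteness with boundedness. By Corollary \ref{sled: BFS} there is a constant $B=B(M)$ with $|F|\leqslant B$ for every finite subgroup $F\subset\Bir(M)$. If $G$ were infinite, then, being locally finite, it would contain a strictly increasing chain of finite subgroups $F_1\subsetneq F_2\subsetneq\cdots$, obtained by repeatedly adjoining an element outside the current subgroup and taking the finite group it generates; their orders would then grow without bound, contradicting $|F_i|\leqslant B$. Therefore $G$ is finite.

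The point I expect to be the genuine obstacle is conceptual rather than computational: the uniform bound $B$ by itself does \emph{not} force finiteness, since Tarski monsters are infinite torsion groups all of whose finite subgroups have bounded order. The indispensable extra ingredient is local finiteness, which here is not a CAT(0) phenomenon at all but comes for free from the (virtual) linearity of $\Bir(M)$ via Burnside--Schur. Recognizing that this, and not any argument intrinsic to the CAT(0) action, is the mechanism that answers the otherwise open torsion question is the main step; the rest is bookkeeping with the finite kernel $K$.
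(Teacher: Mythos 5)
Your proof is correct and follows essentially the same route as the paper: reduce modulo the finite kernel of $\Phi_{\NS}$ (Proposition \ref{prop: kernel is finite}), feed in the bound on finite subgroups from Corollary \ref{sled: BFS}, and close with a Burnside-type theorem for linear groups. The only (minor) divergence is in which classical theorem carries the final step: the paper turns the subgroup bound into a bound on the \emph{exponent} of $\Phi_{\NS}(G)$ and invokes Burnside's theorem that a linear group of bounded exponent is finite, whereas you invoke the Burnside--Schur local-finiteness theorem for torsion linear groups and then play an ascending chain of finite subgroups against the uniform bound $B$. Both are valid, and your closing remark --- that the bound alone cannot suffice (Tarski monsters) and that linearity, not the CAT(0) action, supplies the missing local finiteness --- is exactly the right diagnosis of where the content lies.
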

\begin{proof}
	Put $G^*=\Phi_{\NS}(G)$ and $G_0=G\cap\ker\Phi_{\NS}$. One has a short exact sequence
	\[
	1\to G_0\to G\to G^*\to 1
	\]
	with $G_0$ finite, and $G^*$ a torsion group. By Theorem \ref{thm: finiteness properties} (2) (or Corollary \ref{sled: BFS}) the group $G^*$ has bounded exponent, i.e. there exists $d\in\ZZ_{>0}$ such that the order of any $g\in G^*$ is $\leqslant d$. Since $G^*$ is linear, it must be finite by Burnside's theorem. Therefore $G$ is finite too.
\end{proof}

\subsection{Tits' alternative}

In this subsection we show how our method implies a strong form of Tits' alternative (Theorem B) for projective hyperk\"{a}hler manifolds. In general this is a well-known open question whether CAT(0) groups always satisfy Tits' alternative, but in our case the usual Tits' alternative for $\GL_n(\QQ)$ and some properties of CAT(0) groups give even stronger restrictions than in the classical settings.

The heart of the proof of \cite[Theorem 1.1]{Og} was the fact that a virtually solvable subgroup of $\OO(L)$, where $L$ is a hyperbolic lattice of finite rank, must be almost abelian of finite rank. The proof of the latter involves Lie-Kolchin Theorem and various properties of Salem polynomials. In our case the key ingredient of Oguiso's proof follows from the fact that $\Bir(M)$ is a CAT(0) group. But in fact we are able to prove something stronger, namely, that in the first case of Tits alternative our group is just $\ZZ^n$ up to finite index.

\begin{thm}[Theorem B]\label{thm: Tits text}
	Let $M$ be a projective hyperk\"{a}hler manifold, and $G\subseteq\Bir(M)$ be a subgroup. Then
	\begin{enumerate}
		\item either $G$ contains a finite index subgroup isomorphic to $\ZZ^n$;
		\item or $G$ contains a non-commutative free group.
	\end{enumerate}
In particular, so are $\Bir(M)$ and $\Aut(M)$.
\end{thm}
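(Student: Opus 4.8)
The plan is to combine the classical Tits alternative for the linear group $\Bir^*(M)\subseteq\GL(\NS(M))$ with the structural properties of CAT(0) groups recorded in Theorem \ref{thm: finiteness properties}. The key point is that we \emph{cannot} invoke a Tits alternative for CAT(0) groups directly, since this is open in general; the linearity coming from $\Phi_{\NS}$ is what makes the argument work, and the CAT(0) input is then used only to sharpen ``virtually solvable'' to ``virtually $\ZZ^n$''.

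First I would pass to the linear quotient. By Proposition \ref{prop: kernel is finite} the homomorphism $\Phi_{\NS}\colon\Bir(M)\to\GL(\NS(M))$ has finite kernel, so for a subgroup $G\subseteq\Bir(M)$ I set $G^*=\Phi_{\NS}(G)$ and $G_0=G\cap\Ker\Phi_{\NS}$, obtaining a short exact sequence
\[
1\to G_0\to G\to G^*\to 1
\]
with $G_0$ finite and $G^*\subseteq\GL(\NS(M))$ a linear group over $\QQ$. Since $\QQ$ has characteristic zero, Tits' alternative applies to $G^*$ with no finite generation hypothesis: either $G^*$ contains a non-abelian free group, or $G^*$ is virtually solvable.

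In the first case, let $F\subseteq G^*$ be a non-abelian free subgroup and let $\widetilde F\subseteq G$ be its preimage, so that $1\to G_0\to\widetilde F\to F\to 1$ is exact. Because $F$ is free, this extension splits (lift a free basis), so $F$ embeds into $\widetilde F\subseteq G$ and we are in alternative (2). In the second case $G$ is an extension of the virtually solvable group $G^*$ by the finite group $G_0$, hence $G$ is itself virtually solvable and contains a finite index solvable subgroup $S$. Now $S$ is a solvable subgroup of the CAT(0) group $\Bir(M)$ (Theorem A), so by Theorem \ref{thm: finiteness properties}(3) it has an abelian subgroup $A$ of finite index; then $A$ has finite index in $G$. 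By Theorem \ref{thm: finiteness properties}(4) the abelian group $A$ is finitely generated, hence $A\cong\ZZ^n\oplus T$ with $T$ finite, so the free part $\ZZ^n$ has finite index in $A$ and therefore in $G$, giving alternative (1). The final assertion follows by taking $G=\Bir(M)$ and $G=\Aut(M)$.

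The step I expect to need the most care is the invocation of Tits' alternative in exactly the right form: over a field of characteristic zero, where finite generation of $G^*$ is \emph{not} assumed, since $G$ is an arbitrary (possibly non-finitely-generated) subgroup of $\Bir(M)$. The only other point requiring attention is that the free subgroup survives the passage through the finite kernel $G_0$, which is handled cleanly by the projectivity of free groups (splitting of the extension $\widetilde F$); everything else is a formal consequence of the two CAT(0) finiteness properties.
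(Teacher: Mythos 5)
Your strategy is the same as the paper's: push $G$ through $\Phi_{\NS}$ (finite kernel by Proposition \ref{prop: kernel is finite}), apply the classical Tits alternative to the linear group $G^*$ in characteristic zero (where, as you correctly note, no finite generation is needed), and then use Theorem \ref{thm: finiteness properties}(3)--(4) to upgrade ``virtually solvable'' to ``virtually $\ZZ^n$''. Your treatment of alternative (2) is actually \emph{more} complete than the paper's: the paper argues only by assuming $G^*$ has no free subgroup, leaving implicit the lifting of a free subgroup of $G^*$ back to $G$, which you handle cleanly via projectivity of free groups.

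The one place where you assert something non-trivial without proof is the sentence ``$G$ is an extension of the virtually solvable group $G^*$ by the finite group $G_0$, hence $G$ is itself virtually solvable.'' This is true, but it is not formal: a finite-by-solvable group need not be solvable when the finite kernel is non-solvable (think of $\Alt_5\times\ZZ$, which is finite-by-abelian but not solvable), so you cannot simply say that an extension of solvable by finite is solvable and pass to a finite-index subgroup of the quotient. This is exactly the step on which the paper spends its effort: it replaces $S=\Phi_{\NS}^{-1}(S^*)$ by the centralizer $C=C_S(N)$ of the finite kernel, which has finite index in $S$ because $S/C$ embeds in the finite group $\Aut(N)$; then $C\cap N$ is central (hence abelian) in $C$ and $C/(C\cap N)$ is solvable, so $C$ itself is solvable. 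With that one-line repair inserted, your argument is complete and coincides with the paper's.
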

\begin{proof}
	Put $G^*=\Phi_{\NS}(G)$. Then one has a short exact sequence of groups
	\[
	1\to N\to G\to G^*\to 1,
	\]
	with $N$ being a finite group by Proposition \ref{prop: kernel is finite}. Assume that $G^*$ does not contain a non-abelian
	free subgroup. Then by usual Tits' alternative for $\GL(\NS(M)\otimes\RR)$ the group $G^*$ has a solvable subgroup $S^*$ of finite index. Put $S=\Phi^{-1}_{\NS}(S^*)$. We have a short exact sequence
	\[
	1\to N\to S\to S^*\to 1
	\]
	with $[G:S]<\infty$, $N$ finite, and $S^*$ solvable. The centralizer $C=C_{S}(N)$ of $N$ in $S$ has finite index in $S$ (indeed, $S$ acts on $N$ by conjugation, which gives a homomorphism $S\to\Aut(N)$ with kernel $C_{S}(N)$ and $\Aut(N)$ a finite group). Thus we have an extension
	\[
	1\to A\to C\to C^*\to 1
	\]
	with $A=N\cap C$ abelian and $C^*$ solvable group. Clearly $[G:C]<\infty$. Since both $A$ and $C^*$ are solvable, the group $C\subset\Bir(M)$ is solvable.  By Theorem \ref{thm: finiteness properties} (3) and (4) it then contains $F\cong\ZZ^n$ with $[C:F]<\infty$. Hence $G$ contains a finite index subgroup isomorphic to $\ZZ^n$.
\end{proof}

\subsection{Some applications to dynamics}

Let $(X,\dist)$ be a metric space and $f\in\Isom(X)$ be its isometry. Then one can consider the {\it displacement function} of $f$
\[
d_f: X\to\RR_{\geqslant 0},\ \ \ d_f(x)=\dist(f(x),x).
\]
The {\it translation length} of $f$ is the number $\|f\|=\inf\{d_f(x):\ x\in X\}$. The set of points where $d_f$ attains the infimum is denoted by $\Min(f)$. If $d_f$ attains a strictly positive minimum, then $f$ is called {\it loxodromic}; if this minimum is 0 (i.e. $f$ has a fixed point), then $f$ is called {\it elliptic}; if $d_f$ does not attain the minimum (i.e. $\Min(f)=\varnothing$), then $f$ is called {\it parabolic}. Elliptic and loxodromic isometries are also called {\it semi-simple}. In the case $X=\HH^n$ these definitions agree with the old ones.  

Now let $M$ be a projective hyperk\"{a}hler manifold, and $f\in\Bir(M)$ be birational automorphism. According to the action of $f^*$ on the corresponding hyperbolic space $(\NS(M)_\RR,q_{BBF})$ one can classify $f$ as elliptic, parabolic or loxodromic. Denote by $\XX_M$ the CAT(0) space constructed in Section \ref{sec: construction}, i.e. the space on which $\Bir(M)$ acts properly and cocompactly by isometries of $\XX_M$. Then one has a group homomorphism
\[
\Theta_M: \Bir(M)\to\Isom(\XX_M).
\]
We should warn the reader that in general $\Theta_M$ does not preserve\footnote{It is not surprising since the metric on $\XX_M$ is not the same as on the initial hyperbolic space.} the type of an isometry. In fact, $\Theta(\Bir(M))$ does not contain parabolic isometries by \cite[II.6.10 (2)]{BH}. 

\begin{lem}\label{lem: preserving of types}
	The images of $M$-loxodromic and $M$-parabolic birational automorphisms under $\Theta_M$ are $\XX_M$-loxodromic. The images of $M$-elliptic elements are $\XX_M$-elliptic.  In particular, $\Theta_M$ maps semi-simple isometries to semi-simple ones.
\end{lem}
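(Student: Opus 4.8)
The plan is to route everything through a single invariant, namely the order of $f$, and to show that for $f\in\Bir(M)$ the three conditions ``$f$ is $M$-elliptic'', ``$f$ has finite order'' and ``$\Theta_M(f)$ is $\XX_M$-elliptic'' are pairwise equivalent; the remaining claims then follow because a geometric action on a complete CAT(0) space has no parabolic elements. First I would record the two ``global'' inputs. On one side, $\Bir^*(M)=\Phi_{\NS}(\Bir(M))$ preserves the lattice $\NS(M)$, hence is a \emph{discrete} subgroup of $\OO(\NS(M)_\RR,q_{BBF})$ and acts properly discontinuously on the associated hyperbolic space $\HH$; in particular its point stabilizers in $\HH$ are finite. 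On the other side, the $\Theta_M$-action of $\Bir(M)$ on $\XX_M$ is proper, so point stabilizers in $\Bir(M)$ are finite as well, and by \cite[II.6.10(2)]{BH} (as already noted in the text) every $\Theta_M(f)$ is semi-simple, i.e.\ $\XX_M$-elliptic or $\XX_M$-loxodromic but never parabolic.

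Next I would prove that $f$ is $M$-elliptic if and only if $f$ has finite order. If $f$ is $M$-elliptic, then $f^*$ fixes a point of $\HH$, so $f^*$ lies in a finite point stabilizer of the discrete group $\Bir^*(M)$ and therefore has finite order; since $\ker\Phi_{\NS}$ is finite by Proposition \ref{prop: kernel is finite}, $f$ itself has finite order. Conversely, if $f$ has finite order then so does $f^*$, and a finite-order isometry of the complete CAT(0) space $\HH$ fixes the circumcenter of any of its orbits (which are finite, hence bounded) by the Bruhat--Tits fixed point theorem \cite{BH}; thus $f^*$ is elliptic and $f$ is $M$-elliptic. The identical circumcenter argument carried out in $\XX_M$ shows that a finite-order $\Theta_M(f)$ is $\XX_M$-elliptic, while properness yields the converse: if $\Theta_M(f)$ is $\XX_M$-elliptic it fixes some $p\in\XX_M$, hence $f$ lies in the finite stabilizer of $p$ and so has finite order. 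Consequently finite order of $f$ is equivalent both to $f$ being $M$-elliptic and to $\Theta_M(f)$ being $\XX_M$-elliptic.

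Combining these equivalences finishes the proof. If $f$ is $M$-elliptic, it has finite order, hence $\Theta_M(f)$ is $\XX_M$-elliptic. If instead $f$ is $M$-loxodromic or $M$-parabolic, then $f^*$ has infinite order, since loxodromic and parabolic isometries of $\HH$ admit no nontrivial periodic points; therefore $f$ is not $M$-elliptic and has infinite order, so $\Theta_M(f)$ is not $\XX_M$-elliptic, and being semi-simple it must be $\XX_M$-loxodromic. The final assertion about semi-simple isometries is then immediate. The only point demanding genuine care is the bookkeeping across the three actions --- of $f$ on $M$, of $f^*$ on $\HH$, and of $\Theta_M(f)$ on $\XX_M$ --- and in particular ensuring that the metric distortion introduced by passing to $\XX_M$ (which is precisely why $\Theta_M$ need not preserve isometry types) becomes harmless once every condition is rephrased in terms of the order of $f$; the finiteness of $\ker\Phi_{\NS}$ together with the properness of both actions is exactly what legitimizes this translation.
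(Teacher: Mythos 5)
Your proof is correct and follows essentially the same route as the paper's: both arguments reduce every classification to the single invariant ``order of $f$'', using discreteness of $\Bir^*(M)$ (lattice preservation) and properness of the two actions to identify ``elliptic'' with ``finite order'' on each side, and the absence of parabolics in a geometric CAT(0) action to force infinite-order elements to become loxodromic. The only cosmetic difference is that you make the three equivalences explicit (via Bruhat--Tits circumcenters and finite stabilizers) where the paper argues by contradiction and invokes \cite[II.6.7]{BH} to pass from $f^n$ elliptic to $f$ elliptic; the mathematical content is the same.
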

\begin{proof}
	First note that if a group $\Gamma$ acts geometrically on a proper CAT(0) space then $\gamma\in\Gamma$ has finite order if and only if $\gamma$ is elliptic. 
	
	Let $f\in\Bir(M)$ be of infinite order, i.e. either $M$-loxodromic or $M$-parabolic. Then, as was noticed above, $\Theta_M(f)$ is either $\XX_M$-loxodromic, or $\XX_M$-elliptic. In the latter case $\Theta_M(f)^n=\id$ for some $n>0$. Thus $f^n\in\ker\Theta_M$, i.e. $f^n$ acts as identity on $\XX_M$. But this also means that $f$ has a fixed point locus on the underlying hyperbolic space $(\NS(M)_\RR,q_{BBF})$, i.e. $f^n$ is $M$-elliptic. By \cite[II.6.7]{BH} we have that $f$ must be $M$-elliptic too, contradiction. Finally, the image of an element of finite order is of finite order, hence $M$-elliptic elements map to $\XX_M$-elliptic elements.
\end{proof}

Below we shall use the following important {\bf Flat Torus Theorem}.
\begin{theorem*}[{\cite[II.7.1]{BH}}]
	Let $A$ be a free abelian group of rank $n$ acting properly by semi-simple isometries on a CAT(0) space $X$. Then:
	\begin{enumerate}
		\item $\Min(A)=\cap_{\alpha\in A}\Min(\alpha)$ is non-empty and splits as a product $Y\times\EE^n$;
		\item Every element $\alpha\in A$ leaves $\Min(A)$ invariant and respects the product decomposition; $\alpha$ acts as the identity on $Y$ and as a translation on $\EE^n$;
		\item If a finitely generated subgroup $\Gamma\subset\Isom(X)$ normalizes $A$, then $\Gamma$ has a subgroup of finite index that contains $A$ as a direct factor.
	\end{enumerate}
\end{theorem*}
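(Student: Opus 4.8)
The plan is to prove the three assertions by induction on the rank $n$, building $\Min(A)$ one flat direction at a time out of the structure theory of a single semi-simple isometry. First I would record the standard facts I intend to use freely: the displacement function $d_\gamma$ of any isometry is convex, so $\Min(\gamma)$ is a closed convex (hence itself complete CAT(0)) subset; for a single hyperbolic isometry $\gamma$ one has a canonical splitting $\Min(\gamma)=Y_\gamma\times\EE^1$ on which $\gamma$ acts trivially on $Y_\gamma$ and by translation through $\|\gamma\|$ on the line, the lines $\{y\}\times\EE^1$ being exactly the axes of $\gamma$. The crucial \emph{commutation lemma} is that if $\alpha$ commutes with $\gamma$ then $d_\gamma\circ\alpha=d_\gamma$, because $d_\gamma(\alpha x)=\dist(\gamma\alpha x,\alpha x)=\dist(\alpha\gamma x,\alpha x)=d_\gamma(x)$; hence $\alpha$ preserves $\Min(\gamma)$, permutes the axes of $\gamma$, and so respects the splitting $Y_\gamma\times\EE^1$. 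Finally, properness forbids all degeneracies: an infinite-order elliptic isometry fixes a point and would move a small ball into itself by infinitely many elements, so under a proper action \emph{every} nontrivial element of the torsion-free group $A$ is hyperbolic.

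The base case $n=1$ is then exactly the single-generator splitting above. For the inductive step I would write $A=A'\oplus\langle\alpha\rangle$ with $A'\cong\ZZ^{n-1}$ and apply the inductive hypothesis to get $\Min(A')=Y'\times\EE^{n-1}$, where $A'$ acts trivially on $Y'$ and by a translation lattice that, by construction, spans $\EE^{n-1}$. Since $\alpha$ commutes with $A'$, the commutation lemma shows $\alpha$ preserves $\Min(A')$ and its splitting, so $\alpha|_{\Min(A')}=(\phi,\psi)$ with $\phi\in\Isom(Y')$ and $\psi\in\Isom(\EE^{n-1})$; as $\psi$ commutes with a spanning set of translations, its orthogonal part fixes every translation vector and is therefore trivial, i.e. $\psi$ is itself a translation.

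The hard part will be to show that this restricted isometry is again semi-simple \emph{with the same translation length}, so that $\Min(A)=\Min(A')\cap\Min(\alpha)$ is nonempty and splits correctly. I would settle this with the nearest-point projection $\pi\colon X\to\Min(A')$ onto the closed convex $A$-invariant set, which is $1$-Lipschitz and equivariant for every isometry preserving $\Min(A')$: taking $p\in\Min_X(\alpha)$ gives $d_\alpha(\pi p)=\dist(\alpha\pi p,\pi p)=\dist(\pi\alpha p,\pi p)\le\dist(\alpha p,p)=\|\alpha\|$, forcing $\pi p\in\Min_X(\alpha)\cap\Min(A')$ and $\|\alpha|_{\Min(A')}\|=\|\alpha\|$. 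Thus $\alpha|_{\Min(A')}=(\phi,\psi)$ is semi-simple, and since $\psi$ is a translation the product structure forces $\phi$ to be semi-simple on $Y'$. Here properness enters decisively: were $\phi$ elliptic, $A$ would preserve a single flat $\{y_0\}\times\EE^{n-1}$ and act on it by translations, giving a proper action of $\ZZ^n$ by translations on $\EE^{n-1}$, which is impossible (the translation-vector map to $\RR^{n-1}$ has discrete image of rank $\le n-1$, so a nontrivial kernel element would be an infinite-order elliptic). Hence $\phi$ is hyperbolic, $\Min(\phi)=Y\times\EE^1$, and
\[
\Min(A)=\Min_X(\alpha)\cap\Min(A')=\Min(\phi)\times\Min(\psi)=(Y\times\EE^1)\times\EE^{n-1}=Y\times\EE^n,
\]
with every element of $A$ acting trivially on $Y$ and by translation on $\EE^n$; this proves (1) and (2).

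For (3) I would exploit that $\Gamma$ normalizes $A$, so $\gamma\,\Min(a)=\Min(\gamma a\gamma^{-1})$ shows $\Gamma$ preserves $\Min(A)=Y\times\EE^n$ together with the parallel family of $A$-flats, inducing a product-respecting action and in particular a homomorphism $\rho\colon\Gamma\to\Isom(\EE^n)$ carrying $A$ onto the translation lattice $\Lambda\cong\ZZ^n$. Conjugation shows that the linear part of each $\rho(\gamma)$ preserves $\Lambda$ while remaining orthogonal, so these linear parts form a finite group (a discrete subgroup of the compact group $\OO(n)$ stabilizing $\Lambda$); its kernel $\Gamma_1$ has finite index in $\Gamma$ and acts on $\EE^n$ purely by translations. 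Letting $t\colon\Gamma_1\to\RR^n$ be the translation-vector map, its image is discrete by properness and contains $t(A)\cong\ZZ^n$ with finite index, while $t|_A$ is injective (a kernel element would be an infinite-order elliptic). Passing to the finite-index subgroup $\Gamma''=t^{-1}(t(A))$ gives $\Gamma''=K\times A$ with $K=\ker t$: indeed $K$ acts trivially on $\EE^n$ and $A$ acts trivially on $Y$, so elements of $K$ and $A$ commute on $Y\times\EE^n$, while $K\cap A=1$ and $t(\Gamma'')=t(A)$ force $\Gamma''=KA$. Thus $A$ is a direct factor of a finite-index subgroup of $\Gamma$, completing the argument. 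The single genuine obstacle throughout is the semi-simplicity of the restricted isometry in step three, which the projection argument resolves; everything else is bookkeeping powered by properness.
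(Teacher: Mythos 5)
This statement is the Flat Torus Theorem, which the paper does not prove but quotes from Bridson--H\"afliger \cite[II.7.1]{BH}; there is therefore no in-paper argument to compare yours with, and I can only assess your proof on its own terms. Your parts (1) and (2) follow the standard route (induction on rank, the commutation identity $d_\gamma\circ\alpha=d_\gamma$, and the nearest-point projection onto the closed convex $A$-invariant set $\Min(A')$ to see that $\alpha$ attains its global translation length there) and are essentially sound. One step you elide: the identity $\Min(A)=\Min(A')\cap\Min_X(\alpha)$ is only obvious in the direction $\subseteq$; for $\supseteq$ you must check, for an arbitrary $a=a'\alpha^k$, that the constant value of $d_a$ on $Y\times\EE^n$ equals the infimum of $d_a$ over all of $X$. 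This follows by running your own projection trick once more (project a point of $\Min_X(a)$ onto the $a$-invariant closed convex set $Y\times\EE^n$), so it is a gap of exposition rather than of substance.

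Part (3), however, is genuinely broken. You assert that the translation-vector homomorphism $t\colon\Gamma_1\to\RR^n$ has discrete image containing $t(A)$ with finite index ``by properness''; neither claim is true, and no properness is even hypothesized for $\Gamma$ (only $A$ acts properly). Concretely, take $X=\EE^2=Y\times\EE^1$, let $A=\ZZ$ act by the translation $(0,1)$, and let $\Gamma=\ZZ^2$ be generated by $(0,1)$ and $(1,\sqrt{2})$. All hypotheses hold ($\Gamma$ even acts properly and cocompactly), yet $t(\Gamma_1)=\ZZ+\sqrt{2}\,\ZZ$ is dense in $\RR$, the subgroup $t(A)=\ZZ$ has infinite index in it, and your $\Gamma''=t^{-1}(t(A))$ equals $A$ itself, which has infinite index in $\Gamma$ --- so your construction does not produce the required finite-index subgroup (the conclusion is of course true here, since $\Gamma=A\times\langle(1,\sqrt{2})\rangle$ already). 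The repair --- and the reason the hypothesis that $\Gamma$ be finitely generated, which your argument never invokes, is indispensable --- is to use $t$ only qualitatively: it is a homomorphism to an abelian group that is injective on $A$, hence the image of $A$ in $\Gamma_1^{\mathrm{ab}}/(\text{torsion})\cong\ZZ^m$ is a rank-$n$ subgroup $\bar A$; saturate $\bar A$ to a rank-$n$ direct summand $B$, write $\ZZ^m=B\oplus C$, and let $H$ be the preimage of $\bar A\oplus C$, a finite-index subgroup containing $A$ that admits a retraction $r\colon H\to A$. Since $\Gamma_1$ centralizes $A$ (conjugation preserves translation vectors once linear parts are trivial, and $t|_A$ is injective), $A$ is central in $H$ and $H=A\times\ker(r)$. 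The remaining ingredients of your part (3) (finiteness of the group of linear parts, the product-respecting action of the normalizer) are fine.
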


\begin{prop}
	Let $M$ be a projective hyperk\"{a}hler manifold, and $f\in\Bir(M)$ be either parabolic, or loxodromic. Denote by $C(f)$ the centralizer of $f$ in $\Bir(M)$. Then $C(f)$ has a finite index subgroup $H$ which splits as a direct product: $H=N\times\langle f\rangle$. 
\end{prop}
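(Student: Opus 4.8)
The plan is to transport the problem to the CAT(0) space $\XX_M$ and apply the Flat Torus Theorem. First I would record that, since $f$ is parabolic or loxodromic, it has infinite order, so $\langle f\rangle\cong\ZZ$, and by Lemma \ref{lem: preserving of types} the isometry $\alpha:=\Theta_M(f)$ is $\XX_M$-loxodromic; in particular $\alpha$ is semi-simple, and so is every power $\alpha^n$. Hence $A:=\langle\alpha\rangle\cong\ZZ$ is a free abelian group acting properly by semi-simple isometries on $\XX_M$ (properness is inherited from the geometric action of $\Bir(M)$, and $\Theta_M|_{\langle f\rangle}$ is injective because $\alpha$ has infinite order). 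I also note that $K:=\ker\Theta_M$ is finite, since a proper action has finite point stabilizers and $K$ is contained in each of them.

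The key point is to verify that $\Gamma_0:=\Theta_M(C(f))$ is finitely generated, so that the Flat Torus Theorem applies. Since every element of $C(f)$ commutes with $f$, the group $\Gamma_0$ commutes with $\alpha$, hence normalizes $A$. I claim $\Gamma_0$ has finite index in the full centralizer $C_\Gamma(\alpha)$ of $\alpha$ in $\Gamma:=\Theta_M(\Bir(M))$. Indeed, writing $\tilde C=\{h\in\Bir(M):[h,f]\in K\}$ one has $C_\Gamma(\alpha)=\Theta_M(\tilde C)$; moreover $\tilde C$ acts by conjugation on the finite set $Kf$ (using that $K$ is normal and finite and that $hfh^{-1}\in Kf$ for $h\in\tilde C$), and the kernel of the resulting homomorphism $\tilde C\to\Symmetric(Kf)$ is contained in $C(f)$, so $[\tilde C:C(f)]<\infty$ and thus $[C_\Gamma(\alpha):\Gamma_0]<\infty$. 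Now $C_\Gamma(\alpha)$ acts properly and cocompactly on the convex subset $\Min(\alpha)\subset\XX_M$ (a standard consequence of the geometric action of $\Gamma$ for an infinite-order, hence semi-simple, element), so by the \v{S}varc--Milnor lemma it is finitely generated; therefore its finite-index subgroup $\Gamma_0$ is finitely generated as well.

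With finite generation in hand, the Flat Torus Theorem (3), applied to $A$ and $\Gamma_0$ (which normalizes $A$), yields a finite-index subgroup $\Gamma_0'\leqslant\Gamma_0$ containing $A$ as a direct factor, say $\Gamma_0'=N_0\times A$. Let $\pi\colon\Gamma_0'\to A$ be the projection (the identity on $A$), and set $H:=\Theta_M^{-1}(\Gamma_0')\cap C(f)$, a finite-index subgroup of $C(f)$ containing $f$ (because $\Theta_M(f)=\alpha\in A\subseteq\Gamma_0'$). Identifying $A\cong\ZZ$ via $\alpha\mapsto 1$, the homomorphism $\phi:=\pi\circ\Theta_M|_H\colon H\to\ZZ$ satisfies $\phi(f)=1$, so it is surjective and splits via $1\mapsto f$. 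Writing $N:=\ker\phi$ we obtain $H=N\rtimes\langle f\rangle$, and since $f$ is central in $C(f)$, hence in $H$, this semidirect product is in fact direct: $H=N\times\langle f\rangle$, as required.

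The main obstacle is the finite-generation step: one must know that the centralizer of an infinite-order (semi-simple) element in a CAT(0) group acts cocompactly on its minimal set $\Min(\alpha)$, which is the technical heart, together with the bookkeeping needed to pass between $C(f)$, the approximate centralizer $\tilde C$, and the genuine centralizer $C_\Gamma(\alpha)$ of $\alpha$ modulo the finite kernel $K$. The final splitting, by contrast, is a formal consequence of the Flat Torus Theorem and the centrality of $f$ in $C(f)$.
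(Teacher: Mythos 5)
Your proposal is correct and follows essentially the same route as the paper: the ``standard consequence'' you invoke (that the centralizer of an infinite-order element acts geometrically on $\Min(\alpha)$) is exactly the paper's citation of Ruane's Theorem~3.2, after which both arguments conclude via finite generation and part (3) of the Flat Torus Theorem. Your extra bookkeeping with the finite kernel $K=\ker\Theta_M$ and the passage between $C(f)$, $\tilde C$, and $C_\Gamma(\alpha)$ is a welcome refinement of a step the paper's two-line proof leaves implicit.
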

\begin{proof}
	Suppose that a group $\Gamma$ acts geometrically on a CAT(0) space $X$, and $\gamma$ is an element of infinite order. Then its centralizer $C(\gamma)$ acts geometrically on the CAT(0) subset $\Min(\gamma)$ of $X$ \cite[Theorem 3.2]{Ruane}. Thus we see that $C(f)$ is a CAT(0) group, hence finitely generated by Theorem \ref{thm: finiteness properties} (1). It remains to apply the Flat Torus Theorem (3) to $A=\langle f\rangle$. 
\end{proof}

Given a finitely generated group and its arbitrary element, it is natural to ask how the iterates of this element behave with respect of generators. Namely, let $\Gamma$ be a finitely generated group with finite symmetric generating set $\Sigma=\Sigma^{-1}$. Recall that the word metric on $\Gamma$ is defined as
\[
w_\Sigma(\gamma_1,\gamma_2)=\min\{n:\ \gamma_1^{-1}\gamma_2=\sigma_1\sigma_2\ldots\sigma_n,\ \sigma_i\in\Sigma\},
\]
and the length of $\gamma\in\Gamma$ is $|\gamma|_\Sigma=w_\Sigma(\id,\gamma)$. An element $\gamma\in\Gamma$ is called {\it distorted} if 
\[
\lim_{n\to\infty}\frac{|\gamma^n|_\Sigma}{n}=0
\]
and {\it undistorted} otherwise. The property of being undistorted is well known to be independent of choice of $\Sigma$.

\begin{prop}
	Let $M$ be a projective hyperk\"{a}hler manifold. Then its loxodromic and parabolic birational automorphisms are undistorted. 
\end{prop}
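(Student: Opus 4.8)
The plan is to transfer the question about the word metric on $\Bir(M)$ to a question about displacement in the CAT(0) space $\XX_M$, exploiting that the two are quasi-isometric. Since $\Bir(M)$ acts geometrically on $\XX_M$ via $\Theta_M$ (Theorem \ref{thm: hyperkahler are CAT(0)}), the \v{S}varc--Milnor lemma says that for any basepoint $x_0\in\XX_M$ the orbit map $\gamma\mapsto\Theta_M(\gamma)\cdot x_0$ is a quasi-isometry between $(\Bir(M),w_\Sigma)$ and $(\XX_M,\dist)$. Concretely, there are constants $\lambda\geqslant 1$ and $\epsilon\geqslant 0$ with
\[
\tfrac{1}{\lambda}|\gamma|_\Sigma-\epsilon\ \leqslant\ \dist\big(\Theta_M(\gamma)\cdot x_0,\,x_0\big)\ \leqslant\ \lambda|\gamma|_\Sigma+\epsilon
\]
for every $\gamma\in\Bir(M)$; it is the upper inequality that I would use.

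Now let $f$ be $M$-loxodromic or $M$-parabolic. By Lemma \ref{lem: preserving of types} the isometry $\Theta_M(f)$ is $\XX_M$-loxodromic; in particular it is semi-simple with strictly positive translation length $\|\Theta_M(f)\|>0$ and $\Min(\Theta_M(f))\neq\varnothing$. Choosing the basepoint $x_0\in\Min(\Theta_M(f))$, the isometry $\Theta_M(f)$ translates $x_0$ along its axis, so that
\[
\dist\big(\Theta_M(f^n)\cdot x_0,\,x_0\big)=n\,\|\Theta_M(f)\|\qquad\text{for all }n\geqslant 0.
\]
Combining this with the quasi-isometry estimate applied to $\gamma=f^n$ gives
\[
n\,\|\Theta_M(f)\|=\dist\big(\Theta_M(f^n)\cdot x_0,\,x_0\big)\leqslant\lambda|f^n|_\Sigma+\epsilon,
\]
whence $|f^n|_\Sigma/n\geqslant \|\Theta_M(f)\|/\lambda-\epsilon/(\lambda n)$. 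Letting $n\to\infty$ yields $\liminf_n|f^n|_\Sigma/n\geqslant\|\Theta_M(f)\|/\lambda>0$, so $f$ is undistorted. Note that the two cases are handled uniformly: the role of Lemma \ref{lem: preserving of types} is precisely to convert a possibly parabolic isometry of the hyperbolic space $(\NS(M)_\RR,q_{BBF})$ into a genuinely loxodromic isometry of $\XX_M$ with positive translation length, and it is this positivity that produces the linear lower bound.

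The main conceptual step — and the only place requiring care — is the passage from distortion measured in the word metric of $\Bir(M)$ to displacement measured in $\XX_M$; this is exactly what \v{S}varc--Milnor supplies, and it applies because $\Theta_M$ is geometric and $\XX_M$ is a proper geodesic (hence length) space. Everything else is formal, following from the definition of a loxodromic isometry together with the product splitting of its minimal set. Finally, since undistortedness is independent of the chosen generating set $\Sigma$, the conclusion is an intrinsic property of $f\in\Bir(M)$.
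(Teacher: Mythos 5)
Your proposal is correct and follows essentially the same route as the paper: both arguments use Lemma \ref{lem: preserving of types} to make $\Theta_M(f)$ a loxodromic (hence semi-simple) isometry of $\XX_M$, bound the orbit displacement above by a linear function of word length (you via \v{S}varc--Milnor, the paper via the elementary Lipschitz estimate \cite[I.8.18]{BH}), and extract the linear lower bound from $\dist(x_0,\Theta_M(f)^n x_0)=n\|\Theta_M(f)\|$ on the minimal set, which is exactly the Flat Torus Theorem computation the paper carries out. Your choice of basepoint in $\Min(\Theta_M(f))$ streamlines that last step slightly, but the substance is identical.
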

\begin{proof}
	Let $\gamma\in\Bir(M)$ be a $M$-loxodromic or $M$-parabolic automorphism. Then $\Theta(\gamma)=\Theta_M(\gamma)$ is of infinite order by Lemma \ref{lem: preserving of types}. By \cite[I.8.18]{BH} for any choice of basepoint $x_0\in\XX_M$ there exists a constant $\mu>0$ such that
	\[
	\dist_{\XX_M}(\gamma_1 x_0,\gamma_2 x_0)\leq \mu w_\Sigma(\gamma_1,\gamma_2).
	\]
	Then one has
	\begin{equation}\label{eq: distortion 1}
	\lim_{n\to\infty}\frac{|\gamma^n|_\Sigma}{n}\geqslant\lim_{n\to\infty}\frac{|\Theta(\gamma)^n|_{\Theta(\Sigma)}}{n}=\lim_{n\to\infty}\frac{w_{\Theta(\Sigma)}(\id,\Theta(\gamma)^n)}{n}\geqslant\lim_{n\to\infty}\frac{\mu^{-1}\dist_{\XX_M}(x_0,\Theta(\gamma)^nx_0)}{n},
	\end{equation}
	where $x_0\in\XX_M$ is an arbitrary point. Now let $X$ be a CAT(0) space, $\delta$ be a semi-simple isometry, and $x\in X$ be any point. Then it is easy to check that
	\begin{equation}\label{eq: distortion 2}
	\|\delta\|=\frac{\dist_X(x,\delta^n x)}{n}.
	\end{equation}
	By the Flat Torus Theorem, the set $\Min(\langle\delta\rangle)\equiv\cap_k\Min(\gamma^k)$ is $\gamma$-invariant ans splits as a product $Y\times\EE^1$ such that $\delta$ acts identically on $Y$ and by translations on $\EE^1$. It then easily follows that $\|\delta^n\|=n\cdot\|\delta\|$.  Now taking $X=\XX_M$ and $\delta=\Theta(\gamma)$ we get from (\ref{eq: distortion 1}) and (\ref{eq: distortion 2}) that
	\[
	\lim_{n\to\infty}\frac{|\gamma^n|_\Sigma}{n}\geqslant\mu^{-1}\|\Theta(\gamma)^n\|=\mu^{-1}n\|\Theta(\gamma)\|>0,
	\]
	since $\|\Theta(\gamma)\|>0$.
\end{proof}

\subsection{Cohomological properties}

Finally we would like to show that cohomological properties of $\Bir(M)$ and $\Aut(M)$ mentioned in \cite{Fu} can be also obtained using our approach. Recall that a group $\Gamma$ is called {\it of type FL} if the trivial $\ZZ[\Gamma]$-module $\ZZ$ has a finite resolution by free $\ZZ[\Gamma]$-modules of finite rank:
\[
0\to \ZZ[\Gamma]^{n_k}\to\ldots\to \ZZ[\Gamma]^{n_1}\to\ZZ\to 0.
\]
We say that $\Gamma$ is {\it of type VFL\footnote{Virtuellement une r\'{e}solution Libre de type Finie}} if it is virtually FL, i.e. admits a finite-index subgroup satisfying property FL.

\begin{prop}
	Let $M$ be a projective hyperk\"{a}hler manifold. Then the groups $\Aut(M)$ and $\Bir(M)$ are of type VFL.
\end{prop}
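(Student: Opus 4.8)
The plan is to reduce type VFL to the existence of a single finite-index torsion-free subgroup of type FL, and then to obtain FL by letting such a subgroup act freely on the contractible space $\XX_M$ and reading off a finite free resolution from its cellular chains. Let $\Gamma$ be either $\Aut(M)$ or $\Bir(M)$. By Theorem~\ref{thm: hyperkahler are CAT(0)} it acts properly and cocompactly by isometries on $\XX_M$; this space is contractible (as every CAT(0) space is) and, by its construction in Section~\ref{sec: construction}, finite-dimensional. Since a group of type FL is automatically torsion-free and VFL asks only for some finite-index subgroup of type FL, it is enough to carry out the two steps below.

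First I would establish that $\Gamma$ is virtually torsion-free. The image $\Gamma^{*}=\Phi_{\NS}(\Gamma)$ preserves $\NS(M)$ and the BBF-form, so it is a finitely generated subgroup of $\OO(\NS(M))\subset\GL_{\rho}(\ZZ)$ and is virtually torsion-free by Minkowski's lemma \cite{Serre} (a principal congruence subgroup is torsion-free). The difficulty is that $\Phi_{\NS}$ has a finite, possibly nontrivial, kernel $N$ (Proposition~\ref{prop: kernel is finite}), and a finite-by-linear group need not be virtually torsion-free — Deligne's non-residually-finite central extensions of $\Sp_{2g}(\ZZ)$ by a finite cyclic group show that lifting a torsion-free finite-index subgroup through $N$ can fail. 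I would get around this using residual finiteness of $\Gamma$ (cf. \cite{Fu}) together with the fact, from Theorem~\ref{thm: finiteness properties}(2), that $\Gamma$ has only finitely many conjugacy classes of finite subgroups: picking representatives $F_1,\dots,F_k$ and, for each nonidentity $x$ in each $F_i$, a finite-index normal subgroup avoiding $x$, the intersection $\Gamma'$ of these is a finite-index normal subgroup that meets no nontrivial finite subgroup (a torsion $g\in\Gamma'$ is conjugate to some such $x$, and normality forces $x\in\Gamma'$, a contradiction), hence is torsion-free.

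Next I would run the geometric argument on such a $\Gamma'$. Properness of the action forces finite-order isometries to be elliptic, so a nontrivial torsion element would fix a point and generate a nontrivial finite stabilizer; as $\Gamma'$ is torsion-free this cannot happen, and the action of $\Gamma'$ on $\XX_M$ is therefore \emph{free}. Viewing $\XX_M$ as a finite-dimensional polyhedral complex (subdividing if needed so that $\Gamma'$ acts freely, cocompactly and cellularly), it becomes a contractible CW complex and the universal cover of the finite complex $\XX_M/\Gamma'$, so the cellular chain complex
\[
0\to C_n(\XX_M)\to\cdots\to C_0(\XX_M)\to\ZZ\to 0
\]
is a resolution of the trivial module by free $\ZZ[\Gamma']$-modules of finite rank — free because the cells are permuted freely, of finite rank because the action is cocompact. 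This exhibits $\Gamma'$ as a group of type FL, whence $\Gamma$ is VFL.

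The hard part is the virtual torsion-freeness in the first step: passing a torsion-free finite-index subgroup through the finite kernel $N$ is not formal, and the residual finiteness that makes it work is the one input genuinely special to $\Aut(M)$ and $\Bir(M)$ rather than to abstract CAT(0) groups. By contrast, the second step is the standard passage from a free cocompact action on a contractible complex to a finite free resolution, and needs only that the space of Section~\ref{sec: construction} really is a CW complex carrying a cellular action — which its construction guarantees.
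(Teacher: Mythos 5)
Your proposal is correct in outline and follows the same strategy as the paper's proof: produce a torsion-free finite-index subgroup $\Gamma_0\subset\Gamma$, observe that its induced action on the contractible, finite-dimensional CAT(0) space $\XX_M$ is still proper and cocompact and (by torsion-freeness) free, and extract a finite free resolution of $\ZZ$ over $\ZZ[\Gamma_0]$; the paper packages that last step as ``finite classifying space, hence FL'' via citations to Bridson--Haefliger and Brown, while you write down the cellular chain complex directly, which is the same content. The genuine divergence is at the virtual torsion-freeness step. The paper disposes of it in one sentence: Selberg's lemma makes $\Phi_{\NS}(\Gamma)$ virtually torsion-free, and ``since $\ker\Phi_{\NS}$ is finite, $\Gamma$ also contains a finite-index torsion-free subgroup.'' You are right that this implication is not formal --- a finite (even central) extension of a virtually torsion-free linear group need not be virtually torsion-free, as Deligne's non-residually-finite extensions of $\Sp_{2g}(\ZZ)$ by a finite cyclic group show --- so your scepticism points at a real soft spot in the paper's own argument rather than a defect of yours. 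Your repair (residual finiteness together with finitely many conjugacy classes of finite subgroups implies virtually torsion-free) is a correct general principle, and the finiteness of conjugacy classes is indeed supplied by Theorem~\ref{thm: finiteness properties}(2). The remaining weak point is the input of residual finiteness of $\Gamma$ itself: this does not follow merely from ``$\ker\Phi_{\NS}$ finite and $\Phi_{\NS}(\Gamma)$ linear,'' for exactly the same Deligne-type reason, so the parenthetical ``(cf.\ \cite{Fu})'' is carrying real weight and would need to be replaced by an actual proof that $\Aut(M)$ and $\Bir(M)$ are residually finite, or by some argument special to this situation (for instance a faithful integral representation on a larger cohomology group). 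In short: same route as the paper, with a more careful and more honest treatment of the one step the paper glosses over; to be complete, either your version or the paper's still has to justify lifting torsion-freeness through the finite kernel.
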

\begin{proof}
	Let $\Gamma$ denote either $\Aut(M)$ or $\Bir(M)$. By Selberg's lemma, the group $\Phi_{\NS}(\Gamma)$ is virtually torsion-free. Since $\ker\Phi_{\NS}$ is finite, $\Gamma$ aslo contains a finite-index torsion-free subgroup, say $\Gamma_0$. Consider the action of $\Gamma$ on the associated CAT(0) space $\XX_M$. Note that cocompactness is inherited under restriction of the action of $\Gamma$ to any finite-index subgroup, and properness holds for any subgroup of $\Gamma$. So, the action of $\Gamma_0$ on $\XX_M$ is proper and cocompact (and free). By \cite[III.$\Gamma$.1.1, II.5.13]{BH}, $\Gamma_0$	has a finite CW complex as classifying space. By \cite[VIII.6.3]{Brown} $\Gamma_0$ is of type FL then.
\end{proof}

We refer to \cite{Brown} for further finiteness properties of groups. 

\section{Constrution of a CAT(0) space}\label{sec: construction}

To prove Theorem \ref{thm: hyperkahler are CAT(0)} (and Theorem A) for a projective\footnote{We mention once again that in non-projective setting all our statements follow from Oguiso's work \cite{Og}, \cite{Og1}.} hyperk\"{a}hler manifold $M$ we shall construct a CAT(0) space where the groups $\Bir(M)$ and $\Aut(M)$ act properly and cocompactly by isometries. As was mentioned in Introduction, here we follow the ideas sketched in \cite{TotaroHyperbolic} (see also \cite{Benzerga} for more details), but try to make our exposition accessible for a non-expert in metric geometry. The main reference where the reader can find most technical facts used here is \cite{Rat} and \cite{Apanasov}. Our construction will involve a hyperbolic space, so we first recall some basic definitions.

\subsection{\bf Hyperbolic space and its isometries}\label{subsec: hyperbolic prelim} A hyperbolic $n$-dimensional space is an $n$-dimensional Riemannian simply connected space of constant negative curvature. Throughout this note we use several models of hyperbolic space, which we briefly describe below to establish notation. The main reference is \cite{Rat}. A reader with a good background in hyperbolic geometry can easily skip this subsection and go to Setup \ref{subsec: setup}.

\subsubsection{\bf Standard models} Let $\Vect$ be a Minkowski vector space of dimension $n+1$ with the quadratic form $\qform: \Vect\to\RR$ of signature $(1,n)$ and inner product of two vectors $v_1$, $v_2$ denoted by $\langle v_1,v_2\rangle$. We will choose the coordinates $x_0,\ldots, x_n$ in $\Vect$ such that $\qform=x_0^2-x_1^2-\ldots-x_n^2$. The vectors $v\in V$ with $\qform(v)=1$ form an n-dimensional hyperboloid consisting of two connected components: $H^{+}=\{x_0>0\}$, and $H^{-}=\{x_0<0\}$. The points of the {\it hyperboloid model} $\HH^n$ are the points on $H^+$. The distance function is given by $\dist(u,v)={\rm argcosh}\langle u,v\rangle$. The $m$-planes are represented by the intersections of the $(m+1)$-planes in $\Vect$ with $H^{+}$. 

The {\it Poincar\'{e} model} of $\HH^n$ has its points lying inside the unit open disk
\[
\BB^n=\{(0,x_1,\ldots,x_n)\in\RR^{n+1}:\ x_1^2+\ldots+x_n^2<1\}
\]
and is obtained from the hyperboloid model by means of the stereographic projection $\zeta$ from the south pole of the
unit sphere in $\Vect$ (i.e. the point $(-1,0,\ldots,0)$) on the hyperplane $V_0=\{x_0=0\}$. A subset $P\subset\BB^n$ is called a hyperbolic $m$-plane if and only if $\zeta(P)$ is a hyperbolic $m$-plane of $\HH^n$. 

Denote by $\EE^n$ the Euclidean $n$-space with standard Euclidean metric. By $\widehat{\EE}^n=\EE^n\cup\{\infty\}$ we denote its one-point compactification (e.g. $\widehat{\CC}$ is the Riemann sphere). If $a$ is a unit vector in $\EE^n$ and $r\in\RR$, then $P(a,r)$ is the hyperplane with unit normal vector $a$ passing through the point $ra$. Further, $S(a,r)$ denotes the sphere of radius $r$ centered at $a$. We shall also consider extended planes $\widehat{P}(a,r)=P(a,r)\cup\{\infty\}$. By a {\it sphere} in $\widehat{\EE}^n$ we mean either a Euclidean sphere or an extended plane (so, topologically a sphere too).

A $p$-sphere and a $q$-sphere of $\widehat{\EE}^n$ are said to be orthogonal if they intersect and at each finite point of intersection their tangent planes are orthogonal. One can show that $P\subset\BB^n$ is a hyperbolic $m$-plane of $\BB^n$ if and only if $P$ is the intersection of $\BB^n$ either with an $m$-dimensional vector subspace of $V_0=\EE^n$, or an $m$-sphere of $V_0$ orthogonal to $\partial\overline{\BB}^n$. 

In the Poincare ball model $\BB^n$, a {\it horoball} based at $a\in\partial\DD^n$ is an Euclidean ball contained in $\overline{\BB}^n$ which is tangent to $\partial\BB^n$ at the point $a$. Assume $\Gamma$ contains a parabolic element having $a\in\partial \BB^n$ as its fixed point. A {\it horocusp region} is an open horoball $B$ based at a point $a\in \partial \BB^n$ such that for all $\gamma\in\Gamma\setminus\Stab_\Gamma(a)$ one has $\gamma(B)\cap B=\varnothing$.

Finally, we mention the {\it upper half-space model}
\[
\UU^n=\{(x_1,\ldots,x_n)\in\EE^n:\ x_n>0\}
\]
with a metric induced from $\BB^n$ in the following way. Let $\sigma$ be the reflection of $\widehat{\EE}^n$ in the sphere $S(e_n,\sqrt{2})$ and $\rho$ be the reflection of $\widehat{\EE}^n$ in $\widehat{\EE}^{n-1}$. Then $\eta=\sigma\circ\rho$ maps homeomorphically $\UU^n$ to $\BB^n$. Put $\dist_{\UU^n}(u,v)=\dist_{\BB^n}(\eta(u),\eta(v))$. A subset $P\subset\UU^n$ is called a hyperbolic $m$-plane if and only if $\eta(P)$ is a hyperbolic $m$-plane of $\BB^n$. One can show that $P\subset\UU^n$ is a hyperbolic $m$-plane of $\UU^n$ if and only if $P$ is the intersection of $\UU^n$ either with an $m$-plane of $\EE^n$ orthogonal to $\EE^{n-1}$, or an $m$-sphere of $\EE^n$ orthogonal to $\EE^{n-1}$.

Recall that a {\it geodesic line} (or just {\it geodesic}) in a Riemannian manifold $M$ is a continuous map $\gamma: \RR\to M$ such that $\dist_M(\gamma(x),\gamma(y))=|x-y|$. We also refer to the image of $\gamma$ as a geodesic line. For any two distinct points $x,y\in M$ there exists a closed interval $[a; b]\subset \RR$ and a geodesic $\gamma$ with $\gamma(a)=x$, $\gamma(b)=y$, which is called the {\it geodesic segment}. In all described models of hyperbolic space, its geodesics are just hyperbolic lines, i.e. $1$-planes.

\subsubsection{\bf Isometries} For the reader's convenience, we shall use the language of M\"{o}bius transformation as in our main reference \cite{Rat} when talking about isometries of hyperbolic space.

A {\it M\"{o}bius transformation} of $\widehat{\EE}^n$ is a finite composition of reflections of $\widehat{\EE}^n$ in spheres. Consider $\EE^{n-1}\equiv\EE^{n-1}\times\{0\}\subset\EE^n$. Any $f\in\Mob(\widehat{\EE}^{n-1})$ can be extended to an element of $\Mob(\widehat{\EE}^n)$ as follows. If $f$ is a reflection of $\widehat{\EE}^{n-1}$ in $\widehat{P}(a,r)$ then $\widetilde{f}$ is the reflection of $\widehat{\EE}^n$ in $\widehat{P}(\widetilde{a},r)$ where $\widetilde{a}=(a,0)$. If $f$ is a reflection of $\widehat{\EE}^{n-1}$ in $S(a,r)$ then $\widetilde{f}$ is the reflection of $\widehat{\EE}^n$ in $S(\widetilde{a},r)$. The {\it  Poincar\'{e} extension} of an arbitrary $f=f_1\circ\ldots\circ f_m\in\Mob(\widehat{\EE}^{n-1})$ is then defined\footnote{It is easy to see that the definition is correct.} as $\widetilde{f}=\widetilde{f}_1\circ\ldots\circ \widetilde{f}_m$.

If $Y=\UU^n$ or $\BB^n$, a {\it M\"{o}bius transformation} $f\in\Mob(Y)$ is a M\"{o}bius transformation of $\widehat{\EE}^n$ that leaves $Y$ invariant. The element $f\in\Mob(\UU^n)$ is a M\"{o}bius transformation if and only if it is the Poincar\'{e} extension of an element of $\Mob(\widehat{\EE}^{n-1})$, so $\Mob(\UU^n)\cong\Mob(\widehat{\EE}^{n-1})$ \cite[\S 4.4]{Rat}. Similar statement holds for $\Mob(\BB^n)$.

Every M\"{o}bius transformation of $\BB^n$ restricts to an isometry of the conformal ball model $\BB^n$, and every isometry of $\BB^n$ extends to a unique M\"{o}bius transformation of $\BB^n$ \cite[Theorem 4.5.2]{Rat}. In particular, $\Isom(\BB^n)\cong\Mob(\BB^n)$.

Let $f\in\Mob(\BB^n)$ be a M\"{o}bius transformation (an isometry of the hyperbolic $n$-space). Then $f$ maps $\overline{\BB}^n$ into itself and by the Brouwer fixed point theorem, $f$ has a fixed point in $\overline{\DD}^n$. Recall that $f$ is said to be {\it elliptic} if $f$ fixes a point of $\BB^n$; {\it parabolic} if $f$ fixes no point of $\BB^n$ and fixes a unique point of $\partial\overline{\BB}^n=\Sph^{n-1}$; {\it loxodromic} if $f$ fixes no point of $\BB^n$ and fixes two points of $\Sph^{n-1}$, say $a$ and $b$. A hyperbolic line $L$ joining $a$ and $b$ is called the {\it axis} of $f$, and $f$ acts as a translation along $L$. If $f$ translates $L$ in the direction of $a$, then for any $x\in\overline{\BB}^n$, $x\ne b$, one has $f^m(x)\to a$ as $m\to\infty$, i.e. $a$ is an {\it attractive fixed point} (and $b$ is {\it repulsive}).

\subsection{Setup}\label{subsec: setup}

Assume that we have:

\begin{itemize}
	\item A vector space $\Vect$ of dimension $n+1$, $n\geqslant 2$, with a fixed $\ZZ$-structure $\Lambda\cong\ZZ^{n+1}$, $\Lambda\otimes\RR=\Vect$;
	\item A quadratic form $\qform: \Vect\to\RR$ of signature $(1,n)$;
	\item Hyperbolic space (with models $\HH^n$, $\BB^n$ and $\UU^n$), associated to $(\Vect,\qform)$;
	\item A convex cone $\Cone$ in $\Vect$;
	\item A group action $\Phi:\Gamma\to\GL(\Lambda)\subset\GL(\Vect)$ with discrete image $\Phi(\Gamma)$ and finite kernel, such that $\Gamma$ preserves $\Cone,\ \qform$, and has a rational polyhedral (locally finite) fundamental domain $\Omega$ on $\Cone$. 
\end{itemize}

\begin{thm}\label{thm: main technical}
	Suppose that all the conditions in Setup \ref{subsec: setup} hold. Then $\Gamma$ is a CAT(0) group. 
\end{thm}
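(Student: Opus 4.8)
The plan is to realize $\Gamma$ as a group acting geometrically on a truncation of the hyperbolic space attached to $(\Vect,\qform)$. Since $\qform$ has signature $(1,n)$, the space $\HH^n$ is the standard model built from $\Vect$, and the homomorphism $\Phi$ lands in $\OO(\qform)\cong\OO(1,n)$ with discrete image and finite kernel. Hence $\Gamma$ acts on $\HH^n$ by isometries and properly discontinuously: the finite kernel acts trivially and is harmless for the ball condition defining properness, while discreteness of $\Phi(\Gamma)$ gives proper discontinuity. Projectivizing the convex cone $\Cone$ produces an open convex $\Gamma$-invariant subset $\mathcal C\subset\HH^n$, and the rational polyhedral, locally finite fundamental domain $\Omega$ on $\Cone$ descends to a fundamental polyhedron $D\subset\mathcal C$ for this action.

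First I would show that $\Gamma$ is \emph{geometrically finite} in the sense of \cite{Rat}. The essential input is that $\Omega$ is rational polyhedral and locally finite, which forces $D$ to have only finitely many sides; this is precisely one of the equivalent characterizations of geometric finiteness for discrete groups of hyperbolic isometries. Consequently the quotient has finitely many ends, all of cusp type, there are only finitely many $\Gamma$-orbits of ideal vertices of $D$, and each such ideal vertex is a bounded parabolic fixed point of $\Gamma$ on $\partial\HH^n$. Matching the face combinatorics and rationality of $\Omega$ with the dictionary for geometrically finite polyhedra is where one must be attentive; the discussion of the Looijenga and Dirichlet domains in Remark \ref{rem: domain properties} packages most of what is needed.

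The next step is to truncate the cusps. For each $\Gamma$-orbit of ideal vertices I would pick a horoball based at the corresponding bounded parabolic point and take its $\Gamma$-translates; after shrinking, these form a $\Gamma$-invariant, pairwise disjoint, locally finite family $\{B_i\}$ of horocusp regions. Set $X:=\mathcal C\setminus\bigcup_i B_i$ with the induced length metric. Then $\Gamma$ preserves $X$ and acts on it by isometries, properly (properness is inherited from $\HH^n$) and now also cocompactly: a fundamental domain is $D\cap X$, i.e. the finite-sided polyhedron $D$ with its finitely many cusps cut off, which is compact. Thus the $\Gamma$-action on $X$ is geometric, and everything reduces to showing that $X$ is CAT(0).

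Proving that $X$ is CAT(0) is the main obstacle, and I would handle it through the CAT(0) globalization (Cartan--Hadamard) theorem \cite[II.4.1]{BH}: since $X$ is complete and simply connected (the removed horoballs have closures meeting $\partial\HH^n$, so excising them creates no loops and preserves simple connectivity of the convex set $\mathcal C$), it suffices to prove that $X$ is \emph{locally} CAT(0). At interior points $X$ is locally isometric to $\HH^n$, hence locally CAT($-1$); along the wall $\partial\mathcal C$ (if $\mathcal C\neq\HH^n$) local CAT(0) is automatic because a closed convex subset of a CAT(0) space is CAT(0). The delicate points are those lying on a truncating horosphere $S_i=\partial B_i$. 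Here one exploits that horoballs are convex and that horospheres are intrinsically flat, isometric to $\EE^{n-1}$: working in the upper half-space model $\UU^n$ with $B_i$ based at $\infty$, one checks that length-minimizing paths between nearby boundary points bend to run along the flat horosphere and that the resulting local geometry is non-positively curved, while disjointness of the $B_i$ ensures that no two horospheres interact locally. Completeness, simple connectivity, and local CAT(0) then give that $X$ is globally CAT(0), so $\Gamma$ acts geometrically on a CAT(0) space and is therefore a CAT(0) group.
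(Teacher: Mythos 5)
Your overall architecture is the right one and matches the paper's: push $\Gamma$ into $\OO(1,n)$ via $\Phi$, projectivize $\Cone$ to a convex $\Gamma$-invariant subset of $\BB^n$ with a finite-sided fundamental polyhedron, truncate cusps by a $\Gamma$-invariant family of disjoint horoballs, and show the truncation is CAT(0) with a geometric $\Gamma$-action. However, there are two genuine gaps. The first is that you never intersect with the convex hull $C(\Gamma)$ of the limit set, and without this the cocompactness step fails. The closure of the fundamental polyhedron meets $\partial\overline{\BB}^n$ in finitely many ideal points, but these need not all be parabolic (cusped) fixed points of $\Gamma$: some may lie in the ordinary set $O(\Gamma)=\partial\overline{\BB}^n\setminus L(\Gamma)$ (e.g.\ rational isotropic extreme rays of $\Amp(M)^+$ fixed by nothing, or the extreme case where $\Gamma$ is finite and the polyhedron is the whole non-compact cone). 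At such points there is no horoball you can remove equivariantly to restore compactness, so ``$D$ with its finitely many cusps cut off'' is not compact in general. This is precisely why the paper works with $X=\CAT\cap C(\Gamma)$ and why the content of Lemma \ref{lem: apanasov lemma} is that $\overline{\Pi}\cap L(\Gamma)$ (not $\overline{\Pi}\cap\partial\overline{\BB}^n$) is a finite set of \emph{cusped} limit points; your appeal to ``geometric finiteness implies all ends are cusps'' silently assumes the ordinary part of the ideal boundary has already been discarded.

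The second gap is in the CAT(0) step. Granting (as in \cite[II.11.27]{BH}, or via your local Cartan--Hadamard route) that $\BB^n$ minus a disjoint family of open horoballs is complete and CAT(0), you still need the truncated \emph{convex piece} to be convex in the \emph{induced length metric} of the truncated hyperbolic space --- and this is not automatic, nor is it a local statement at points of $\partial\CAT$. Geodesics of the truncated space are concatenations of hyperbolic arcs and Euclidean arcs running along the bounding horospheres (Theorem \ref{thm: truncated}); the hyperbolic arcs stay in the convex set, but a Euclidean arc along a horosphere joining two points of $X$ can leave $X$. The paper repairs this by shrinking each horoball until the antipodal point of its base point lies in $X$, after which the horospherical arcs are trapped in $X$ and convexity (hence the CAT(0) property of $X\setminus W$) follows. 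Your sketch asserts local non-positive curvature at horosphere points and convexity ``along the wall'' but does not confront this interaction between the flat horospherical geodesics and the boundary of the convex set, which is where the actual work lies.
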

\begin{proof}[Proof of Theorem \ref{thm: hyperkahler are CAT(0)} (Theorem A)]
	For $\rho(M)\geqslant 2$ it follows with $\Lambda=\NS(M)$, $\qform=q_{BBF}$, $\Cone=\Amp(M)^+$ for $\Gamma=\Aut(M)$ and $\Cone=\Mov(M)^{+}$ for $\Gamma=\Bir(M)$, and $\Omega$ given by Theorem \ref{thm: cone theorems} and Remark \ref{rem: domain properties}. Note that the kernel of $\Phi=\Phi_{\NS}$ is finite by Proposition \ref{prop: kernel is finite}.
	
	Now let us treat the case $\rho(M)=2$ separately (as will be clear from below, case $\rho(M)=1$ is easier). Let $\Gamma$ be either $\Aut(M)$ or $\Bir(M)$. By Proposition \ref{prop: kernel is finite} there is a short exact sequence
	\[
	1\to K\to \Gamma\overset{\Phi_{\NS}}{\longrightarrow} \Gamma^*\to 1
	\]
	with $\Gamma^*=\Phi_{\NS}(\Gamma)$ and $K$ a finite group. For any $g\in\Gamma^*$ one has $\det{g}=\pm 1$. Put 
	$
	\Gamma^+=\{g\in\Gamma^*:\ \det{g}=1\}.
	$
	By \cite[Theorem 3.9]{Lazic} one has either $\Gamma^+=1$ or $\Gamma^+\cong\ZZ$. It suffices to consider only the last case. Here we have a short exact sequence
	\[
	1\to K\to\Gamma'=\Phi_{\NS}^{-1}(\Gamma^+)\to\Gamma^+\to 1.
	\]
	It is well known (and easy to show) that a finite-by-cyclic group is always virtually cyclic. This means that $\Gamma'$, and hence $\Gamma$, is either finite, or $\ZZ$ up to finite index. But all such groups are CAT(0) groups (this follows e.g. from the Bieberbach Theorem, see \cite[II.7, Remark 7.3]{BH}).
\end{proof}

\vspace{0.3cm}

\begin{proof}[Sketch of proof of Theorem \ref{thm: main technical}] Put
	\[
	\CAT=\pr\big (\Cone\cap\HH^n\big ),
	\]
	where $\pr: \Vect\to\Vect$ is the projection from the origin (inducing an isometry $\HH^n\to\BB^n$). Then $\CAT$ is a convex subset of $\BB^n$. The group $\Gamma$ acts on $\CAT$ with a fundamental domain $\Pi_\CAT$, which moreover has finitely many sides. We are going to show that $\CAT$ can be <<decorated>> a bit so that $\Gamma$ acts properly and cocompactly on the resulting CAT(0) space.
	
	Recall that a point $a\in\partial\overline{\BB}^n=\Sph^{n-1}$ is a {\it limit point} of a subgroup $\Gamma\subset\Mob(\BB^n)$ if there is a point $b\in\BB^n$ and a sequence $\{\gamma_i\in\Gamma\}_{i=1}^\infty$ such that $\{\gamma_i b\}_{i=1}^\infty$ converges to $a$. Let $C(\Gamma)$ denote the convex hull of the set of limit points of $\Gamma$ on $\overline{\BB}^n$. Note that this is a closed subset of $\overline{\BB}^n$ \cite[\S 12.1]{Rat}. Put 
	\[
	X=\CAT\cap C(\Gamma),\ \ \ \ \Pi=\Pi_\CAT\cap C(\Gamma).
	\]
\begin{description}
	\item[Step 1] There exists a finite family $U$ of horocusp regions with disjoint closures such that $\Pi\setminus U$ is compact. See paragraph \ref{subsec: step 1} for details.
	
	\item[Step 2] Put $U'=\bigcup_{\gamma\in\Gamma}\gamma(U)$. Step 1 shows that $\Gamma$ acts cocompactly on $X\setminus U'$.
	
	\item[Step 3] Besides, $\Gamma$ acts properly discontinuously on $\BB^n$, hence on $X$ and $X\setminus U'$ too. Since we assume that the kernel of the induced homomorphism
	\[
	\Phi: \Gamma\to\Isom(\BB^n)
	\]
	is finite, it suffices to show that $\Phi(\Gamma)$ acts properly on $\BB^n$. By \cite[Lemma 3.1.1]{Wolf} if $H$ and $K$ are subgroups of a group $G$ with $K$ compact and $G$ locally compact, then $H$ is properly discountinous on $G/K$ if and only if $H$ is discrete in $G$. Now take $G=\Isom(\HH^n)\cong\OO^+(1,n)$, $H=\Phi(\Gamma)$, and $K=\Stab(x)\cong\OO_n(\RR)$, where $x\in\HH^n$. Notice that $\OO^+(1,n)$ is transitive on $\HH^n$ and $\HH^n\cong\OO^{+}(1,n)/\OO_n(\RR)$, see \cite[I.2.24]{BH}.
	
	\item[Step 4] 
	The radii of the horoballs of $U'$ can be decreased such that we obtain a new collection $W$ of open horoballs with disjoint closures and $X\setminus W$ is a CAT(0) space. This is explained in paragraph \ref{subsec: step 4}.
	
	\item[Step 5]
	The action of $\Gamma$ on $X\setminus W$ clearly remains properly discountinuous. It also remains cocompact by Remark \ref{rem: shrinking preserves compactness}. This completes the proof.
\end{description}

\end{proof}

\begin{figure}[h!]
	\centering
	\includegraphics[width=1\linewidth]{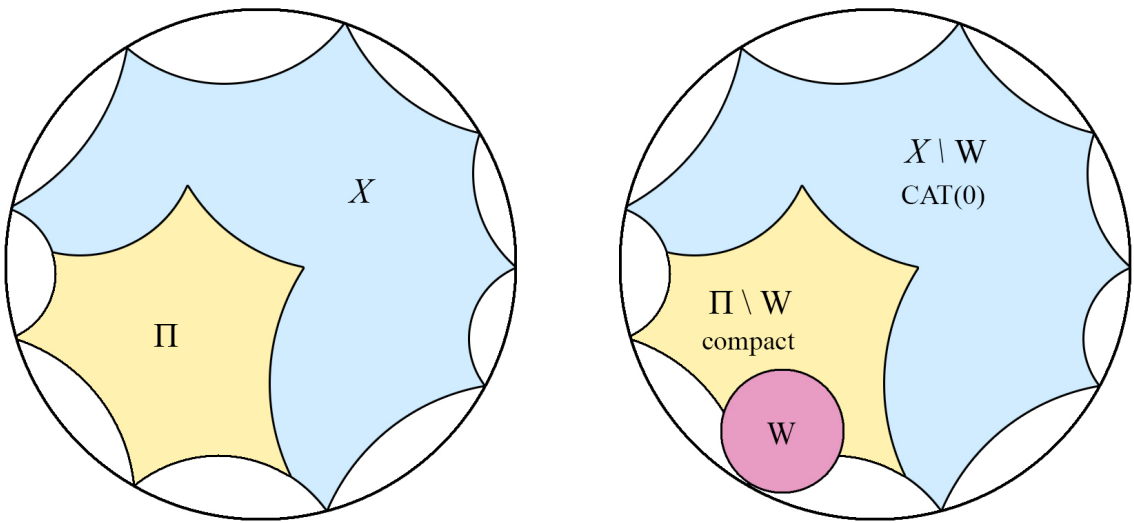}
	\label{pic:acnode crunode}
	\caption{Construction of a CAT(0) space $\XX_M$ (of course, the reader should not take this picture too seriously, as typically we must delete infinitely many horoballs).}
\end{figure}

In the next two sections we explain the difficult parts of the proof.

\subsection{Explanation of Step 4}\label{subsec: step 4}

Let $U=B_1\sqcup\ldots\sqcup B_N$, where $B_i$ are horocusp regions with disjoint closures, constructed in Step 1. By the definition of a horocusp region, for each $i$ the set $\cup_{\gamma\in\Gamma} \gamma(B_i)$ consists of pairwise disjoint balls. One can view the set
\[
U'=\bigcup_{\gamma\in\Gamma}\gamma(U)=\bigcup_{i=1}^N\bigcup_{\gamma\in\Gamma}\gamma(B_i)
\]
constructed in Step 2 as a finite collection of disconnected sets in a metric space. Clearly one can decrease the radii of $B_i$ so that $U'$ is a family of disjoint open horoballs. Denote this resulting family by $U''$. We are now going to show that $U''$ can be shrinked further so that $X\setminus U''$ is a CAT(0) space. First, note that $\HH^n\setminus U''$ is a complete CAT(0) space by the following general fact:

\begin{thm}[{\cite[II.11.27]{BH}}]\label{thm: induced CAT}
	Let $Y\subset\BB^n$ be a subspace obtained by deleting a family of disjoint open horoballs. When endowed with the induced length metric, $Y$ is a complete CAT(0) space.
\end{thm}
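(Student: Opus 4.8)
The plan is to prove the global statement via the Cartan--Hadamard theorem: a complete, simply connected length space that is locally CAT(0) is globally CAT(0) \cite[II.4.1]{BH}. Writing $Y=\BB^n\setminus\bigcup_i B_i$ for the complement of the disjoint open horoballs, I would first record that $Y$, with the induced length metric $\dist_Y$, is a locally compact length space, so that once completeness is established Hopf--Rinow \cite[I.3.7]{BH} guarantees that $Y$ is a proper geodesic space. Thus the proof splits into three independent verifications: completeness, simple connectivity, and local CAT(0).

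\emph{Completeness and simple connectivity.} Since the $B_i$ are open, $Y$ is closed in $\BB^n$, and $\BB^n$ with the hyperbolic metric is complete; moreover $\dist_Y$ dominates the restriction of the hyperbolic distance, and near every point of $Y$ the two metrics are bi-Lipschitz comparable (a neighbourhood in $Y$ is either an open subset of $\HH^n$ or a smooth manifold-with-boundary). Hence a $\dist_Y$-Cauchy sequence converges in $Y$, and $(Y,\dist_Y)$ is complete. For simple connectivity I would exhibit $Y$ as a deformation retract of $\BB^n$: each closed horoball $\overline{B_i}$ is strictly convex, so the map fixing $Y$ pointwise and sending $x\in B_i$ to the unique nearest point of the bounding horosphere $\partial B_i$ is a well-defined retraction $r\colon\BB^n\to Y$; sliding each $x$ to $r(x)$ along the hyperbolic geodesic inside $\overline{B_i}$ gives a strong deformation retraction (continuity across $\partial B_i$ is clear since $r(x)\to x$ there, and the $B_i$ are disjoint). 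As $\BB^n$ is contractible, so is $Y$; in particular it is simply connected.

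\emph{Local CAT(0), the crux.} Because the horoballs have pairwise disjoint closures and are locally finite in the interior of $\BB^n$ (any interior point has positive distance to $\Sph^{n-1}$, and only finitely many disjoint horoballs can reach a fixed interior neighbourhood), every point of $Y$ has a neighbourhood meeting at most one horosphere. This reduces the local statement to the model case $Y_1=\HH^n\setminus B$ of a single open horoball. At interior points $Y_1$ is locally isometric to $\HH^n$, which is CAT$(-1)$ and hence CAT(0). At a boundary point the essential geometry is that $\partial B$, with the metric induced from $\HH^n$, is isometric to the flat space $\EE^{n-1}$ and is \emph{convex} in $(Y_1,\dist_{Y_1})$: in the upper half-space model $\UU^n$ with $B=\{x_n>1\}$ one has $Y_1=\{x_n\le 1\}$, and lowering $x_n$ only lengthens paths, so a shortest path between horosphere points stays on $\{x_n=1\}$ and is a Euclidean straight segment. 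Thus $Y_1$ is a Riemannian manifold-with-boundary of constant curvature $-1$ whose boundary horosphere is convex, and by the comparison criterion for non-positively curved manifolds with convex boundary its length metric is locally CAT(0). Feeding completeness, simple connectivity and local CAT(0) into Cartan--Hadamard then yields that $(Y,\dist_Y)$ is a complete CAT(0) space.

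The main obstacle is exactly this last local verification at horosphere points: since the complement of a horoball is \emph{not} even locally convex in $\HH^n$ (geodesics between nearby points on opposite sides of the horosphere bulge into $B$), the $\dist_{Y_1}$-geodesics genuinely bend to run along the flat boundary, and one must check the CAT(0) comparison inequality for the resulting triangles. I would either invoke the standard convex-boundary comparison theorem, or, more hands-on, describe these bent geodesics explicitly (a hyperbolic arc up to $\{x_n=1\}$, a flat segment along it, a hyperbolic arc back down) and verify the CAT(0) four-point inequality using the flatness and convexity of the horosphere; carefully establishing the local finiteness of the horoball family in the interior is the other point that needs attention.
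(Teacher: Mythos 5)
This statement is not proved in the paper at all: it is quoted verbatim from Bridson--H\"afliger \cite[II.11.27]{BH} and used as a black box, so your reconstruction is necessarily a ``different route'' in the sense that it is the only actual argument on the table. Your outline is sound and is essentially the standard proof: completeness and contractibility (hence simple connectivity) via the nearest-point retraction of each horoball onto its bounding horosphere, local finiteness of a disjoint family of horoballs at interior points (correct, since any horoball reaching a fixed compact set contains a Euclidean ball of definite radius in the $\BB^n$ model), reduction to a single horoball, and then Cartan--Hadamard. You have also correctly isolated the crux, namely the local CAT(0) comparison at horosphere points, where the length-metric geodesics bend along the flat, convex horosphere. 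Two points deserve care. First, the ``convex-boundary comparison theorem'' you invoke is the Alexander--Berg--Bishop criterion for upper curvature bounds on manifolds with boundary, and the sign convention matters: the hypothesis is that the \emph{deleted} region is convex (equivalently, the second fundamental form of $\partial Y_1$ with respect to the normal pointing into $Y_1$ is negative semidefinite, and the boundary with its induced metric has curvature $\le 0$ --- here it is flat), not that $Y_1$ has geodesically convex boundary in the usual sense; your own description of the bent geodesics shows you mean the right thing, but the phrase ``manifolds with convex boundary'' as written points at the wrong hypothesis, and if you go the hands-on route you must actually verify the comparison inequality for the concatenated hyperbolic--Euclidean--hyperbolic geodesics rather than just describe them. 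Second, your reduction to a single horoball uses pairwise disjoint \emph{closures}, which is slightly stronger than the ``disjoint open horoballs'' in the statement; this matches how the theorem is actually applied in Step 4 of the paper (where the closures are explicitly arranged to be disjoint), but if two horospheres are tangent a neighbourhood of the tangency point meets both and your model case does not cover it.
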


Obviously, a convex subset of a CAT(0) space is itself a CAT(0) space when endowed with the induced metric. In view of Theorem \ref{thm: induced CAT}, to conclude that $X\setminus U''$ is a CAT(0) space we need to check its convexity in $\BB^n\setminus U''$ (for the induced length metric), possibly after decreasing radii of $U''$.

First let us mention the description of geodesics in the truncated hyperbolic space $\BB^n\setminus U''$.
\begin{thm}[{\cite[Corollary 11.34]{BH}}]\label{thm: truncated}
	Let $Y$ be as in Theorem \ref{thm: induced CAT}. A path $c: [a,b]\to Y$ parametrized by arc length is a geodesic in $Y$ if and only if it can be expressed as a concatenation of non-trivial paths $c_1,\ldots, c_n$ parametrized by arc length, such that:
	\begin{enumerate}
		\item each of the paths $c_i$ is either a hyperbolic geodesic or else its image is contained in one of the horospheres bounding $Y$ and in that horosphere it is a Euclidean geodesic;
		\item if $c_i$ is a hyperbolic geodesic the the image $c_{i+1}$ is contained in a horosphere and vice versa.
	\end{enumerate}
\end{thm}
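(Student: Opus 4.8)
The plan is to leverage the fact that $Y=\BB^n\setminus U''$ is already known to be a complete CAT(0) space (Theorem \ref{thm: induced CAT}), which lets me trade the characterization of \emph{global} geodesics for a purely local analysis. The key structural input is that in any CAT(0) space a local geodesic is automatically a global geodesic (a standard consequence of convexity of the metric, \cite{BH}); hence it suffices to identify the local geodesics of $Y$ and to check that the concatenations described in the statement are precisely these. Throughout I would work in the upper half-space model $\UU^n$ and, near a given deleted horoball, normalize by a hyperbolic isometry so that the horoball becomes the standard one based at $\infty$, say $\{x_n>h\}$; its bounding horosphere $\{x_n=h\}$ then carries the induced metric $h^{-2}(dx_1^2+\cdots+dx_{n-1}^2)$, which is flat, so that ``straight'' paths along it are exactly Euclidean geodesics. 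The two geometric facts I rely on are that horoballs are geodesically convex in $\HH^n$ and that this induced metric on the bounding horosphere is Euclidean.

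For the ``if'' direction I would show that a concatenation $c=c_1\ast\cdots\ast c_n$ of the stated form is a local geodesic. Away from the horospheres this is immediate, since there the metric is locally the hyperbolic one and the pieces $c_i$ are genuine hyperbolic geodesics. The only points requiring work are the transitions, where a hyperbolic piece meets a horospherical piece. In the normalized picture a hyperbolic geodesic approaching $\{x_n=h\}$ from below is a semicircle, and it is tangent to the horosphere precisely when it touches at its apex, where its tangent is horizontal and so matches the direction of the Euclidean geodesic along the horosphere; I would verify local minimality at such a junction by comparing with competing paths, using that the metric coefficient $1/x_n$ increases as one descends, so that no competitor dipping into the interior can be shorter, while convexity of the horoball forbids any genuine shortcut on the other side.

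For the ``only if'' direction I would take an arbitrary geodesic $c$, which is then a local geodesic, and analyze the closed contact set $T=\{t:\,c(t)\in\partial U''\}$. On each component of the open complement $[a,b]\setminus T$ the path lies in the region where the metric is hyperbolic, so it is a hyperbolic geodesic there. On a maximal interval where $c$ stays on a single horosphere $S_i$, local minimality together with flatness of $S_i$ forces $c$ to be a Euclidean geodesic of $S_i$, since the convexity of the horoball and the growth of the metric toward the interior rule out any shortening excursion off the horosphere. At each endpoint of such an interval the adjacent hyperbolic piece must meet $S_i$ tangentially: a transversal contact would leave a corner that can be strictly shortened, contradicting minimality. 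Assembling these observations gives the required alternation between hyperbolic and horospherical pieces.

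I expect the main obstacle to be exactly the transition analysis — establishing that tangential contact is both necessary and sufficient for local minimality — because it forces one to rule out all competing paths near a junction, including those that momentarily leave a horosphere and re-enter the interior. A secondary point, needed to obtain a \emph{finite} concatenation, is local finiteness: on any compact parameter interval the geodesic meets only finitely many horoballs (those reaching into a fixed compact region have Euclidean radius bounded below, and disjointness leaves room for only finitely many), and it cannot oscillate within a single flat horosphere, since a local geodesic there is a straight Euclidean segment. Combining the two directions with this finiteness yields the stated characterization.
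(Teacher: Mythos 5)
The paper gives no argument for this statement at all --- it is quoted from [BH, II.11.34] --- so there is no in-house proof to compare with; I am judging your attempt on its own. Your architecture is the right one: reduce to local geodesics via the CAT(0) property of $Y$, classify the local geodesics near the horospheres, and get a \emph{finite} alternating decomposition from local finiteness of the horoball family. You also correctly isolate the junction analysis as the crux. But at that crux your argument has a real gap. The mechanism you offer for local minimality of the horospherical pieces and of the tangential junctions --- ``the metric coefficient $1/x_n$ increases as one descends, so no competitor dipping into the interior can be shorter'' --- is not a valid principle: for two points of the horosphere $\{x_n=h\}$ at Euclidean distance $D$, the horospherical path has length $D/h$ while the hyperbolic geodesic between them has length roughly $2\log(D/h)$, so dipping down trades a larger conformal factor for a far shorter Euclidean length and wins decisively; it is excluded only because that geodesic enters the deleted horoball. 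The tool that actually closes this step is the nearest-point projection onto the \emph{closed} horoball (a convex subset of $\HH^n$), which is $1$-Lipschitz and sends $Y$ onto the bounding horosphere: this immediately shows the flat segment of a horosphere is a $Y$-geodesic between its endpoints, and a first-variation (or Alexandrov-angle) computation at the junction then settles the transitions. Without an input of this kind, both directions of your proof are incomplete exactly where you predicted the difficulty would be.

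A second point must be made explicit: the ``if'' direction, read literally with no tangency requirement at the junctions, is false, and your sketch silently restricts to tangential junctions. In $\UU^2$ with the single horoball $\{y>1\}$ deleted, the concatenation of the horospherical segment from $(0,1)$ to $(1,1)$ with the vertical hyperbolic segment from $(1,1)$ to $\bigl(1,\tfrac12\bigr)$ satisfies conditions (1) and (2) and alternates correctly, yet has length $1+\log 2\approx 1.69$, while $d_Y\bigl((0,1),(1,\tfrac12)\bigr)<1.46$ (follow the hyperbolic geodesic between the endpoints and replace its short excursion into the horoball by the horospherical chord). So the characterization requires the concatenation to be $C^1$ at each breakpoint --- the hyperbolic pieces tangent to the horospheres and the flat pieces continuing in the tangent direction --- and your proof should state that it is proving this corrected statement. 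Finally, in the ``only if'' direction, to rule out accumulation of junction times you need the observation that a nontrivial hyperbolic segment with both endpoints on the same horosphere necessarily passes through the open horoball (its apex lies strictly inside), so contact times with a fixed horosphere cannot be separated by purely hyperbolic arcs; your finiteness paragraph gestures at this but does not contain it.
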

Now pick two different points $x,y\in X\setminus U''$. Let $\ell$ be the geodesic of $\BB^n$ joining them. Note that $\ell\subset X$, as $X$ is convex in $\BB^n$. If $\ell\cap U''=\varnothing$, then $\ell\subset X\setminus U''$ and we are done. So, let us assume that $\ell$ intersects $U''$. The geodesic of $X\setminus U''$ is a concatenation of hyperbolic geodesics $\alpha_i$ and Euclidean geodesics $\beta_j$ lying on horospheres. Note that both endpoints of each $\alpha_i$ belong to $X$, hence all $\alpha_i$ lie in $X\setminus U''$ (as $X$ is convex in $\BB^n$ and $\alpha_i$ are hyperbolic geodesics of $\BB^n$). To make sure that $\beta_j$ lie in $X$ we can decrease the radius of each $B_i$ so that the antipodal point of its base point belongs to $X$. This will hold for all $\gamma(B_i)$ in fact, as $X$ is $\Gamma$-invariant. Denote by $W$ the resulting disjoint family of open horoballs. Now all $\beta_j$ are contained in $X$, so the whole geodesic between $x$ and $y$ is contained in $X\setminus W$. So, $X\setminus W$ is a CAT(0) space.


\subsection{Explanation of Step 1}\label{subsec: step 1}

We again need some definitions from metric geometry. A group $G\subset\Mob(\UU^n)$ is called {\it  elementary} if $G$ has a finite orbit in $\widehat{\EE}^n$. An elementary group $G$ is said to be of {\it parabolic type} if $G$ fixes a point on $\widehat{\EE}^n$ and has no other finite orbits.

Let $\Gamma\subset\Mob(\UU^n)$ be a discrete subgroup such that $\infty$ is fixed by a parabolic element of $\Gamma$. Then $\Gamma_\infty=\Stab_\Gamma(\infty)$ is an elementary group of parabolic type. Thus $\Gamma_\infty$ corresponds under Poincar\`{e} extension (see above) to a discrete subgroup of $\Isom(\EE^{n-1})$ \cite[Theorem 5.5.5]{Rat}. We will need the following classical {\bf Bieberbach Theorem}, see e.g. \cite[Theorem 5.4.6, 7.4.2]{Rat}
\begin{theorem*}
	For any discrete subgroup $\Xi\subset\Isom(\EE^n)$ there exists
	\begin{itemize}
		\item a free abelian subgroup $\Xi'\subset\Xi$ of rank $m$ and finite index;
		\item an $\Xi$-invariant $m$-plane $Q\subset\EE^n$ such that $\Xi'$ acts effectively on $Q$ as a discrete group of translations.
	\end{itemize}
	Moreover, the group $\Xi$ is a crystallographic group of $Q$, meaning that $Q/\Xi$ is compact.
\end{theorem*}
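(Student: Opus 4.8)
The plan is to reduce everything to Bieberbach's First Theorem, namely the assertion that the linear part of $\Xi$ is a \emph{finite} subgroup of $\OO(n)$. Write every isometry uniquely as $g=(A,a)$, acting by $g(x)=Ax+a$ with $A\in\OO(n)$ and $a\in\RR^n$, and let $r\colon\Isom(\EE^n)\to\OO(n)$, $r(g)=A$, be the homomorphism recording the linear part. Its kernel intersected with $\Xi$ is the subgroup $T=\Xi\cap\ker r$ of pure translations, which, being discrete in $\RR^n$, is free abelian of some rank $m$. Once we know that $r(\Xi)$ is finite, $T$ will have finite index in $\Xi$, and the remaining assertions (the invariant plane and cocompactness) will follow by soft arguments; so the entire difficulty is concentrated in the finiteness of $r(\Xi)$.

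The heart of the matter is a pair of commutator contraction estimates. First, for $A,B\in\OO(n)$ one has $\|[A,B]-I\|=\|AB-BA\|\leqslant 2\,\|A-I\|\,\|B-I\|$, using the identity $AB-BA=(A-I)(B-I)-(B-I)(A-I)$ and the fact that right multiplication by an orthogonal matrix preserves the operator norm; hence if the linear parts of two elements both lie within distance $\varepsilon<\tfrac12$ of the identity, the linear part of their commutator lies strictly closer to $I$. Second, a direct computation shows that for a translation $t_a=(I,a)$ and any $h=(B,b)$ the commutator $[t_a,h]=t_{(I-B)a}$ is again a pure translation, shrunk by the factor $\|I-B\|$. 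Combining these with the discreteness of $\Xi$ gives the finiteness of $r(\Xi)$: taking $g=(A,a)\in\Xi$ with $\|A-I\|<\tfrac12$ and iterating the second estimate on $g$ against any $t\in T$ produces translations in $\Xi$ whose vectors are contracted geometrically by $\|I-A\|$, which by discreteness must terminate at the identity, forcing $(I-A)$ to annihilate every vector of $T$; a companion application of the first estimate shows that the linear parts of such elements cannot accumulate at $I$. As this holds near every point of the compact group $\OO(n)$, the image $r(\Xi)$ is finite. \textbf{This step is the main obstacle}, precisely because $r$ is not proper — the whole fibre over $I$ is the infinite group $T$ — so compactness of $\OO(n)$ alone is useless, and one genuinely needs the contraction mechanism to feel the discreteness of $\Xi$.

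With $r(\Xi)$ finite I would set $\Xi'=T$, a free abelian group of rank $m$ and finite index $|r(\Xi)|$ in $\Xi$. Let $V=\mathrm{span}_{\RR}\,T\subset\RR^n$ be the $m$-dimensional space of translation directions; conjugation in $\Xi$ permutes $T$, so $r(\Xi)$ preserves $V$ and hence $V^{\perp}$. To produce the invariant plane, push the affine action of $\Xi$ forward to $\RR^n/V\cong V^{\perp}$, where $T$ acts trivially, obtaining an action of the \emph{finite} group $\Xi/T$ by affine isometries; averaging over a single orbit yields a fixed point, whose preimage is an affine $m$-plane $Q=x_0+V$ invariant under all of $\Xi$. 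On $Q$ the group $\Xi'=T$ acts as a full-rank lattice of translations, hence effectively, discretely, and cocompactly; and since $[\Xi:T]<\infty$, the quotient $Q/\Xi$ is compact as well, which is the asserted crystallographic property. I would relegate the routine verifications — that $T$ is discrete of full rank in $V$, and that the finite-index and compactness statements assemble correctly — to the standard references, keeping the commutator estimates as the one genuinely substantive ingredient.
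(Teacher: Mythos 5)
The paper does not actually prove this statement: it is quoted as the classical Bieberbach theorem with a pointer to Ratcliffe [Theorems 5.4.6 and 7.4.2], so your argument can only be measured against the standard proof there.

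There is a genuine gap, and it is fatal to the strategy rather than to a single step: you reduce everything to the claim that the linear-part homomorphism $r\colon\Xi\to\OO(n)$ has finite image for \emph{every} discrete subgroup $\Xi\subset\Isom(\EE^n)$, and that claim is false. Take $n=3$ and $\Xi=\langle g\rangle$ with $g=(A,e_3)$ a screw motion whose rotational part $A$ is a rotation about the $x_3$-axis by an irrational multiple of $\pi$. Then $\Xi\cong\ZZ$ is discrete (the translation parts of $g^k$ escape to infinity), yet $r(\Xi)=\langle A\rangle$ is infinite and dense in a circle subgroup of $\OO(3)$, and your candidate $\Xi'=T=\Xi\cap\ker r$ is \emph{trivial}, hence of infinite index. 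Finiteness of $r(\Xi)$ is Bieberbach's first theorem for \emph{cocompact} (crystallographic) groups; the statement you are proving concerns arbitrary discrete subgroups, where the theorem is still true but with a weaker and subtler conclusion: in the example one takes $\Xi'=\Xi$ and $Q$ the $x_3$-axis, on which $g$ restricts to a translation. This is the content your reading misses --- the elements of $\Xi'$ are only required to act as translations \emph{on the invariant plane $Q$}, not on all of $\EE^n$; they may rotate the orthogonal directions. The paper relies on exactly this feature in the proof of Lemma \ref{lem: apanasov lemma}, where the elements $\gamma^\circ$ of the finite-index free abelian subgroup have the form $\gamma^\circ(x)=A(x)+v$ with $A$ a possibly nontrivial orthogonal map fixing $\EE^r$ pointwise. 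The standard proof (Ratcliffe, Theorem 5.4.6) instead studies the closure of $r(\Xi)$, which is a compact Lie group, and builds $\Xi'$ from the elements whose linear parts lie near the identity component; your commutator-contraction estimates do occur there, but only to show that near-identity linear parts act trivially on the span of the relevant translation directions --- not to show that $r(\Xi)$ is finite. As written, your second and third paragraphs prove (a version of) Bieberbach's theorem for crystallographic groups, which is not the statement needed here.
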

Now take $\Xi=\Gamma_\infty$. By the Bieberbach theorem there is a $\Gamma_\infty$-invariant $m$-plane $Q$ of $\EE^{n-1}$ with $Q/\Gamma_\infty$ compact. Denote by $N(Q,\varepsilon)$ the $\varepsilon$-neighborhood of $Q$ in $\EE^n$. Then $N(Q,\varepsilon)$ is $\Gamma_\infty$-invariant. Set
\[
U(Q,\varepsilon)=\overline{\UU}^n\setminus\overline{N}(Q,\varepsilon).
\]
This is an open $\Gamma_\infty$-invariant subset of $\overline{\UU}^n$. It is called a {\it cusped region} for $\Gamma$ based at $\infty$ if for all $\gamma\in\Gamma\setminus\Gamma_\infty$ we have
\begin{equation}\label{eq: precise invariance}
U(Q,\varepsilon)\cap \gamma U(Q,\varepsilon)=\varnothing.
\end{equation}
Viewed in $\BB^n$ model when $m=n-1$, the sets $U(Q,\varepsilon)$ are just horocusp regions based at $\infty$, as defined in Subsection \ref{subsec: hyperbolic prelim}. Let $c\in\widehat{\EE}^{n-1}$ be a point fixed by a parabolic element of a discrete subgroup $\Gamma\subset\Mob(\UU^n)$. A subset $U\subset\overline{\UU}^n$ is a {\it cusped region} for $\Gamma$ based at $c$ if upon conjugating $\Gamma$ so that $c=\infty$, the set $U$ transforms to a cusped region for $\Gamma$ based at $\infty$. A {\it cusped limit point} of $\Gamma$ is a fixed point $c$ of a parabolic element of $\Gamma$ such that there is a cusped region $U$ for $\Gamma$ based at $c$.

Recall that by $L(\Gamma)$ we denoted the set of limit points of $\Gamma$ in $\overline{\BB}^n$, and by $C(\Gamma)$ the convex hull of $L(\Gamma)$ in $\overline{\BB}^n$. Now Step 1 is the content of the following claim\footnote{It is clear that at this step we may assume that $\Gamma$ acts effectively.}.

\begin{prop}\label{prop: key prop}
	Let $\Gamma\subset\Isom(\BB^n)$ be a discrete subgroup, and $Z$ be a $\Gamma$-invariant convex subset of $\BB^n$. Assume that the action of $\Gamma$ on $Z$ has a finitely sided (locally finite) polyhedral fundamental domain $\Pi$. Then there exists a finite union $V$ of horocusp regions with disjoint closures such that $(\Pi\cap C(\Gamma))\setminus V$ is compact in $Z$.
\end{prop}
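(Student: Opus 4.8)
The plan is to treat $\Gamma$ as a geometrically finite discrete group and to isolate the non-compact parts of $\Pi\cap C(\Gamma)$ as finitely many cusps, following Ratcliffe's \cite{Rat} account of geometrically finite groups. First I would observe that $\Pi\cap C(\Gamma)$ is a closed convex subset of $Z\subset\BB^n$ (note $C(\Gamma)\subset\overline Z$, since orbits lie in the $\Gamma$-invariant convex set $Z$ and hence so do their limits). Therefore its only possible failure of compactness comes from points escaping to the ideal boundary $\Sph^{n-1}=\partial\overline{\BB}^n$, and the whole problem reduces to controlling the ideal set $E:=\overline{\Pi\cap C(\Gamma)}\cap\Sph^{n-1}$, closure taken in $\overline{\BB}^n$: I must show $E$ consists of finitely many cusped limit points that can be excised by horocusp regions.

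The key step is to identify $E$. Since $\overline\Pi$ meets $\Sph^{n-1}$ only in the finitely many ideal faces cut out by the finitely many sides of $\Pi$, the set $E$ lies in a finite union of closed ideal faces; moreover $E\subset L(\Gamma)$, because $C(\Gamma)$ meets $\Sph^{n-1}$ exactly in $L(\Gamma)$. Now I would invoke the dichotomy for limit points of a geometrically finite group (the hypothesis that $\Pi$ is finitely sided being precisely a definition of geometric finiteness): every point of $L(\Gamma)$ is either a conical limit point or a bounded parabolic fixed point. A conical limit point $a$ cannot lie in $\overline\Pi$: a geodesic ray ending at $a$ crosses infinitely many distinct tiles $\gamma\overline\Pi$, so by local finiteness $a$ lies in the closure of infinitely many translates and cannot be an ideal point of the single tile $\overline\Pi$. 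Hence every point of $E$ is a bounded parabolic fixed point, i.e.\ a cusped limit point in the terminology above; and since $E$ is contained in finitely many ideal faces of $\Pi$, these points are finite in number, say $c_1,\dots,c_N$.

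Having located the cusps, I would attach to each $c_j$ a horocusp region. By definition of a cusped limit point there is a cusped region based at $c_j$, and applying the Bieberbach theorem to the parabolic stabilizer $\Gamma_{c_j}$ (as in the discussion preceding the statement) yields, in the $\BB^n$-model, an honest horoball $V_j$ based at $c_j$ satisfying $\gamma V_j\cap V_j=\varnothing$ for $\gamma\notin\Gamma_{c_j}$; the bounded-parabolic condition guarantees that the part of $C(\Gamma)$ running into $c_j$ is horospherically bounded and thus genuinely swallowed by such a horoball. Because the $c_j$ are finitely many distinct boundary points, after shrinking radii I may assume the closures $\overline{V_1},\dots,\overline{V_N}$ are pairwise disjoint; set $V=\bigcup_j V_j$. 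Finally I would verify compactness of $(\Pi\cap C(\Gamma))\setminus V$: its closure in $\overline{\BB}^n$ can meet $\Sph^{n-1}$ only in $\{c_1,\dots,c_N\}$, but each $c_j$ has been removed together with a horoball neighbourhood, so the closure avoids $\Sph^{n-1}$ entirely; a closed subset of $\BB^n$ whose closure misses the ideal boundary sits inside a compact ball, hence is compact.

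The main obstacle is the key step of the second paragraph: rigorously excluding conical limit points from $\overline\Pi$ and confirming that at each bounded parabolic point the convex core is captured by a single horoball (rather than only by a lower-rank cusped region). This is exactly the point at which geometric finiteness of $\Gamma$ — encoded in the finiteness of the number of sides of $\Pi$ — must be fed in through Ratcliffe's structure theory of cusped regions and the classification of limit points, and it is where most of the technical care is required.
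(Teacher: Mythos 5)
Your overall strategy matches the paper's: show that the ideal points of $\Pi\cap C(\Gamma)$ form a finite set of cusped limit points (your exclusion of conical limit points by counting tiles meeting a ball is exactly the paper's argument), then excise horoballs. But there is a genuine gap in your final compactness step, and it is precisely the point you flag as requiring ``most of the technical care'' without actually supplying it. You conclude that since each $c_j$ ``has been removed together with a horoball neighbourhood, the closure avoids $\Sph^{n-1}$ entirely.'' An open horoball $V_j$ based at $c_j$ is \emph{not} a neighbourhood of $c_j$ in $\overline{\BB}^n$: the closure of $\overline{\BB}^n\setminus V_j$ still contains $c_j$, so a sequence in $(\Pi\cap C(\Gamma))\setminus V$ could a priori converge to $c_j$ tangentially along $\partial V_j$ without ever entering $V_j$. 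The paper addresses exactly this (and even remarks that it ``was not clear a priori''): passing to $\UU^n$ with $c_j=\infty$, it uses the Bieberbach plane $Q$ for $\Gamma_\infty$ to show $L(\Gamma)\subset\overline{N}(Q,r)$, hence $C(\Gamma)\subset\overline{p^{-1}(\overline{N}(Q,r))}$ for the vertical projection $p$; then, by Ratcliffe's Lemma 2 of \S 12.3, an unbounded sequence in $\Pi$ staying over a bounded Euclidean neighbourhood of $Q$ must have $n$-th coordinate tending to $\infty$, so it eventually enters the horoball $B_j$ --- a contradiction. Your phrase ``horospherically bounded and thus genuinely swallowed'' gestures at this, but no argument is given, and the containment of $C(\Gamma)$ over $\overline{N}(Q,r)$ is the missing ingredient that makes it work.

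A secondary issue: you invoke the Beardon--Maskit dichotomy (every limit point of a geometrically finite group is conical or bounded parabolic), taking the finite-sidedness of $\Pi$ as ``precisely a definition of geometric finiteness.'' But $\Pi$ is only a fundamental polyhedron for the action on the invariant convex set $Z$, not on all of $\BB^n$, so the standard theory does not apply verbatim. The paper instead proves directly (its key lemma, adapted from Apanasov's Lemma 4.10) that each point of $\overline{\Pi}\cap L(\Gamma)$ is a cusped limit point: it applies the Bieberbach theorem to $\Gamma_\infty$, shows the translation lattice $\Gamma^\circ$ is nontrivial, and treats the full-rank and lower-rank cases separately to exhibit an actual cusped region satisfying the disjointness condition. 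That construction is what licenses the existence of the horoballs $B_j$ you take for granted, so you should either prove the dichotomy in this relative setting or argue as the paper does.
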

In what follows $\overline{\Pi}$ will denote the closure of fundamental polyhedron $\Pi$ in $\overline{\BB}^n$. The most importing step in the proof Proposition \ref{prop: key prop} is the following Lemma.
\begin{lem}[cf. {\cite[Lemma 4.10]{Apanasov}}]\label{lem: apanasov lemma}
	Under assumptions of Proposition \ref{prop: key prop} the set $P=\overline{\Pi}\cap L(\Gamma)$ is a finite set of cusped limit points of $\Gamma$. 
\end{lem}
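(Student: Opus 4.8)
The plan is to prove the two assertions of the lemma separately: first that every point of $P=\overline{\Pi}\cap L(\Gamma)$ is a parabolic fixed point admitting a cusped region (hence a cusped limit point), and second that $P$ is finite. The finiteness will follow cheaply from the finitely-sidedness of $\Pi$ once the local picture at each point of $P$ is understood, so the bulk of the work is local, near a single boundary point.

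The first step I would carry out is to exclude all limit points in $\overline{\Pi}$ except parabolic fixed ones. Suppose $a\in\overline{\Pi}\cap L(\Gamma)$ were a conical (radial) limit point. Then there is a geodesic ray in $\BB^n$ ending at $a$ and a sequence of distinct $\gamma_i\in\Gamma$ moving points of the ray into a fixed compact set; translating back, infinitely many distinct tiles $\gamma_i\overline{\Pi}$ would meet a fixed compact subset of $Z$, contradicting the local finiteness of the tessellation $\{\gamma\overline{\Pi}\}_{\gamma\in\Gamma}$. A limit point of $\overline{\Pi}$ that is not conical must be fixed by a non-trivial element of $\Gamma$; a loxodromic stabilizer is ruled out the same way, since the iterates $g^n\overline{\Pi}$ under a loxodromic $g\in\Stab_\Gamma(a)$ would again accumulate on an interior point and violate local finiteness. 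Hence each $a\in P$ is a parabolic fixed point of $\Gamma$.

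Next I would produce the cusped region at such a point. Fixing $a\in P$ and conjugating so that $a=\infty$ in $\UU^n$, the stabilizer $\Gamma_\infty=\Stab_\Gamma(\infty)$ is an elementary group of parabolic type, and the Bieberbach Theorem supplies a $\Gamma_\infty$-invariant $m$-plane $Q\subset\EE^{n-1}$ with $Q/\Gamma_\infty$ compact, giving the candidate sets $U(Q,\varepsilon)$. The key point is to check that for a suitably large choice (i.e. after shrinking $\varepsilon$) the disjointness condition \eqref{eq: precise invariance}, $U(Q,\varepsilon)\cap\gamma U(Q,\varepsilon)=\varnothing$ for all $\gamma\in\Gamma\setminus\Gamma_\infty$, actually holds, so that $U(Q,\varepsilon)$ is a genuine cusped region and $a$ a cusped limit point. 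Here the finitely-sided hypothesis on $\Pi$ is exactly what is needed: only finitely many sides of $\overline{\Pi}$ abut the ideal vertex $a$, which forces $\Gamma_\infty$ to act cocompactly on the part of $L(\Gamma)\setminus\{a\}$ lying near $a$, and this boundedness (the bounded-parabolic property) is what lets one raise $U(Q,\varepsilon)$ high enough to separate it from all its non-stabilizing translates.

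Finally, for the finiteness of $P$, I would invoke finitely-sidedness directly: $\overline{\Pi}$ has only finitely many ideal vertices, i.e. boundary points pinched by the closures of the sides meeting there. The local analysis above shows that every $a\in P$ is precisely such a pinch point, so $P$ is contained in this finite set and is therefore finite. The main obstacle is the middle step: merely knowing that $a$ is a parabolic fixed point is not enough, and one must genuinely extract the cocompactness of $\Gamma_\infty$ on the link of $a$ and the separation \eqref{eq: precise invariance} from the finitely-sided structure of the polyhedron near its ideal vertex. This is the technical heart of the argument and is where the detailed geometry of the (Dirichlet) fundamental polyhedron, as in \cite{Apanasov} and \cite{Rat}, has to be used.
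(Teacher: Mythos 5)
Your overall architecture matches the paper's: rule out conical limit points of $\overline{\Pi}$ via local finiteness of the tessellation, then use the Bieberbach Theorem at each remaining boundary point to manufacture a cusped region, and deduce finiteness from the finitely many sides. The first step is essentially the paper's argument (though for loxodromic fixed points it is cleaner to observe that they are themselves conical limit points, already excluded, rather than to iterate $g^n\overline{\Pi}$). However, there is a genuine gap exactly where you flag ``the technical heart'': you assert without argument both (i) that a non-conical limit point of $\overline{\Pi}$ must be fixed by a parabolic element of $\Gamma$, and (ii) that finitely-sidedness lets one adjust $U(Q,\varepsilon)$ until the precise-invariance condition (\ref{eq: precise invariance}) holds. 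Neither is automatic; your appeal to ``cocompactness of $\Gamma_\infty$ on the link of $a$'' is the bounded-parabolic property of geometrically finite groups, which is essentially the conclusion being sought, so it cannot simply be invoked.

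The paper closes both points with a single coset-counting argument that your sketch is missing. Conjugate so that $p=\infty$ in $\UU^n$. Since $\Pi$ has finitely many sides, only finitely many points $\gamma_0(\infty),\ldots,\gamma_k(\infty)$ of $P$ lie in the orbit of $\infty$; writing $\Gamma_\infty=\Gamma^\circ\cup\Gamma^\circ g_1\cup\ldots\cup\Gamma^\circ g_m$ with $\Gamma^\circ$ free abelian of rank $r$ (Bieberbach), every $\gamma\in\Gamma$ with $\infty\in\gamma(P)$ lies in one of the finitely many cosets $\Gamma^\circ g_j\gamma_i^{-1}$. If $\Gamma^\circ$ were trivial there would be only finitely many such $\gamma$, so a neighborhood of $\infty$ would meet only finitely many tiles and $\infty$ would not be a limit point; hence $\Gamma^\circ\neq 1$, which is precisely what establishes (i). For (ii), the same decomposition shows that every tile $\gamma(\Pi)$ with $\gamma\notin\Gamma_\infty$ lies in one of finitely many Euclidean-bounded half-spaces $H_s$ or in a $\Gamma^\circ$-image of one; the Bieberbach normal form (\ref{eq: Bieber}) then confines all of these to a bounded Euclidean neighborhood of the invariant $r$-plane $W$, so a suitable cusped region $U(Q,\varepsilon)$ misses them all and (\ref{eq: precise invariance}) holds (in the extreme case $r=n-1$ one instead quotes \cite[Theorem 3.15]{Apanasov}, whose hypothesis is exactly the non-triviality of $\Gamma^\circ$ just proved). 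Without some version of this argument your proposal does not yet prove the lemma.
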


Before proving it, recall that a {\it conical limit (approximation) point} for a subgroup $\Gamma\subset\Mob(\BB^n)$ is a point $a\in\partial\overline{\BB}^n$ such that there exists a point $x\in\BB^n$, a sequence $\{\gamma_i\in\Gamma\}_{i=1}^\infty$, a geodesic ray $\beta\subset\BB^n$ ending at $a$ and $r\in\RR_{>0}$ such that $\{\gamma_i x\}_{i=1}^\infty$ converges to $a$ in $N(\beta,r)$. It is easy to show that this definition does not depend on a choice of $\beta$ and $x$ (see e.g. \cite[Theorem 12.2.2, 12.2.3]{Rat}), so in what follows we will choose $x=0$ and a convenient geodesic ray.

\begin{ex}
	A fixed point $a$ of any loxodromic element $\gamma\in\Gamma$ is always a conical limit point. Indeed, replacing $\gamma$ with $\gamma^{-1}$ we may assume that $a$ is the attractive fixed point. Then for any point $x$ on the axis of $\gamma$ the sequence $\{\gamma^ix\}_{i=1}^\infty$ converges to $a$. 	
\end{ex}

\begin{proof}[Proof of Lemma \ref{lem: apanasov lemma}]
	First, $P$ contains no loxodromic fixed points, as it cannot contain conical limit  points. Indeed, assume that $a\in P$ is a conical limit point. Pick a point $u\in\Pi$. Then there is a geodesic ray $\beta=[u,a)$ in $\Pi$. Let $\{\gamma_i\}_{i=1}^\infty$ be a sequence such that $\gamma_i(0)$ converges to $a$ in some $N(\beta,s)$. The condition $\dist(\gamma_i(0),\beta)<s$ implies that we have $\gamma_i^{-1}(\beta)\cap\overline{B(0,s)}\ne\varnothing$, hence for an infinite sequence $\{\delta_i\}_{i=1}^\infty$ we have $\delta_i(\Pi)\cap\overline{B(0,s)}\ne\varnothing$, which contradicts the local finiteness of $\Pi$.
	
	Suppose $p\in P$. We now work in $\UU^n$ model and may assume that $p=\infty$. As $\Pi$ has finitely many sides, there are finitely many points in $P$ lying in $\Gamma$-orbit of $p$. Let these points be $\gamma_i(\infty)$ where $\gamma_0=\id,\ldots,\gamma_k\in\Gamma$. Put $\Gamma_\infty=\Stab_\Gamma(\infty)$. This group acts by Euclidean isometries of $\EE^{n-1}$. By the Bieberbach theorem there exists a free abelian subgroup $\Gamma^\circ\subset\Gamma_\infty$ of finite index $m$ and rank $r\leqslant n-1$, thus there are $g_1,\ldots,g_m\in\Gamma_\infty$ such that
	\[
	\Gamma_\infty=\Gamma^\circ\cup \Gamma^\circ g_1\cup\ldots\cup \Gamma^\circ g_m.
	\]
	Let $\gamma\in\Gamma$ be an element such that $\infty\in \gamma(P)$. Then for some $\gamma_i$ one has $\gamma\gamma_i\in\Gamma_\infty$. Therefore $\gamma$ lies in one of finitely many sets $\Gamma^\circ g_j\gamma_i^{-1}$. This implies that $\Gamma^\circ$ is not trivial: otherwise a neighborhood of $\infty$ would intersect only finite number of polyhedra $\gamma(\Pi)$, and hence $\infty\notin L(\Gamma)$. 
	
	If $r=n-1$, then all $U(Q,\varepsilon)$ are just horoballs based at $p$. It is not difficult to show that for $\varepsilon$ big enough\footnote{Here we consider horoballs as half-spaces $\{x\in\UU^n:\ x_n>\varepsilon\}$, hence big $\varepsilon$ corresponds to a <<small>> horoball in the Poincare ball model $\BB^n$,} they satisfy $U(Q,\varepsilon)\cap\gamma U(Q,\varepsilon)=\varnothing$ for all $\gamma\notin\Gamma_\infty$, see e.g. \cite[Theorem 3.15]{Apanasov} (note that this theorem is applicable as we proved that $\Gamma^\circ\ne\id$, so $\Gamma$ contains Euclidean translations). In particular, $p$ is a cusped point. Let $r< n-1$, and denote by $W$ a minimal $\Gamma_\infty$-invariant $r$-plane in $\EE^{n-1}$. Recall that $\Gamma^\circ$ is generated by $r$ Euclidean translations by linearly independent vectors $v_i\in\EE^{n-1}$. Let $\EE^r=\langle v_1,\ldots,v_r\rangle$, $\EE^{n-1}=\EE^r\times\EE^{n-1-r}$. Then we may assume that any $\gamma^\circ\in \Gamma^\circ$ has the form
	\begin{equation}\label{eq: Bieber}
	\gamma^\circ(x)=A(x)+v,\ \ \ \text{where}\ A\in\OO_{n-1}(\RR),\ \ \ A=\id\ \text{on}\ \EE^{r},\ \ \ A(\EE^{n-1-r})=\EE^{n-1-r},\ \ \ v\in\EE^r.
	\end{equation}
	The previous paragraph shows that for all $\gamma\notin\Gamma_\infty$ polyhedron $\gamma(\Pi)$ lies in one of finitely many hyperbolic half-spaces $H_s$, $s=0,\ldots,mk$, or in the images $\gamma^\circ(H_s)$ under $\gamma^\circ\in\Gamma^\circ$. Note that $H_s$ are Euclidean-bounded. From (\ref{eq: Bieber}) we see that for some $d>0$ all $\gamma^\circ(H_s)$ lie inside the set
	\[
	\big\{x\in\EE^{n-1}:\ \sum_{i=r+1}^{n-1}x_i^2\leqslant d\big \}.
	\]
	Therefore, we get that all polyhedra $\gamma(\Pi)$, $\gamma\notin\Gamma_\infty$, belong to a bounded Euclidean neighborhood of $W$, i.e. they lie outside some cusped region which satisfies (\ref{eq: precise invariance}) then. Hence, $p$ is a cusped limit point.
\end{proof}

\begin{proof}[Proof of Proposition \ref{prop: key prop}](cf. \cite[Theorem 12.4.5]{Rat})
	
	\begin{itemize}
		\item Let $\overline{\Pi}\cap L(\Gamma)$ consist of cusped limit points $c_1,\ldots,c_m$. Choose a proper (i.e. non-maximal) cusped region $U_i$ for $\Gamma$ based at $c_i$ for each $i$ such that $\overline{U}_1,\ldots, \overline{U}_m$ are disjoint and $\overline{U}_i$ meets just the sides of $\Pi$ incident with $c_i$. Further, let $B_i$ be a horoball based at $c_i$ and contained in $U_i$ such that if $gc_i=c_j$ then $gB_i=B_j$. Then $B_i$ is a proper horocusped region for $\Gamma$ based at $c_i$. 
		\item Put $V=\cup_i B_i$ and
		\[
		K=(\Pi\cap C(\Gamma))\setminus V.
		\]
		As $C(\Gamma)$ is closed in $\overline{\BB}^n$ \cite[\S 12.1]{Rat} (and $\Pi$ is closed in $\BB^n$ by definition), the set $K$ is closed in $\BB^n$. Let us show that it is bounded. Assume the contrary and let $\{x_i\}_{i=1}^\infty$ be an unbounded sequence of points in $K$. By passing to a subsequence, we may assume that $\{x_i\}$ converges to a point $a\in\partial\overline{\BB}^n$. Then $a$ is in the set
		\[
		\overline{\Pi}\cap C(\Gamma)\cap\partial \overline{\BB}^n=\overline{\Pi}\cap L(\Gamma).
		\]
		We conclude that $a=c_j$ for some $j$. 
		\item Let us pass to the upper half-space model $\UU^n$ and conjugate $\Gamma$ so that $a=\infty$. Recall that by construction this is a cusped limit point. The Bieberbach Theorem gives us $Q$ --- a $\Gamma_\infty$-invariant $m$-plane of $\EE^{n-1}$ such that $Q/\Gamma_\infty$ is compact. We claim that there exists $r>0$ such that $L(\Gamma)\subset\overline{N}(Q,r)$. Indeed, take $U(Q,r)$ to be a cusped region based at $a$. Then we have  $U(Q,r)\subset\UU^n\cup O(\Gamma)$, where $O(\Gamma)=\partial\overline{\BB}^n\setminus L(\Gamma)$ is the {\it ordinary set} of $\Gamma$. For the proof of this easy statement see \cite[\S 12.3, Lemma 1]{Rat}. Taking complements in $\overline{\UU}^n$, we get $L(\Gamma)\subset\overline{N}(Q,r)$.
		
		\item Now denote by $p: \UU^n\to\EE^{n-1}$ the vertical projection. Since $C(\Gamma)$ is the intersection of all hyperbolic convex subsets of $\overline{\UU}^n$ containing $L(\Gamma)$, and an $r$-neighborhood of a convex set is convex, one has $C(\Gamma)\subset \overline{p^{-1}(\overline{N}(Q,r))}$, so $\{x_i\}_{i=1}^\infty\subset K\subset C(\Gamma)$ implies
		\[
		\{x_i\}_{i=1}^\infty\subset p^{-1}(\overline{N}(Q,r)).
		\]
		As in \cite[Lemma 2, \S 12.3]{Rat} one can show\footnote{Note that in the cited lemma it is in fact not important that $\Pi$ is a fundamental polyhedron for the action of $\Gamma$ on the whole $\UU^n$.} that $\dist_{\EE^n}(x_i,Q)\to\infty$, so the sequence of $n$-th coordinates of $x_i$ also converges to $\infty$, which means that $x_i$ is in $B_j$ for all sufficiently large $i$ (that was {\it not} clear a priori, since, informally speaking, $x_i$ could converge to $a$ ``tangentially'' to $B_j$, without entering this horoball). But this is is a contradiction, since $K$ is disjoint from $B_j$. Thus $K$ is bounded, hence also compact.
	\end{itemize}
\end{proof}

\begin{rem}\label{rem: shrinking preserves compactness}
	Note that after finding horoballs $B_i$ corresponding to cusped regions $U_i$, we can further shrink them if needed, and the proof of the compactness of $(\Pi\cap C(\Gamma)\setminus V)$ still remains valid. 
\end{rem}

\section{Further examples}\label{sec: examples}

Here we discuss some examples where the Kawamata-Morrison cone conjecture is known to be true.

\begin{ex}[\bf Calabi-Yau pairs]
	Let $X$ be a smooth complex algebraic variety and $\Delta=\sum_i a_i\Delta_i$ be an effective $\RR$-divisor on $X$ with simple normal crossings. Recall that $(X,\Delta)$ is called a {\it klt Calabi-Yau pair} if all $a_i<1$ and $K_X+\Delta$ is numerically trivial. The cone conjecture holds for any klt Calabi-Yau surface \cite[Theorem 3.3]{Totaro}. Here are some interesting examples of klt Calabi-Yau pairs (besides trivial examples with $\Delta=0$).
	\begin{itemize}
		\item {\it Halphen surfaces} of index $m\geqslant 1$, i.e. smooth projective rational surfaces $X$ such that $|-mK_X|$ is a pencil without base points and fixed components. It can be shown that $X$ is a Halphen surface if and only if there exists an irreducible pencil of curves of degree $3m$ in $\PP^2$ with 9 base points of multiplicity $m$
		(maybe infinitely near) such that $X$ is the blow-up of these 9 base points and $|-mK_X|$ is the strict
		transform of this pencil. This is equivalent to the existence on $X$ of relatively minimal elliptic fibration  with no multiple fibre if $m=1$	and a unique multiple, of
		multiplicity $m$, otherwise \cite[Proposition 2.2]{Cantat-Dolgachev};
		\item {\it Coble surfaces}, i.e. smooth projective rational surfaces $X$ such that $|-K_X|=\varnothing$, but $|-2K_X|$ consists of a single irreducible curve $C$, and $K_X^2=-1$. The blowing down of a $(-1)$-curve on a Coble surfaces gives a Halphen surface of index 2. Conversely, the blow-up of a singular point of an irreducible non-multiple fiber of a Halphen surface of index 2 is a Coble surface \cite[Proposition 3.1]{Cantat-Dolgachev};
	\end{itemize}
	Although Halphen and Coble surfaces are rational by definition, they are still relevant for present discussion. Indeed, one has a representation $\Aut(X)\to\OO(\NS(X),q)$, where $q$ is the intersection form on $\NS(X)\cong\Pic(X)$ of signature $(1,\rho(X)-1)$ by Hodge index Theorem. The Kawamata-Morrison conjecture for $\Aut(X)$ holds by \cite{Totaro}.
	
	Let $X$ be a Halphen surface. By \cite[Remark 2.11]{Cantat-Dolgachev} there exists a homomorphism 
	\[
	\rho:\ \Aut(X)\to\PGL_2(\CC)
	\]
	with finite image such that $\ker\rho$ is an extension of a free abelian group of rank $\leqslant 8$ by a cyclic group of order dividing 24. In particular $\Aut(X)$ is finitely generated and satisfies Tits' alternative.
	
	Automorphism groups of Coble surfaces are also quite interesting. Automorphism group of a general Coble surface is isomorphic to the automorphism group of a general Enriques surface, and the latter are known to have a rich structure. For example, in \cite[Exemples 3.8]{Deserti} the authors gives an example of a Coble surface whose automorphism groups contains $(\ZZ^8)^{*10}$.
\end{ex}

\begin{ex}[\bf Wehler varieties]
	Another example of Calabi-Yau manifolds for which the Kawamata-Morrison conjecture holds is given by so-called {\it Wehler varieties}. These are smooth hypersurface $X$ of degree $(2,2,\ldots,2)$ in $(\PP^1)^{N+1}$. The $N+1$ projections $\pi_k: X\to(\PP^1)^N$ which are obtained by forgetting one coordinate are ramified double coverings. So, there are birational transformations $\iota_k: X\dashrightarrow X$ that permute the two points in the fibers of $\pi_k$. This gives a homomorphism from the universal Coxeter group $\UC(N+1)$ of rank $N+1$ to $\Bir(X)$:
	\[
	\Psi: \UC(N+1)=\underbrace{\ZZ/2*\ldots*\ZZ/2}_\text{$N+1$ times}\to\Bir(X).
	\]
	In \cite{Cantat-Oguiso} it was shown that for $N\geqslant 3$ and generic $X$ one has $\Aut(X)=\id$, and $\Psi$ is in fact an isomorphism (in particular, $\Bir(X)$ is finitely presented and satisfies Tits' alternative). Recall that given a Coxeter system $(W,\{s_j\}_{j=1}^N)$ one can construct an $N$-dimensional vector space $V_{N}$ of its geometric representation, with a quadratic form $b_{N}$ preserved by $W$. The group $W$ also preserves the so-called Tits cone $\Tits$, that contains an explicit subcone $\DTits$ which is a fundamental domain for the action of $W$ on $\Tits$. It then turns out that the action of $\Bir(X)$ on the Neron-Severi group $\NS(X)$ is conjugate to the	geometric representation $V_{N+1}$ of $\UC(N+1)$. Moreover, the form $b_{N+1}$ has signature $(1,N)$ and there exists a linear isomorphism $V_{N+1}\to\NS(X)_\RR$ such that the fundamental domain of $\UC(N+1)$ is mapped onto $\overline{\Amp}(X)$ and the Tits cone is mapped onto $\overline{\Mov}^e(X)$. So, our method here applicable only post factum, as an explicit description of $\Bir(X)$ is already available. However it would be interesting to know if there is an apriori explanation\footnote{Note that one of the things to explain here is the geometric meaning of the form $b_{N+1}$ for $N\geqslant 3$.} of any finiteness properties for $\Bir(X)$ in spirit of the following conditional statement (which follows from Proposition \ref{prop: kernel is finite} and Theorem \ref{thm: main technical}):
	
	\begin{conditional}\label{thm: general CY}
		Let $M$ be a projective Calabi-Yau manifold such that there exists a hyperbolic $\Aut(M)$-invariant (resp. $\Bir(M)$-invariant) quadratic form $q$ on $\NS(M)$. Assume that the Kawamata-Morrison cone conjecture (resp. its birational version) is true for $M$. Then $\Aut(M)$ (resp. $\Bir(M)$) has all the properties listed in Theorems A, B and C.
	\end{conditional}

\end{ex}

\hfill

{\bf Acknowledgements:} Authors thank Misha Verbitsky for discussions, indicating idea of the proof of the Kawamata-Morrison conjecture in $b_2=5$ case. Both authors are Young Russian Mathematics award winners and would like to thank its sponsors and jury.

\def\bibindent{2.5em}

\hfill

{\small
\noindent
{\bf Nikon Kurnosov}\\
Department of Mathematics,\\
University of Georgia,\\
Athens, GA, USA, 30602\\
also:
Laboratory of Algebraic Geometry,\\
National Research University HSE,\\
Department of Mathematics,\\
6 Usacheva Str., 119048, Moscow, Russia\\
{\tt nikon.kurnosov@gmail.com}

\hfill

\noindent {\bf Egor Yasinsky}\\
Universit\"{a}t Basel \\
Departement Mathematik und Informatik\\
Spiegelgasse 1, 
4051 Basel \\
{\tt  yasinskyegor@gmail.com}.


\begin{thebibliography}{99\kern\bibindent}
	\bibitem[Apa00]{Apanasov} B.~N.~Apanasov: {\it Conformal Geometry of Discrete Groups and Manifolds}, Degruyter Expositions in Mathematics (Book 32), 2000
    \bibitem[AV14]{KM3} E. Amerik, M. Verbitsky, {\it Rational Curves on Hyperk\"ahler Manifolds},
Int. Math. Res. Not., 2015, 23, pp. 13009--13045.
	\bibitem[AV15]{KM4} E. Amerik, M. Verbitsky, {\it Teichmuller space for hyperkahler and symplectic
structures}, J. Geom. Phys. 97 (2015), pp. 44--50.
	\bibitem[AV17]{KM} E. Amerik, M. Verbitsky, {\it Morrison-Kawamata cone conjecture for hyperk\"ahler manifolds}, Annales scientifiques de l'\'Ecole normale sup\'erieure, 50, 4 (2017), pp. 973--993.
	\bibitem[AV18]{KM2} E. Amerik, M. Verbitsky, {\it Collections of parabolic orbits in homogeneous spaces, homogeneous dynamics and hyperkahler geometry} to appear in International
	Mathematics Research Notices, \href{https://arxiv.org/abs/1604.03927}{https://arxiv.org/abs/1604.03927}.
	
	\bibitem[Bea83]{Bea} A. Beauville, Variete K\"ahleriennes dont la premiere  classe de chern est nulle, J. Diff.
	Deom. 18 (1983), pp. 755--782.
	\bibitem[Ben17]{Benzerga} M. Benzerga, {\it Finiteness of real structures on KLT Calabi-Yau regular smooth pairs of dimension 2},
	\href{https://arxiv.org/abs/1702.08808}{https://arxiv.org/abs/1702.08808}

	\bibitem[Bog74]{B}  F. Bogomolov, {\em On the decomposition of 
	K\"ahler manifolds with trivial canonical class}, Math. USSR-Sb. {\bf 22} (1974), pp. 580 - 583.
	
	\bibitem[BH99]{BH} Martin R. Bridson, Andr\'{e} H\"{a}fliger, {\it Metric Spaces of Non-Positive Curvature}, Springer, 1999.
	
	\bibitem[BS12]{BS} S. Boissi\`ere, A. Sarti, {\it A note on automorphisms and birational transformations of holomorphic symplectic manifolds}, Proc. Amer. Math. Soc., 140(12): pp. 4053--4062, 2012.


	\bibitem[Bro82]{Brown} K. Brown, {\it Cohomology of groups}, Springer (1982).
	
	\bibitem[CD12]{Cantat-Dolgachev} S. Cantat, I. Dolgachev {\it Rational surfaces with a large group of automorphisms},  J. Amer. Math. Soc. 25 (2012), 863-905 	
	
	\bibitem[CF18]{Fu} A. Cattaneo, L. Fu {\it Finiteness of Klein actions and real structures on compact hyperhahler manifolds}, \href{https://arxiv.org/abs/1806.03864}{https://arxiv.org/abs/1806.03864}.
	
	
	\bibitem[CO12]{Cantat-Oguiso} S. Cantat, K. Oguiso, {\it Birational automorphism groups and the movable cone theorem for Calabi-Yau manifolds of Wehler type via universal Coxeter groups}, Amer. J. Math. 137 (2015) 1013-1044.
	
	\bibitem[Des10]{Deserti} J. D\'{e}serti, {\it Automorphismes d'entropie positive, le cas des surfaces rationnelles}
	\href{https://arxiv.org/abs/1005.2259}{https://arxiv.org/abs/1005.2259}
	    
	\bibitem[GHJ03]{GHJ} M. Gross, D. Huybrechts, and D. Joyce, {\it Calabi-Yau manifolds and related geometries}, Universitext, Springer-Verlag, Berlin, 2003.
	
	\bibitem[GP99]{Platonov} F. Grunewald, V. Platonov, {\it Rigidity results for groups with radical cohomology of finite groups and arithmeticity problems}, Duke Math. J., Volume 100, Number 2, 321-358 (1999).
	
	\bibitem[Gua01]{Guan} D. Guan, {\it On the Betti numbers of irreducible compact hyperk\"{a}hler manifolds of complex dimension four}, Math. Res. Lett., 8, 5-6, pp 663--669, 2001.
		
	
	
	
	\bibitem[Fuj87]{F} A. Fujiki, {\it On the de Rham cohomology group of a compact K\"ahler symplectic manifold}, In: Algebraic geometry, Sendai, 1985, Adv. Stud. Pure Math.
	10 (1987), pp. 105--165.
    
	
	\bibitem[Huy99]{H} D. Huybrechts,
	{\it Compact hyperk\"ahler manifolds:  basic results}, Invent. Math. 135 (1999), pp.	63--113. Erratum: Compact hyperk\"ahler manifolds: basic results, Invent. Math. 152 (2003), pp. 209--212.
    
    \bibitem[Kon99]{Kondo} S. Kondo, {\it The Maximum Order of Finite Groups of Automorphisms of K3 Surfaces},  American Journal of Mathematics Vol. 121, No. 6, pp. 1245-1252 (1999).
    
    \bibitem[Kur15]{Nikon} N. Kurnosov, {\it Boundness of $b_2$ for hyperk\"{a}hler manifolds with vanishing odd-Betti numbers}, \href{https://arxiv.org/pdf/1511.02838.pdf}{https://arxiv.org/pdf/1511.02838.pdf}.
    
    \bibitem[Loo14]{Loo} E. Looijenga, {\it Discrete automorphism groups of convex cones of finite type}, Compos. Math., 150 (11), 1939--1962, 2014.
    
    \bibitem[LP3]{Lazic} V. Lazi\'{c}, Th. Peternell, {\it On the cone conjecture for Calabi-Yau manifolds with
    Picard number two}, Math. Res. Lett. 20 (2013), no. 6, 1103–1113.
	
	\bibitem[Mar11]{M} E. Markman, {\it A survey of Torelli and monodromy results for holomorphic-symplectic varieties}, Proc. of the Conference Complex and Differential Geometry, Springer Proceedings in Mathematics, Volume 8, 2011, pp. 257-322.
	
	\bibitem[Nam85]{Namikawa} Y. Namikawa, {\it Periods of Enriques surfaces}, Math. Ann. 270 (1985), no. 2, pp. 201--222.
	
	\bibitem[Ogu06]{Og}K. Oguiso, {\it Tits alternative in hyperk\"ahler manifolds}, Math. Res. Lett.,13 (2006), 2, pp. 307--316.

	\bibitem[Ogu08]{Og1} K. Oguiso, {\it Bimeromorphic automorphism groups of non-projective hyperk\"ahler manifolds -- a note inspired by C. T. McMullen}, J. Diff. Geom., 78, 1 (2008), 163--191.
	
	\bibitem[Ogu12]{OgPic2} K. Oguiso, {\it Automorphism groups of Calabi-Yau manifolds of Picard number 2}, J. Algebraic Geom. 23 (2014), 775--795.
	
	
	\bibitem[Pop11]{Popov} V. L. Popov, On the Makar-Limanov, Derksen invariants, and finite automorphism groups of algebraic varieties, Affine algebraic geometry: the Russell Festschrift, CRM Proceedings and Lecture Notes, 54, Amer. Math. Soc., 2011, 289--311.
	
	Amer. J. Math., 138(2) (2016), pp. 403--418.
	
	\bibitem[PS16]{PS-Jordan} Yu. Prokhorov and C. Shramov, {\it Jordan property for  groups of birational selfmaps}, 
	Compositio Mathematica, Volume 150, Issue 12, (2014), pp. 2054--2072.
    
	\bibitem[PS17]{PS-Analytic} Yu. Prokhorov and C. Shramov, {\it Automorphism groups of compact complex surfaces}, math.AG/1708.03566.
	
	\bibitem[Rat06]{Rat} J. G. Ratcliffe: {\it Foundations of hyperbolic manifolds}, volume 149 de Graduate Texts in
	Mathematics, 2006.
	
	\bibitem[Rua01]{Ruane} K. Ruane, {\it Dynamics of the action of a CAT(0) group on the boundary}, Geom. Dedicata	84 (2001), no. 1-3, p. 81--99.
	
	\bibitem[Saw15]{Sawon} J. Sawon, {A bound on the second Betti number of hyperk\"{a}hler manifolds of complex dimension six}, \href{https://arxiv.org/pdf/1511.09105.pdf}{https://arxiv.org/pdf/1511.09105.pdf}
	
	\bibitem[Ser07]{Serre} J.-P. Serre, {\it Bounds for the orders of the finite subgroups of G(k)}, in Group representation theory, pp. 405--450, EPFL Press, Lausanne, 2007.
	
	\bibitem[Ste85]{Sterk} H. Sterk, {\it Finiteness results for algebraic K3 surfaces},	Math. Z. 189 (1985), pp. 507--513.
	
	
	\bibitem[Tot10]{Totaro} B. Totaro, {\it The cone conjecture for Calabi-Yau pairs in dimension 2}, Duke Math. J. 154 (2010), no. 2, pp. 241--263.
	
	\bibitem[Tot09]{TotaroArxiv} B. Totaro, {\it The cone conjecture for Calabi-Yau pairs in dimension 2}, arXiv:0901.3361
	
	\bibitem[Tot12]{TotaroHyperbolic} B. Totaro, {\it Algebraic surfaces and hyperbolic geometry}, In Current developments in algebraic geometry, volume 59 de Math. Sci. Res. Inst. Publ., pages 405--426. Cambridge Univ. Press, Cambridge, 2012.
	
	\bibitem[Ver13]{V} M. Verbitsky, {\it Mapping class group and a global Torelli theorem for hyperk\"ahler manifolds},     Duke Math. J., 162, Number 15 (2013), pp. 2929--2986.
	
		
	\bibitem[Wol11]{Wolf} J. A. Wolf, {\it Spaces of constant curvature}, AMS, 2011.
	
\end{thebibliography}
\end{document}